\crefname{hypothesis}{Hypothesis}{Hypotheses}
\title{A Noise-Tolerant Quasi-Newton Algorithm for Unconstrained Optimization}
\author{       
        Hao-Jun Michael Shi\thanks{Department of Industrial Engineering and Management Sciences, Northwestern University, Evanston, IL, USA. \email{hjmshi@u.northwestern.edu}.
         This author was supported by a grant from Facebook, and by National Science Foundation grant DMS-1620022.}
       \and  
       Yuchen Xie\thanks{Department of Industrial Engineering and Management Sciences, Northwestern University, Evanston, IL, USA. \email{ycxie@u.northwestern.edu}. 
         This author was supported by the Office of Naval Research grant N00014-14-1-0313 P00003.}
       \and   
       Richard Byrd\thanks{Department of Computer Science, University of Colorado,
        Boulder, CO, USA. \email{richard@cs.colorado.edu}.  This author was supported by National Science Foundation grant DMS-1620070.}
        \and
       Jorge Nocedal\thanks{Department of Industrial Engineering and Management Sciences, Northwestern University, 
       Evanston, IL, USA. \email{j-nocedal@northwestern.edu}. This author was supported by the Office of Naval Research grant N00014-14-1-0313 P00003, and by National Science Foundation grant DMS-1620022.}
      }
\newcommand*{\addFileDependency}[1]{
  \typeout{(#1)}
  \@addtofilelist{#1}
  \IfFileExists{#1}{}{\typeout{No file #1.}}
}
\algnewcommand\algorithmicinput{\textbf{Input:}}
\algnewcommand\Input{\item[\algorithmicinput]}
\algnewcommand\algorithmicoutput{\textbf{Output:}}
\algnewcommand\Output{\item[\algorithmicoutput]}
\DeclareMathOperator{\tr}{tr}
\begin{document}

\maketitle

\begin{abstract}
    This paper describes an extension of the BFGS and L-BFGS methods for the minimization of a nonlinear function subject to errors. This work is motivated by applications that contain computational noise, employ low-precision arithmetic, or are subject to statistical noise. The classical BFGS and L-BFGS methods can fail in such circumstances because the updating procedure can be corrupted and the line search can behave erratically. The proposed method addresses these difficulties and ensures that the BFGS update is stable by employing a lengthening procedure that spaces out the points at which gradient differences are collected. A new line search, designed to tolerate errors, guarantees that the Armijo-Wolfe conditions are satisfied under most reasonable conditions, and works in conjunction with the lengthening procedure. The proposed methods are shown to enjoy convergence guarantees for strongly convex functions. Detailed implementations of the methods are presented, together with encouraging numerical results. 
\end{abstract}

\begin{keywords}
  unconstrained optimization, quasi-Newton method, stochastic optimization, noisy optimization, derivative-free optimization, nonlinear optimization
\end{keywords}

\begin{AMS}
  90C30, 90C53, 90C56
\end{AMS}

\section{Introduction}

Quasi-Newton methods, such as BFGS and L-BFGS, are used widely in practice because they require only first-order information and are yet able to construct useful quadratic models that make them faster and easier to use than the classical gradient method. However, in the presence of errors in the function and gradient evaluations, these methods may behave erratically. In this paper, we show how to design practical noise-tolerant versions of BFGS and L-BFGS  that  retain the robustness of their classical counterparts. The main challenge is to ensure that the updating process and the line search are not dominated by noise.  

This paper builds upon the theoretical results of Xie et al. \cite{xie2020analysis} who show that by incorporating a {lengthening procedure}, the BFGS method enjoys global convergence guarantees to a neighborhood of the solution for strongly convex functions. However, the algorithm proposed in \cite{xie2020analysis} is not practical as it requires knowledge of the strong convexity parameter $m$ of the objective function, which is normally not known. An overestimate of $m$ may lead to an unstable iteration, whereas an underestimate can  slow down convergence. The  quasi-Newton algorithms proposed in this paper compute the lengthening parameter adaptively without the need for exogenous function information; they are designed for solving general nonlinear optimization problems and are supported by a convergence analysis for strongly convex objectives. A distinctive feature of our approach is the use of a new line search procedure that works in conjunction with the lengthening technique introduced in this paper.

The problem under consideration is
\begin{equation}
    \label{eq:obj func}
    \min_{x \in \mathbb{R}^d}~\phi(x),
\end{equation}
where $\phi: \mathbb{R}^d \rightarrow \mathbb{R}$ is a smooth function. This minimization must be performed while observing only inaccurate function and gradient information, i.e., by observing
\begin{equation}
    \label{eq:noisy eval}
    f(x) = \phi(x) + \epsilon(x), \qquad g(x) = \nabla \phi(x) + e(x),
\end{equation}
where the scalar $\epsilon$ and the vector $e$ model the errors. We will consider the setting where the errors are bounded and the bounds are either known or estimated through an auxiliary procedure, such as \texttt{ECNoise} or pointwise sample variance estimation \cite{more2011estimating}. Specifically, we assume $|\epsilon(x)| \leq \epsilon_f$ and $\|e(x)\|_2 \leq \epsilon_g$ for all $x \in \mathbb{R}^d$, and that the algorithm has access to $\epsilon_f$ and $\epsilon_g$.

Problems of this type arise in many practical applications, including when the noise is computational or adversarial.  For example, in PDE-constrained optimization, the objective function often contains computational noise created by an inexact linear system solver  \cite{more2011estimating}, adaptive grids  \cite{petsc}, or other internal computations. In those applications, the optimization method may not be able to control the size of the errors. In other cases, errors are due to stochastic noise, which can be caused, for example, by an intermediate Monte Carlo simulation \cite{caflisch1998monte}. In these cases, errors may be controllable via Monte Carlo sampling. Error in the gradient can also be inherited from noise in the function within derivative-free optimization while employing gradient approximations based on finite-differencing, interpolation, or smoothing \cite{berahas2019linear,berahas2019theoretical,GillMurrSaunWrig83,GillMurrWrig81,more2012estimating,nesterov2017random}. In this case, the gradient errors can be controlled by the choice of the finite-difference interval, but can only be diminished to a certain extent under the presence of function noise. We note, however, that there are applications where noise is not bounded or where the bounds $\epsilon_f, \epsilon_g$ depend on $x$, in which case the methods proposed here cannot be directly applied.

The fact that the BFGS and L-BFGS methods can be unstable in the presence of noise  is due to the nature of the BFGS updating procedure. One simple way to illustrate this is by recalling that the Hessian approximation is updated based on  observed gradient differences:
\[   
   g(x + p)- g(x)=   \nabla \phi (x + p) + e(x+p)- \nabla \phi(x) - e(x), \quad p \in \mathbb{R}^d.
\]
If $\|p\|$ is very small, the gradients of $\phi$ could cancel out leaving only noise differences. Thus, the standard BFGS method may falter even 
before the iterates approach the region where noise dominates. Although one could argue that the situation just described is unlikely in practice,  it shows that convergence guarantees cannot be established in this case. 

To provide more concrete numerical evidence for the need to bolster the BFGS method, we illustrate in  
Figure~\ref{fig:BFGS on noisy ARWHEAD} 
the solution of the \texttt{ARWHEAD} problem \cite{gould2015cutest} in which independent random noise uniformly distributed on $[-10^{-3} , 10^{-3}]$ is introduced to each component of the gradient.  
One can observe a very large increase in the condition number of the BFGS matrix that is unseen when noise is removed. This shows that the Hessian approximation is corrupted, and an examination of the run indicates that the  line search gives rise to tiny steps once this has occurred. The \texttt{ARWHEAD} problem is chosen because it is easily solved yet clearly illustrates the instability of the BFGS matrix under the presence of noisy updates; we revisit this example in \S \ref{onlyg}.

\begin{figure}[htp]
    \centering
    \includegraphics[width=0.45\linewidth]{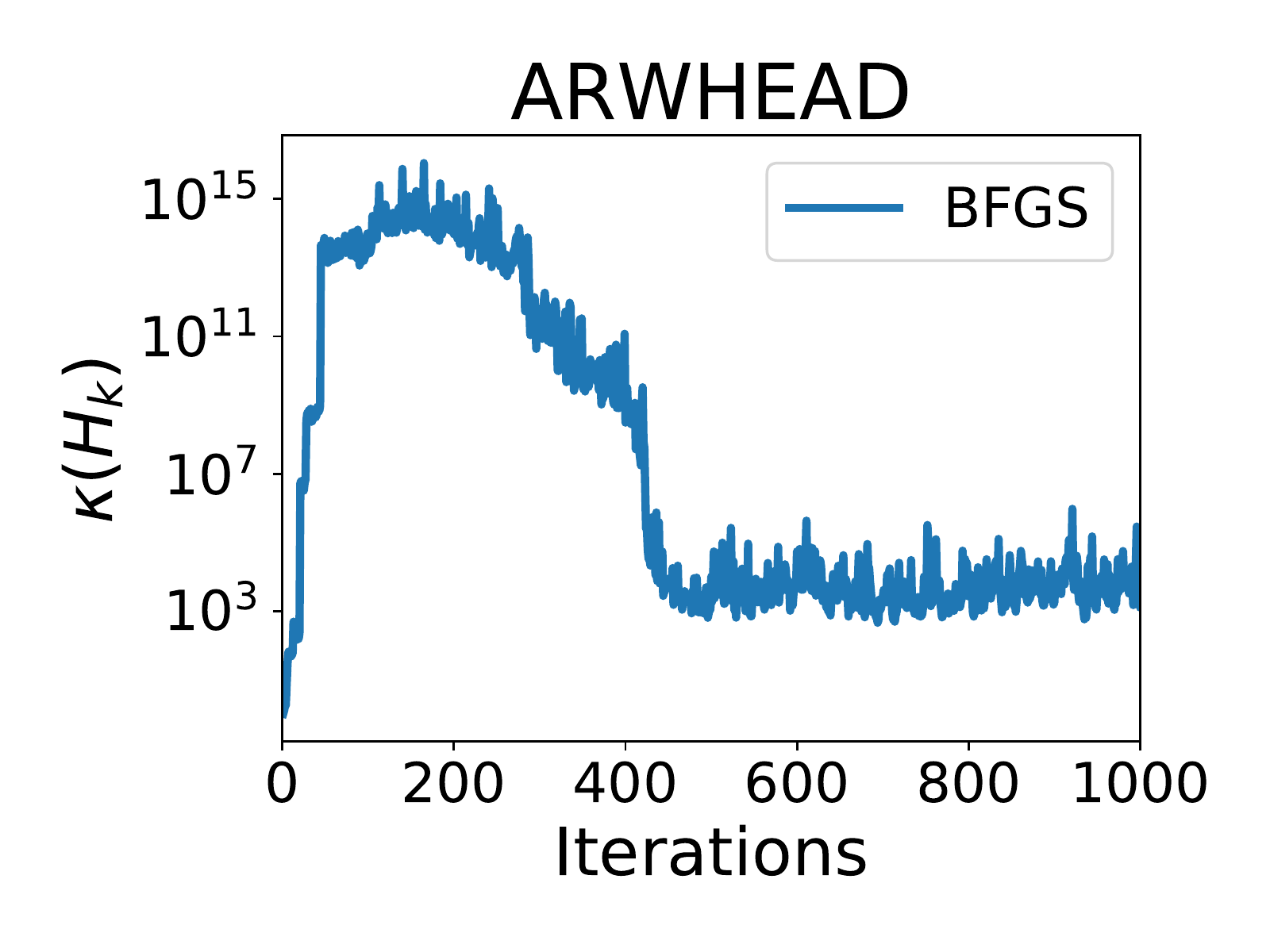}
    \caption{The condition number of the BFGS matrix $\kappa(H_k)$ against the number of iterations on the ARWHEAD problem with added noise.
    }
    \label{fig:BFGS on noisy ARWHEAD}
\end{figure}

The literature of the BFGS method with inaccurate gradients includes the implicit filtering method of Kelley et al.~\cite{choi2000superlinear, kelley2011implicit}, which assumes that noise can be diminished at will at any iteration.  Dennis and Walker \cite{DennWalker} and Ypma \cite{ypma} study bounded deterioration properties and local convergence of quasi-Newton methods with errors, when started near the solution with a Hessian approximation that is close to the exact Hessian.  
Barton \cite{barton1992computing} and Berahas, et al. \cite{berahas2019derivative} propose implementations of the BFGS method and L-BFGS method in which gradients are computed by an appropriate finite differencing technique, assuming that the noise level in the function evaluation is known. 
There has recently been some interest in designing quasi-Newton methods for machine learning applications using stochastic approximations to the gradient \cite{berahas2016multi,bollapragada2018progressive,byrd2016stochastic,gower2016stochastic,moritz2016linearly,schraudolph2007stochastic}. These papers avoid potential difficulties with BFGS or L-BFGS updating by assuming that the quality of gradient differences is sufficiently controlled, and as a result, the analysis follows similar lines as for their classical counterparts.
The work that is most relevant to this paper is by Xie et al. \cite{xie2020analysis}, who introduce the lengthening technique and establish conditions under which a steplength satisfying the  Armijo-Wolfe line search conditions exists.

The contributions of this work are as follows: i) we propose practical extensions of the BFGS and L-BFGS methods for nonlinear optimization that are capable of dealing with noise by employing a new line search/lengthening technique
that stabilizes the quasi-Newton update. This strategy relies on the noise control condition \eqref{eq:noise} introduced in this paper; ii) we provide a convergence analysis for the proposed method for strongly convex objective functions based on the properties the noise control condition instead of assuming knowledge of the strong convexity parameter, as is done in \cite{xie2020analysis}; iii) we describe implementations of the methods in full detail, and present extensive numerical results that suggest that our approach is robust for certain classes of noisy optimization problems.

The paper is organized into 6 sections. In section 2, we describe the proposed algorithms, and in section 3 we establish convergence for strongly convex objectives. In section 4, we describe practical implementations of the noise-tolerant BFGS and L-BFGS methods. In section 5, we present the results of experiments on noisy synthetic examples. Lastly, we give our final remarks in section 6. 

\section{The Algorithm}
\label{sec:algorithm}

The BFGS and L-BFGS methods for minimizing $\phi$, when only noisy observations \eqref{eq:noisy eval} of the function and gradient are available, have the form
\begin{equation}   \label{bfgs}
    x_{k + 1} = x_k - \alpha_k H_k g(x_k),
\end{equation}
where  $H_k \succ 0$ is an approximation to the inverse Hessian, $\nabla^2 \phi(x_k)^{-1},$ and the steplength $\alpha_k$ is computed by a line search.
Given  a curvature pair
\begin{align}
    (s_k, y_k)  &= (x_{k + 1} - x_k, g(x_{k + 1}) - g(x_k)) \\
     & = (\alpha_k p_k, g(x_k + \alpha_k p_k) - g(x_k)), \label{orange}
\end{align}
where $p_k = - H_k g(x_k)$, the BFGS formula updates $H_k$ as follows:
\begin{equation}
\label{eq:bfgs update}
    H_{k + 1} = (I - \rho_k s_k y_k^T) H_k (I - \rho_k y_k s_k^T) + \rho_k s_k s_k^T , \qquad\mbox{where} \  \rho_k = {1}/{y_k^Ts_k}.
\end{equation}
The L-BFGS method stores the past $t$ curvature pairs and computes the matrix-vector product $H_k g_k$ via a two-loop recursion, with memory and computational complexity that is linear with respect to the problem dimension $d$ \cite{LiuNocedal89}. For both methods, a line search ensures that $y_k^Ts_k >0$, guaranteeing that the update \eqref{eq:bfgs update} is well defined.

 As discussed in the previous section,  the difference in gradients $g(x_k + \alpha_k p_k) - g(x_k)$ may be dominated by noise, rendering the curvature information inaccurate and potentially malign.
 To safeguard against this, Xie et al. \cite{xie2020analysis} introduced a {lengthening operation}  that ensures that meaningful curvature information is being collected. Specifically, they redefine the curvature pair by \begin{equation}   \label{pairs}
    (s_k, y_k) = (\beta_k p_k, g(x_k + \beta_k p_k) - g(x_k)),
\end{equation}
where $\beta_k \geq \alpha_k$ is a sufficiently large lengthening parameter. 
The theoretical analysis in \cite{xie2020analysis} 
states that setting $\beta_k = O(\epsilon_g/m \|p_k\|)$ ensures linear convergence to a neighborhood of the solution for strongly convex problems, where $m$ is the strong convexity parameter and $\epsilon_g$ is an upper bound on the norm of the gradient noise, i.e.,
\begin{equation}  \label{upper}
    \|g(x) - \nabla \phi(x)\|_2 = \|e(x)\|_2 \leq \epsilon_g \quad\quad \forall x \in \mathbb{R}^d.
\end{equation} 
However, the analysis in \cite{xie2020analysis} does not
directly yield an implementable algorithm, as the parameter $m$ is generally not known in practice.  Furthermore, \cite{xie2020analysis}  does not propose a practical line search procedure for finding a steplength that satisfies the Armijo-Wolfe conditions---although it does establish the existence of such a steplength.

 We now propose a  rule for computing $\beta_k$ that does not require knowledge of $m$, as well as a practical line search procedure. In our approach, we enforce the following three conditions on the steplength $\alpha_k$ and the lengthening parameter $\beta_k$: 
\begin{align}
    f(x_k + \alpha_k p_k) & \leq f(x_k) + c_1 \alpha_k g(x_k)^T p_k & & \text{(Armijo condition)} \label{eq:armijo}\\
    g(x_k + \alpha_k p_k)^T p_k & \geq c_2 g(x_k)^T p_k & & \text{(Wolfe condition)}\label{eq:wolfe}\\
    (g(x_k + \beta_k p_k) - g(x_k))^T p_k & \geq 2 (1 + c_3) \epsilon_g \|p_k\| & & \text{(noise control)} \label{eq:noise}
\end{align}
where $0 < c_1 < c_2 < 1$ and $c_3 > 0$. Here and throughout the paper, $\| \cdot \|$ denotes the Euclidean norm.
The Armijo-Wolfe conditions \eqref{eq:armijo}--\eqref{eq:wolfe} ensure that the steplength $\alpha_k$ that is taken by the algorithm is not too short and yields sufficient decrease on the (noisy) objective function, while the noise control condition \eqref{eq:noise} on $\beta_k$ is designed so  that the difference in the observed directional derivatives is sufficiently large so as not to be dominated by noise. 
A motivation for \eqref{eq:noise} and a discussion of its salient properties are given below.

To satisfy the three conditions above one could find a steplength $\alpha_k$ that satisfies \eqref{eq:armijo}-\eqref{eq:wolfe}, and if \eqref{eq:noise} holds for $\beta_k = \alpha_k$, then set $\beta_k \leftarrow \alpha_k$. Otherwise, one can search for $\beta_k > \alpha_k$ to satisfy \eqref{eq:noise}. In practice, we employ a different strategy described in section~\ref{sec: two-phase} to achieve similar objectives.
 
The outline of the proposed method is given in Algorithm \ref{alg:noise-tolerant L-BFGS}.
 \newpage
\begin{algorithm}
\caption{Outline of Noise-Tolerant BFGS and L-BFGS Methods}
\label{alg:noise-tolerant L-BFGS}
\begin{algorithmic}[1]
\Input function $f(\cdot)$ and gradient $g(\cdot)$; noise level in gradient $\epsilon_g$; initial iterate $x_0$ and Hessian approximation $H_0 \succ 0$;
\For{$k = 0, 1, 2, ...$}
\State Compute $p_k = - H_k g(x_k)$ by matrix-vector multiplication (BFGS) or two-loop recursion  \cite{mybook} (L-BFGS);
\State Perform a line search to obtain $\alpha_k$ satisfying \eqref{eq:armijo} and \eqref{eq:wolfe}; if the line search fails, then compute $\alpha_k$ such that $f(x_k + \alpha_k p_k) \leq f(x_k)$;
\State Take the step $x_{k+1} = x_k + \alpha_k p_k$;
\State Perform a lengthening procedure to obtain $\beta_k$ satisfying \eqref{eq:noise};
\State Compute the curvature pair $(s_k, y_k)$ using $\beta_k$, as in \eqref{pairs};
\State Update the Hessian approximation $H_k$ by \eqref{eq:bfgs update} (BFGS) or update set $\{(s_i, y_i)\}$ of curvature pairs (L-BFGS);
\EndFor
\end{algorithmic}
\end{algorithm}

The Armijo-Wolfe line search is guaranteed to find a steplength $\alpha_k$ that satisfies conditions \eqref{eq:armijo}-\eqref{eq:wolfe} only when the gradient is sufficiently large relative to the noise level; otherwise $p_k$ is not guaranteed to be a descent direction. To handle this case, a line search failure occurs when a maximum number of trial points is computed without satisfying
\eqref{eq:armijo} and \eqref{eq:wolfe}. The algorithm requires an estimate of the noise level $\epsilon_g$, which can be obtained through sampling or through the Hamming procedure described in \cite{more2012estimating}.
The main remaining ingredient in this algorithm is a description of a procedure for computing $\alpha_k$ and $\beta_k$ in step~3 and 5. This will be discussed in \S\ref{sec:implement}.


\subsection{Motivation of the Noise Control Condition (2.9)} \label{motivation}
 We first note that the Wolfe condition \eqref{eq:wolfe} alone does not ensure that the BFGS update is productive in the noisy setting. Even though \eqref{eq:wolfe} guarantees that
\begin{equation*}
    y_k^T s_k \geq -(1 - c_2)  g(x_k)^Ts_k >0,
\end{equation*}
and this is sufficient for maintaining the positive definiteness of the BFGS matrix, this does not mean that $y_k$ properly reflects the curvature of the true function, namely $\nabla \phi(x_k + \alpha_k p_k) - \nabla \phi(x_k)$, because $y_k$ may be contaminated by noise, as discussed before.

Let us, in contrast, observe the effect of the noise control condition \eqref{eq:noise}. We have
\begin{align*}
    (g(x_k & + \beta_k p_k) - g(x_k))^T p_k \\ 
    & = \left[(\nabla \phi(x_k + \beta_k p_k) - \nabla \phi(x_k)) + (e(x_k + \beta_k p_k) - e(x_k))\right]^T p_k \\
    & \leq (\nabla \phi(x_k + \beta_k p_k) - \nabla \phi(x_k))^T p_k + (\|e(x_k + \beta_k p_k)\| + \|e(x_k)\|) \|p_k\| \\
    & \leq (\nabla \phi(x_k + \beta_k p_k) - \nabla \phi(x_k))^T p_k + 2 \epsilon_g \|p_k\|,
\end{align*}
by \eqref{upper}.
Combining this with \eqref{eq:noise} we have
\begin{equation}
\label{eq:noise bound}
     (\nabla \phi(x_k + \beta_k p_k) - \nabla \phi(x_k))^T p_k \geq 2 c_3 \epsilon_g \|p_k\|,
\end{equation}
and thus the true difference in the directional derivative is sufficiently large relative to the gradient noise $\epsilon_g$.  If we define 
\begin{equation} \label{eq: ytilde}
 \tilde{y}_k = \nabla \phi(x_k + \beta_k p_k) - \nabla \phi(x_k),
 \end{equation}
 and recall from \eqref{pairs} that $s_k= \beta_k p_k$,
 we can write \eqref{eq:noise bound} as
\begin{equation*}
     \tilde{y}_k^Ts_k \geq 2 \beta_k c_3 \epsilon_g \|p_k\|.
\end{equation*}

We can also establish a relationship between the observed and true curvature along the step $s_k$. In particular, if $\beta_k > 0$ satisfies the noise control condition \eqref{eq:noise} and \eqref{upper} holds, then
\begin{equation*}
    \left|\frac{s_k^T \tilde{y}_k}{s_k^T y_k} - 1 \right| \leq \frac{\|s_k\| \|  \tilde{y}_k - y_k\|}{s_k^T y_k} \leq \frac{2\epsilon_g\|s_k\|}{s_k^T y_k} \leq \frac{1}{1 + c_3}
\end{equation*}
which implies that 
\begin{equation}\label{eq:curv rel}
  \left(1 + \frac{1}{1 + c_3} \right)^{-1} s_k^T \tilde y_k \leq s_k^T y_k \leq \left(1 - \frac{1}{1 + c_3} \right)^{-1} s_k^T \tilde y_k.
\end{equation}

This result shows that when condition \eqref{eq:noise} is satisfied, the noisy curvature pair $(s_k, y_k)$ is a good approximation to the true curvature pair $(s_k, \tilde{y}_k)$, with the parameter $c_3$ trading off the quality of this approximation
with the locality of the curvature information being collected (in the sense that $\beta_k$ may be excessively large if $c_3$ is chosen to be large). 

Note that we are guaranteed finite termination of the lengthening procedure if there exists a $\bar{\beta} > 0$ such that for all $\beta > \bar{\beta}$,
\begin{equation*}
    \nabla \phi(x_k + \beta p_k)^T p_k \geq \nabla \phi(x_k)^T p_k + 2 c_3 \epsilon_g \|p_k\|.
\end{equation*}
This is guaranteed if $\lim_{\beta \rightarrow \infty} \nabla \phi(x_k + \beta p_k)^T p_k = \infty$, which holds for strongly convex functions, as well as for many other (but not all) functions. 

Let us now verify that the noise control condition is compatible with the choice 
\begin{equation}   \label{order}
\beta= O(\epsilon_g / m \|p_k\|)
\end{equation}
stipulated by Xie et al. \cite{xie2020analysis} in their convergence analysis of the BFGS method with errors for strongly convex functions.
If $\phi$ is $m$-strongly convex, then 
\begin{equation*}
    \tilde{y}_k^T p_k = (\nabla \phi(x_k + \beta_k p_k) - \nabla \phi(x_k))^T p_k \geq m \beta_k \| p_k \|^2.
\end{equation*}
Therefore, by our assumption, we have
\begin{equation*}
    y_k^T p_k \geq \tilde{y}_k^T p_k - 2\epsilon_g \|p_k\| \geq (m \beta_k \|p_k\| - 2 \epsilon_g)\|p_k\|.
\end{equation*}
Therefore it suffices to ensure that
\begin{equation} \label{initbeta}
    m \beta_k \|p_k\| - 2 \epsilon_g \geq 2(1+c_3) \epsilon_g,~\text{i.e.}~\beta_k   \geq \frac{2 (2 + c_3) \epsilon_g}{m \|p_k\|} ,
\end{equation}
to satisfy \eqref{eq:noise}. 



\medskip\textit{Remark 1.} 
It is natural to ask whether there are less expensive alternatives to the lengthening strategy mentioned above. The noise control condition \eqref{eq:noise} offers the possibility of skipping the BFGS update when it is not satisfied. 
We describe this approach and test it in \S\ref{numerical}. Another possibility is to use Powell damping \cite[chapter 18]{mybook}, but we consider this to be somewhat dangerous, as it would involve repeatedly introducing spurious information in the Hessian approximation without much safeguard. 

\section{Convergence Analysis}
Xie et al. \cite{xie2020analysis} established convergence results for the BFGS method using a lengthening strategy designed to cope with errors in the function and gradient.  They assume the lengthening parameter satisfies $\beta_k \|p_k\| \geq 2 \epsilon_g/m$. This leaves open the question of how to estimate the strong convexity parameter $m$ in practice so that the convergence results in \cite{xie2020analysis} still hold.

In this paper, we bypass this thorny issue and propose the lengthening strategy based on the noise control condition \eqref{eq:noise}, which employs an estimate of the noise level of the gradient $\epsilon_g$, but does not  require knowledge of $m$.
We now establish conditions under which Algorithm~\ref{alg:noise-tolerant L-BFGS} is linearly convergent to a neighborhood of the solution determined by the noise level. We make the following assumption about the underlying function $\phi$, which is standard in the analysis of quasi-Newton methods.

\begin{assumption}\label{as:mMsmooth}
The function $\phi$ is $m$-strongly convex and has $M$-Lipschitz continuous gradients, i.e., there exist constants $0 < m \leq M$ such that
\begin{equation*}
    m \| x - y \|^2 \leq  \left[\nabla \phi(x) - \nabla \phi(y)\right]^T (x - y) \leq M \|x - y\|^2,~~ \forall x, y \in \mathbb{R}^d .
\end{equation*}
\end{assumption}
In addition, we assume that the errors in the gradient and objective function approximation are bounded.
\begin{assumption}\label{as:bdd}
There are constants {$\epsilon_g \geq 0$} 
and $\epsilon_f \geq 0$ such that
\begin{equation}
    \|\nabla \phi(x) - g(x) \| \leq \epsilon_g, ~ \forall x \in \mathbb{R}^d, \quad\mbox{and}
\end{equation}
\begin{equation}
    | \phi(x) - f(x) | \leq \epsilon_f, ~ \forall x \in \mathbb{R}^d.
\end{equation}
\end{assumption}

Byrd and Nocedal \cite{ByNoTool} showed that if all curvature pairs $(s_k,y_k)$ satisfy 
\begin{equation}   \label{eq:ratios}
    \frac{s_k^T y_k}{s_k^T s_k} \geq \widehat m, \quad \frac{y_k^T y_k}{s_k^T y_k} \leq \widehat M , \quad \forall k \in \mathbb{N},
\end{equation} 
 for some constants $0 < \widehat m \leq \widehat M$, then most of the iterates generated by the (classical) BFGS method are ``good iterates'' in the sense that the angle between the search direction and the steepest direction is bounded away from orthogonality. This fact is used in \cite{ByNoTool} to establish convergence of the BFGS algorithm with various types of line searches for strongly convex functions. 

The first step in our analysis consists of showing that bounds of the form \eqref{eq:ratios} are satisfied for both the BFGS and L-BFGS versions of our noise tolerant Algorithm~\ref{alg:noise-tolerant L-BFGS}, due to the role of the noise control condition \eqref{eq:noise}. 
For convenience, we summarize the notation introduced in the previous section:
\[
    s_k = \beta_k p_k, \quad y_k = g(x_k + s_k) - g(x_k), \quad
 \tilde{y}_k = \nabla \phi(x_k + s_k) - \nabla \phi(x_k),
 \]
and therefore the noise control condition can be written as
 \[
    s_k^T \left[g(x_k + s_k) - g(x_k)\right] \geq c \, \epsilon_g \|s_k\|,
\]
with $c=2(1+c_3)$.

\medskip
{\it Notation.} So far we let $H_k$ denote the BFGS approximation of the inverse Hessian. The classical analysis of the BFGS method analyzes, however, the direct Hessian approximation $B_k$ defined as $B_k^{-1}= H_k$ \cite{mybook}. Therefore, some of the results quoted from \cite{xie2020analysis},  are stated in terms of $B_k$.
\smallskip

\begin{lemma}\label{lem:noise_to_tool}
Suppose that Assumptions \ref{as:mMsmooth} and~\ref{as:bdd} hold and that 
$s_k \neq 0$ is chosen such that
\begin{equation}\label{eq:noise_cond_dd}
    s_k^T \left[g(x_k + s_k) - g(x_k)\right] \geq c \, \epsilon_g \|s_k\|,
\end{equation}
with $c > 2$ and $\epsilon_g > 0$. 
Then we have that
\begin{equation} \label{eq:goodb}
    \frac{s_k^T y_k}{s_k^T s_k} \geq \frac{c}{c+2} m, \qquad \frac{y_k^T y_k}{s_k^T y_k} \leq \frac{c}{c-2} M .
\end{equation}
\end{lemma}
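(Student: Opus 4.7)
The plan rests on three ingredients: the bounded-noise estimate $\|y_k - \tilde y_k\| \le 2\epsilon_g$ (from Assumption~\ref{as:bdd} applied at $x_k$ and $x_k + s_k$), the strong-convexity and cocoercivity bounds on the noiseless pair $(s_k, \tilde y_k)$ coming from Assumption~\ref{as:mMsmooth}, and the noise control hypothesis \eqref{eq:noise_cond_dd}. The first inequality of \eqref{eq:goodb} should follow from a short manipulation; the second requires a more subtle ``noisy cocoercivity'' argument in which the main task is to absorb an $O(\epsilon_g^2)$ remainder into the noise control bound.

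For the lower bound $s_k^T y_k / s_k^T s_k \ge cm/(c+2)$, I would start from strong convexity, which gives $s_k^T \tilde y_k \ge m\|s_k\|^2$. Since $|s_k^T(y_k - \tilde y_k)| \le 2\epsilon_g\|s_k\|$, we have $s_k^T y_k \ge m\|s_k\|^2 - 2\epsilon_g\|s_k\|$. The noise control condition rewrites as $2\epsilon_g\|s_k\| \le (2/c)\,s_k^T y_k$, so substituting and rearranging yields $(1 + 2/c)\,s_k^T y_k \ge m\|s_k\|^2$, i.e., the first bound in \eqref{eq:goodb}.

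For the upper bound $y_k^T y_k/s_k^T y_k \le cM/(c-2)$, I would expand
\[
\|y_k\|^2 = \|\tilde y_k\|^2 + 2\tilde y_k^T (y_k - \tilde y_k) + \|y_k - \tilde y_k\|^2,
\]
apply cocoercivity $\|\tilde y_k\|^2 \le M\tilde y_k^T s_k$, and use $\tilde y_k^T s_k = s_k^T y_k - s_k^T(y_k - \tilde y_k)$ to obtain
\[
\|y_k\|^2 \le M s_k^T y_k + (2\tilde y_k - M s_k)^T (y_k - \tilde y_k) + \|y_k - \tilde y_k\|^2.
\]
The key observation is that cocoercivity also controls the bracketed vector: $\|2\tilde y_k - M s_k\|^2 = 4\|\tilde y_k\|^2 - 4M \tilde y_k^T s_k + M^2\|s_k\|^2 \le M^2\|s_k\|^2$. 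Combined with $\|y_k - \tilde y_k\| \le 2\epsilon_g$, Cauchy--Schwarz then gives $\|y_k\|^2 \le M s_k^T y_k + 2M\epsilon_g\|s_k\| + 4\epsilon_g^2$.

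The remaining step, which I expect to be the main obstacle, is to collapse the two error terms into $(2M/(c-2))\,s_k^T y_k$. For this I would combine noise control with Lipschitz continuity: Cauchy--Schwarz and the triangle inequality yield $c\epsilon_g \le s_k^T y_k/\|s_k\| \le \|y_k\| \le \|\tilde y_k\| + 2\epsilon_g \le M\|s_k\| + 2\epsilon_g$, hence $\epsilon_g \le M\|s_k\|/(c-2)$. This lets me bound $4\epsilon_g^2 \le (4M/(c-2))\epsilon_g\|s_k\|$, so that $2M\epsilon_g\|s_k\| + 4\epsilon_g^2 \le (2Mc/(c-2))\,\epsilon_g\|s_k\|$; applying noise control $\epsilon_g\|s_k\| \le s_k^T y_k/c$ once more gives the bound $\le (2M/(c-2))\,s_k^T y_k$. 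Adding this to $M s_k^T y_k$ produces the desired inequality $\|y_k\|^2 \le (Mc/(c-2))\,s_k^T y_k$.
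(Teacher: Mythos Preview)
Your proposal is correct. The argument for the first inequality is essentially the same as the paper's: the paper records the two lower bounds $s_k^Ty_k/\|s_k\|^2 \ge c\epsilon_g/\|s_k\|$ and $s_k^Ty_k/\|s_k\|^2 \ge m - 2\epsilon_g/\|s_k\|$ and ``combines'' them, while you substitute the first into the second; algebraically these are the same step.

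For the second inequality you take a genuinely different route. The paper uses the two-sided inequality
\[
s_k^T\tilde y_k \ge \frac{mM}{m+M}\|s_k\|^2 + \frac{1}{m+M}\|\tilde y_k\|^2
\]
(Bertsekas, Prop.~6.1.9(b)), rewrites it as $\bigl\|\tilde y_k - \tfrac{M+m}{2}s_k\bigr\| \le \tfrac{M-m}{2}\|s_k\|$, perturbs to $y_k$, and then performs a case analysis on the sign of $2\epsilon_g - m\|s_k\|$ to reach the bound. Your argument instead uses only the one-sided cocoercivity $\|\tilde y_k\|^2 \le M\,s_k^T\tilde y_k$, applies it a second time to control $\|2\tilde y_k - Ms_k\|$, and absorbs the error terms directly, avoiding the case split entirely. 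Both proofs rely on the same auxiliary estimate $\|s_k\| \ge (c-2)\epsilon_g/M$. Your version is shorter and more elementary, and it shows in passing that the second inequality in \eqref{eq:goodb} does not actually need strong convexity: convexity plus $M$-Lipschitz gradients suffices.
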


\begin{proof}
Since $\| s_k \| >0$ we have that
\begin{equation*}
    \frac{s_k^T y_k}{s_k^T s_k} \geq c \frac{\epsilon_g}{\|s_k\|}>0 .
\end{equation*}
In addition, since $\|\tilde{y}_k - y_k\| \leq 2 \epsilon_g$ and by Assumption~\ref{as:mMsmooth} we have
\begin{align*}
    \frac{s_k^T y_k}{s_k^T s_k} \geq \frac{s_k^T \tilde{y}_k}{s_k^T s_k} - \frac{2\epsilon_g}{\|s_k\|} \geq m - \frac{2\epsilon_g}{\|s_k\|}.
\end{align*}
Combining these two inequalities, we obtain
\begin{equation*}
    \frac{s_k^Ty_k}{s_k^Ts_k} \geq \frac{c}{c+2} m ,
\end{equation*}
which proves the first inequality in \eqref{eq:goodb}.

For the second bound in \eqref{eq:goodb},  first note that 
$ \|  y_k\| \leq M \|s_k\| + \| \tilde y_k - y_k \| \leq M \| s_k \| + 2 \epsilon_g.$
Therefore,
\begin{equation}  \label{pounders}
    \|s_k\|\left(M \|s_k\| + 2\epsilon_g\right) \geq \|s_k\|\|y_k\| \geq s_k^T y_k \geq c \epsilon_g \|s_k\| ,
\end{equation}
which yields the following lower bound on $\|s_k\|$:
\begin{equation} \label{pounders imp}
    \|s_k\| \geq (c-2) \frac{\epsilon_g}{M} .
\end{equation}
Since $\phi$ is $m$-strongly convex with $M$-Lipschitz continuous gradients, by \cite[Proposition 6.1.9 (b)]{bertsekas2015convex} we have
\begin{equation*}
    (x - z)^T\left[\nabla \phi(x) - \nabla \phi(z) \right] \geq \frac{mM}{m+M} \|x-z\|^2 + \frac{1}{m+M} \|\nabla \phi(x) - \nabla \phi(z)\|^2, ~\forall x, z \in \mathbb{R}^d .
\end{equation*}
Setting $x \gets x_k + s_k, z \gets x_k$, and noticing that $x - z = s_k$, $\nabla \phi(x) - \nabla \phi(z) = \tilde{y}_k$, we have
\begin{equation*}
    s_k^T \tilde{y}_k \geq \frac{mM}{m+M} \|s_k\|^2 + \frac{1}{m+M} \|\tilde{y}_k\|^2.
\end{equation*}
By re-arranging the terms, we get
\begin{equation*}
    \|\tilde{y}_k\|^2 - (M + m) s_k^T \tilde{y}_k + \left(\frac{M+m}{2}\right)^2 \|s_k\|^2 \leq \left(\frac{M-m}{2}\right)^2 \|s_k\|^2,
\end{equation*}
which is equivalent to
\begin{equation*}
    \left \|\tilde{y}_k - \frac{M+m}{2} s_k\right\| \leq \frac{M-m}{2}\left\|  s_k\right\|.
\end{equation*}

Consequently
\begin{equation*}
    \left \|y_k - \frac{M+m}{2} s_k\right\|^2 \leq \left(\frac{M-m}{2}\left\|  s_k\right\| + 2\epsilon_g\right)^2 ,
\end{equation*}
i.e.,
\begin{align*}
    & \|y_k\|^2 - (M+m) s_k^T y_k + \left(\frac{M+m}{2}\right)^2 \|s_k\|^2 \\ \leq & \left(\frac{M-m}{2}\right)^2 \|s_k\|^2 + 2(M-m)\|s_k\|\epsilon_g + 4\epsilon_g^2 .
\end{align*}
Note that we have shown $s_k^T y_k > 0$, therefore, we can simplify the equation above to
\begin{equation}  \label{eq:inter}
    \frac{y_k^T y_k}{s_k^T y_k} \leq (M+m) + \frac{(2\epsilon_g + M\|s_k\|)(2\epsilon_g - m\|s_k\|)}{s_k^T y_k}.
\end{equation}
\textbf{Case 1}: if $2\epsilon_g - m \|s_k\| < 0$, 
then we have
\begin{equation*}
    \begin{aligned}
        \frac{y_k^T y_k}{s_k^T y_k} & \leq (M+m) - \frac{\|s_k\|(2\epsilon_g + M\|s_k\|)}{s_k^T y_k} \left(m- 2\frac{\epsilon_g}{\|s_k\|}\right) \\
    \end{aligned}
\end{equation*}
From \eqref{pounders} we know that
\begin{equation*}
    \|s_k\|(M\|s_k\| + 2\epsilon_g) \geq s_k^T y_k
\end{equation*}
therefore,
\begin{equation*}
    \begin{aligned}
        \frac{y_k^T y_k}{s_k^T y_k} & \leq M + 2\frac{\epsilon_g}{\|s_k\|}
    \end{aligned}
\end{equation*}
Combining this with the lower bound $\|s_k\| \geq (c-2) {\epsilon_g}/{M}$ given in \eqref{pounders imp},
we have
\begin{equation*}
    \frac{y_k^T y_k}{s_k^T y_k} \leq M + \frac{2\epsilon_g}{\|s_k\|} \leq M + \frac{2}{c-2} M = \frac{c}{c-2} M .
\end{equation*}
\textbf{Case 2}: if $2\epsilon_g - m \|s_k\| \geq 0$, then we have from \eqref{eq:inter} and \eqref{eq:noise_cond_dd}
\begin{equation*}
\begin{aligned}
    \frac{y_k^T y_k}{s_k^T y_k} & \leq (M+m) + \frac{(2\epsilon_g + M\|s_k\|)(2\epsilon_g - m\|s_k\|)}{c \epsilon_g \|s_k\|} \\
    & = (M + m) + \frac{1}{c}\left(2 + M \frac{\|s_k\|}{\epsilon_g}\right) \left(2 \frac{\epsilon_g}{\|s_k\|} - m\right).
\end{aligned}
\end{equation*}
The right hand side increases as $\|s_k\|/\epsilon_g$ decreases, hence setting $\|s_k\|$ to the lower bound given in \eqref{pounders imp}, we have
\begin{equation*}
\begin{aligned}
    \frac{y_k^T y_k}{s_k^T y_k} 
    & \leq (M + m) + \frac{1}{c}\left(2 + M \frac{\|s_k\|}{\epsilon_g}\right) \left(2 \frac{\epsilon_g}{\|s_k\|} - m\right) \\
    & \leq (M + m) + \frac{1}{c}\left(2 + M \frac{c-2}{M}\right) \left(2 \frac{M}{c-2} - m\right) \\
    & = \frac{c}{c-2} M .
\end{aligned}
\end{equation*}
 This proves the second inequality.
\end{proof}

As mentioned above, if we set $c = 2(1 + c_3)$ in \eqref{eq:noise_cond_dd}, we obtain the  noise control condition \eqref{eq:noise}. Therefore, we have the following guarantee on the curvature pairs generated by Algorithm \ref{alg:noise-tolerant L-BFGS}:
\begin{equation}\label{eq:tool_cond}
    \frac{s_k^T y_k}{s_k^T s_k} \geq \widehat m = \frac{1+{c_3}}{2+{c_3}} m, \quad \frac{y_k^T y_k}{s_k^T y_k} \leq \widehat M = \left(1+\frac{1}{{c_3}}\right) M, ~~ k = 0, 1, 2, ...
\end{equation}

To continue using the results in  \cite{ByNoTool} we define, for any $\gamma > 0$, the index of ``good iterates'' $J(\gamma)$ as
\begin{equation}\label{eq:J_def}
    J(\gamma) = \{k \in \mathbb{N} | \cos \theta_k \geq \gamma\},
\end{equation}
where $\cos \theta_k$ is the angle between $p_k = -H_k g_k$ and $-g_k$. 
The following lemma uses the bounds (\ref{eq:tool_cond}) to show that that for some values $\gamma$, the set $J(\gamma)$ contains a fraction of the iterates.

\begin{lemma}\label{lem:lbfgs_bdd_cos} 
Let $\{x_k\}$, {$\{p_k\}$} be generated by Algorithm \ref{alg:noise-tolerant L-BFGS}, using either the full-BFGS or L-BFGS variant. 
Then for any $0 < q < 1$, there exists $\gamma > 0$ such that 
\begin{equation}\label{eq:J_qk}
    |{J(\gamma) \cap [0, k-1]}| \geq q k  ,
\end{equation}
where $J(\gamma)$ is defined by (\ref{eq:J_def}).

\end{lemma}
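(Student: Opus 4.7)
The plan is to leverage the uniform curvature bounds \eqref{eq:tool_cond}, which Lemma \ref{lem:noise_to_tool} provides at every iteration of Algorithm \ref{alg:noise-tolerant L-BFGS}. These bounds place us directly in the setting of the Byrd--Nocedal tool \cite{ByNoTool} for the full-BFGS variant, and they admit a more direct argument for the L-BFGS variant that in fact yields the stronger conclusion that $J(\gamma)$ contains \emph{every} iterate.

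For the full-BFGS variant, I would use the potential function $\psi(B) = \tr(B) - \ln\det(B)$, which satisfies $\psi(B) \geq d$ on the symmetric positive definite cone (with equality at $B = I$). Starting from the standard BFGS trace and determinant identities
\[
\tr(B_{k+1}) = \tr(B_k) - \frac{\|B_k s_k\|^2}{s_k^T B_k s_k} + \frac{\|y_k\|^2}{y_k^T s_k}, \qquad \det(B_{k+1}) = \det(B_k)\,\frac{y_k^T s_k}{s_k^T B_k s_k},
\]
and the elementary inequality $\ln u - u \leq -1$ applied to $u = (s_k^T B_k s_k)/(\|s_k\|^2 \cos^2\theta_k)$, a short manipulation combined with the bounds \eqref{eq:tool_cond} yields the one-step recursion
\[
\psi(B_{k+1}) \leq \psi(B_k) + (\widehat M - 1 - \ln \widehat m) + \ln\cos^2\theta_k.
\]
Telescoping from $j = 0$ to $k-1$ and using $\psi(B_k) \geq d$ produces $\sum_{j=0}^{k-1} \ln\cos^2\theta_j \geq -Ck - C_0$, where $C = \widehat M - 1 - \ln\widehat m$ and $C_0 = \psi(B_0) - d$. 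A pigeonhole argument then finishes: for any $q \in (0,1)$, choose $\gamma \in (0,1)$ small enough that $-2(1-q)\ln\gamma > C$. If fewer than $qk$ of the first $k$ iterates were good, the more than $(1-q)k$ bad iterates would each contribute $\ln\cos^2\theta_j < 2\ln\gamma$ to the sum while the good ones contribute at most $0$, giving $\sum_{j<k} \ln\cos^2\theta_j < 2(1-q)k\ln\gamma$, which contradicts the lower bound for large $k$. A further mild reduction of $\gamma$ absorbs $C_0$ and covers the remaining finite range of $k$.

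For the L-BFGS variant, the argument is more direct. At iteration $k$, $H_k$ is obtained by applying at most $t$ BFGS updates to an initial approximation $H_k^{(0)}$ with bounded condition number (the standard choice $H_k^{(0)} = (s_{k-1}^T y_{k-1}/\|y_{k-1}\|^2) I$ is a scalar multiple of the identity, so $\kappa(H_k^{(0)}) = 1$, and \eqref{eq:tool_cond} keeps its scalar bounded). Because every curvature pair used satisfies \eqref{eq:tool_cond}, the same $\psi$-based recursion as above---but now iterated only a bounded number of times---shows that $\kappa(H_k) \leq \bar\kappa$ for a constant $\bar\kappa = \bar\kappa(t,\widehat m,\widehat M)$ independent of $k$. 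Consequently,
\[
\cos\theta_k = \frac{g(x_k)^T H_k g(x_k)}{\|g(x_k)\|\,\|H_k g(x_k)\|} \geq \frac{\lambda_{\min}(H_k)}{\lambda_{\max}(H_k)} \geq \frac{1}{\bar\kappa},
\]
so every iterate is good with $\gamma = 1/\bar\kappa$, and \eqref{eq:J_qk} follows trivially for every $q \leq 1$.

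The hardest step will be the one-line derivation of the $\psi$-recursion in the BFGS case: the trace and determinant updates must be combined with the $\ln u - u \leq -1$ trick in exactly the right way so that the bounds $\widehat m$ and $\widehat M$ from \eqref{eq:tool_cond} produce a clean additive constant. Once that recursion is obtained, both the pigeonhole step for BFGS and the uniform condition-number bound for L-BFGS are standard. A minor subtlety is making the conclusion hold uniformly in $k$ rather than only asymptotically, which is addressed by shrinking $\gamma$ to dominate the initial-condition constant $C_0$.
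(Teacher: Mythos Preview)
Your proposal is correct and follows essentially the same approach as the paper. The only difference is cosmetic: for the full-BFGS case the paper simply invokes Theorem~2.1 of \cite{ByNoTool} as a black box, whereas you sketch the proof of that theorem (the $\psi$-recursion via the trace/determinant identities, telescoping, and the pigeonhole argument); for the L-BFGS case both you and the paper use the same direct argument---bound $\psi(B_k)$ by iterating the one-step $\psi$-inequality at most $t$ times from a controlled initial scaling, deduce a uniform bound on $\kappa(B_k)$, and conclude $\cos\theta_k \geq 1/\kappa(B_k)$ for every $k$.
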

\begin{proof}
For the full-BFGS variant of Algorithm \ref{alg:noise-tolerant L-BFGS},
since we have shown that \eqref{eq:tool_cond} holds, Theorem 2.1 in \cite{ByNoTool} guarantees that for any $0 < q < 1$, there exists $\gamma_F > 0$ such that 
\begin{equation}
    |{J(\gamma_F) \cap [0, k-1]}| \geq q k  .
\end{equation}

For the L-BFGS method with memory length $t$, we have $B_k = H_k^{-1} = B_{k, t}$, where $B_{k, i+1}$ are computed by applying BFGS update to $B_{k, i}$ with the curvature pair $(s_{k + i - t}, y_{k + i - t})$, and $B_{k, 0}$ is defined by 
\begin{equation*}
    B_{k, 0} = \frac{1}{\gamma_k} I, ~\gamma_k = \frac{s_{k-1}^T y_{k-1}}{y_{k-1}^T y_{k-1}}.
\end{equation*}
Now we apply techniques developed in \cite{ByNoTool}. For any positive definite matrix $B$, let
\begin{equation*}
    \psi(B) = \tr B - \log \det B.
\end{equation*} 
Since all curvature pairs $\{(s_k, y_k)\}$ satisfy \eqref{eq:tool_cond}, by \cite[(2.9)]{ByNoTool} we have 
\begin{equation*}
    \psi(B_{k, i+1}) \leq \psi(B_{k, i}) + (\widehat M - \log \widehat m).
\end{equation*}
Therefore, we have 
\begin{equation*}
    \psi(B_{k}) = \psi(B_{k, t}) \leq \psi(B_{k, 0}) + t (\widehat M - \log \widehat m).
\end{equation*}
By \cite[(2.7)]{ByNoTool}, we have
\begin{align*}
    \kappa(B_{k}) & \leq \exp \left[\psi(B_{k})\right] \leq \exp \left[\psi(B_0) + t (\widehat M - \log \widehat m)  \right]  \\
    & = \left[\gamma_k e^{1/\gamma_k}\right]^d\exp \left[t (\widehat M - \log \widehat m)  \right] .
\end{align*}
By \eqref{eq:tool_cond} and the Cauchy-Schwarz inequality, 
\begin{equation*}
    \widehat{m} \leq \frac{s_{k-1}^T y_{k-1}}{s_{k-1}^T s_{k-1}} \leq \frac{y_{k-1}^T y_{k-1}}{s_{k-1}^T y_{k-1}} = \frac{1}{\gamma_k} \leq \widehat{M},
\end{equation*}
hence,
\begin{equation*}
\gamma_k e^{1/\gamma_k} = ~  e^{1/\gamma_k - \log (1/\gamma_k)} \leq ~  \exp [\widehat M - \log \widehat m],
\end{equation*}
which implies that
\begin{equation*}
    \kappa(B_{k}) \leq \exp \left[(d + t ) (\widehat M - \log \widehat m) \right].
\end{equation*}
Finally, note that since $s_k = - \beta_k H_k g_k$ and $H_k B_k = I$,
\begin{align*}
    \cos \theta_k & = \frac{g_k^T H_k g_k}{\|g_k\|\|H_k g_k\|} = \frac{s_k^T B_k s_k}{\lVert s_k \rVert \lVert B_k s_k \rVert} \geq \frac{\lambda_{\text{min}}(B_k)\lVert s_k \rVert^2}{\lambda_{\text{max}}(B_k)\lVert s_k \rVert^2} = \frac{1}{\kappa(B_k)} \\
    & \geq \exp \left[-(d + t ) (\widehat M - \log \widehat m) \right].
\end{align*}
Therefore, we have
\begin{equation*}
    \cos \theta_k \geq \gamma_L \equiv \exp \left[-(d + t ) (\widehat M - \log \widehat m) \right], ~ \forall k \in \mathbb{N} ,
\end{equation*}
i.e., 
\begin{equation*}
    |J(\gamma_L) \cap [0, k-1]| = k, ~\forall k \in \mathbb{N}
\end{equation*}
which finishes the proof.
\end{proof}

By the discussions above, for both full-BFGS and L-BFGS variants of Algorithm~\ref{alg:noise-tolerant L-BFGS}, we can choose a fixed $q^* \in (0, 1)$ and find $\gamma^* > 0$ such that 
\begin{equation}\label{eq:good_iters}
    |J(\gamma^*) \cap [0, k-1]| \geq q^* k, ~~\forall k \in \mathbb{N}; 
\end{equation}
i.e., such that a fraction of iterates are guaranteed to be good iterates. From now on, let us fix the choice $q^*$ and $\gamma^*$. 
Using the above results together with the analysis in \cite{xie2020analysis} we arrive at the following convergence result. 

\begin{theorem}\label{thm:bfgs_conv}  
Suppose that Assumptions \ref{as:mMsmooth} and \ref{as:bdd} hold. 
Let $\{x_k\}$ be generated by Algorithm \ref{alg:noise-tolerant L-BFGS}, using either L-BFGS or standard BFGS. Fix $q^* \in (0, 1)$ and choose $\gamma^* > 0$ such that \eqref{eq:good_iters} holds. 
Define
\begin{equation}\label{eq:n1_def}
   \mathcal{N}_1 = \left\{x ~\Bigg|~ \lVert{\nabla \phi(x)}\rVert \leq \max \Bigg\{ A \frac{\sqrt{{M}\epsilon_f}}{\gamma^*}, B  \frac{\epsilon_g}{\gamma^*}  \Bigg\}\right\},
\end{equation}
and 
\begin{equation}\label{eq:n2_def}
    \mathcal{N}_2 = \left\{x ~\Big|~ \phi(x) \leq 2\epsilon_f + \max_{y \in \mathcal{N}_1} ~ \phi(y)\right\} \supseteq  \mathcal{N}_1,
\end{equation}
where 
\begin{align*}
	A & = \max \left\{\frac{16 \sqrt{2}}{\sqrt{(c_2-c_1) (4-c_1-3 c_2)}},\frac{8}{\sqrt{c_1(1-c_2)}}\right\} \\
	B & = \max \left\{\frac{8}{1-c_2},\frac{8 (1+c_1)}{c_2-c_1}+6\right\}.
\end{align*}
Let
\begin{equation}\label{eq:K_def}
    K = \min_{k} ~ \{k \in \mathbb{N} ~|~ x_k \in \mathcal{N}_1\}
\end{equation}
be the index of the first iterate that enters $\mathcal{N}_1$. Assume that for all iterates $k \in J(\gamma^*)$ such that $x_k \notin \mathcal{N}_1$ the line search procedure finds $\alpha_k$ satisfying  \eqref{eq:armijo}--\eqref{eq:wolfe}. Then there exists $\rho \in (0, 1)$ such that
\begin{equation*}
    \phi(x_k) - \phi^* \leq \rho^k ~ (\phi(x_0) - \phi^*) + 2\epsilon_f, ~~ \forall k \leq K-1 .
\end{equation*}
Moreover, we have that $K < + \infty$ and
\begin{equation*}
    x_k \in \mathcal{N}_2, ~~ \forall k \geq K.
\end{equation*}
\end{theorem}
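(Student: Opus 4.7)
The plan is to emulate the convergence framework of Xie et al.~\cite{xie2020analysis}, replacing their direct lengthening assumption $\beta_k\|p_k\|\geq 2\epsilon_g/m$ by the noise control condition \eqref{eq:noise} that is actually enforced by Algorithm~\ref{alg:noise-tolerant L-BFGS}. The setup is already in place: Lemma~\ref{lem:noise_to_tool} certifies the uniform curvature bounds \eqref{eq:tool_cond}, and Lemma~\ref{lem:lbfgs_bdd_cos} then guarantees that a fraction $q^*$ of the first $k$ iterates are ``good'' in the sense that $\cos\theta_k \geq \gamma^*$. These replace the analogous ingredients in \cite{xie2020analysis} that depended on knowing $m$ directly.

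The first and main step is to establish a per-iteration contraction on good iterates outside $\mathcal{N}_1$. For each $k \in J(\gamma^*)$ with $x_k \notin \mathcal{N}_1$, the hypothesis of the theorem supplies $\alpha_k$ satisfying \eqref{eq:armijo}--\eqref{eq:wolfe}; a standard Armijo--Wolfe/Lipschitz derivation, keeping track of the $\epsilon_f$ and $\epsilon_g$ gaps between $(f,g)$ and $(\phi, \nabla\phi)$, yields an inequality of the form
\begin{equation*}
    \phi(x_{k+1}) \leq \phi(x_k) - \eta \, (\cos\theta_k)^2 \, \|\nabla\phi(x_k)\|^2 + R(\epsilon_f,\epsilon_g,\|\nabla\phi(x_k)\|),
\end{equation*}
where $\eta=\eta(c_1,c_2,M)>0$ and $R$ collects the noise remainders (terms linear in $\epsilon_g\|\nabla\phi(x_k)\|$ and constant in $\epsilon_f$). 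The constants $A$ and $B$ in the definition of $\mathcal{N}_1$ are chosen precisely so that, for $x_k \notin \mathcal{N}_1$, the term $\eta(\gamma^*)^2\|\nabla\phi(x_k)\|^2$ strictly dominates $R$, giving after strong convexity ($\phi(x_k)-\phi^* \leq \tfrac{1}{2m}\|\nabla\phi(x_k)\|^2$) a bound $\phi(x_{k+1})-\phi^* \leq (1-C)(\phi(x_k)-\phi^*) + 2\epsilon_f$ for an explicit $C = C(m,M,c_1,c_2,\gamma^*)>0$. On the remaining iterates (bad iterates or line-search failures), the algorithm's fallback ensures $f(x_{k+1}) \leq f(x_k)$ and hence $\phi(x_{k+1}) \leq \phi(x_k) + 2\epsilon_f$. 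Combining the two estimates along the $\lceil q^* k\rceil$ good iterates collected by Lemma~\ref{lem:lbfgs_bdd_cos}, telescoping, and summing the geometric series of $2\epsilon_f$ terms then yields
\begin{equation*}
    \phi(x_k) - \phi^* \leq \rho^k\,(\phi(x_0)-\phi^*) + 2\epsilon_f, \qquad \rho = (1-C)^{q^*} \in (0,1),
\end{equation*}
valid for all $k \leq K-1$. Since $\rho<1$, this right-hand side must eventually fall below the threshold that defines $\mathcal{N}_1$ via strong convexity, forcing $K<\infty$.

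For the tail statement, the plan is to exploit the fact that the algorithm never increases $f$: $f(x_{k+1}) \leq f(x_k)$ for every $k$. Hence once $x_K\in\mathcal{N}_1$ we have $f(x_k) \leq f(x_K) \leq \phi(x_K) + \epsilon_f \leq \max_{y\in\mathcal{N}_1}\phi(y) + \epsilon_f$ for all $k\geq K$, which combined with $\phi(x_k) \leq f(x_k) + \epsilon_f$ puts $\phi(x_k)$ within $2\epsilon_f$ of $\max_{y\in\mathcal{N}_1}\phi(y)$, and therefore $x_k \in \mathcal{N}_2$ by definition. The expected main obstacle is Step~1: the careful bookkeeping of the noise contributions in the Armijo--Wolfe analysis using the bounds $\widehat m = \tfrac{1+c_3}{2+c_3}m$ and $\widehat M = (1+1/c_3)M$ from \eqref{eq:tool_cond}, and verifying that the particular $A,B$ appearing in \eqref{eq:n1_def} are in fact large enough to dominate those contributions. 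This bookkeeping parallels the one in \cite{xie2020analysis} but must be redone with the noise-control-based curvature bounds rather than with $m$ itself.
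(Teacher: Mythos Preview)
Your approach is essentially the same as the paper's: reduce to the convergence framework of \cite{xie2020analysis}, using Lemma~\ref{lem:noise_to_tool} and Lemma~\ref{lem:lbfgs_bdd_cos} to supply the curvature bounds and the ``good iterate'' angle condition that \cite{xie2020analysis} obtained from the explicit lengthening rule $\beta_k\|p_k\|\geq 2\epsilon_g/m$. The paper's proof simply observes that once $\cos\theta_k\geq\gamma^*$ is secured, the line-search and descent analysis in \cite{xie2020analysis} (their Theorems 3.5 and 3.16--3.18) carries over verbatim with $\gamma^*$ in place of their $\beta_1$; your sketch spells out what those steps are.

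One clarification on your ``expected main obstacle'': the Armijo--Wolfe descent bookkeeping does \emph{not} use $\widehat m,\widehat M$ from \eqref{eq:tool_cond}. Those constants are consumed entirely in Lemma~\ref{lem:lbfgs_bdd_cos} to produce $\gamma^*$. From that point on, the per-iteration decrease analysis involves only $\cos\theta_k$, the Lipschitz constant $M$ of $\nabla\phi$, the line-search constants $c_1,c_2$, and the noise levels $\epsilon_f,\epsilon_g$; the curvature-pair bounds play no further role. This is exactly why the constants $A,B$ in \eqref{eq:n1_def} coincide with those in \cite{xie2020analysis} and why nothing needs to be ``redone'' --- the descent argument is literally the same once the angle condition is in hand. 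So the obstacle you flag is lighter than you anticipate.
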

\begin{proof}
Note that Algorithm \ref{alg:noise-tolerant L-BFGS} differs from Algorithm 2.1 of \cite{xie2020analysis}, only in the quasi-Newton updating strategy and lengthening procedure.  This implies that the results through Theorem 3.5 of \cite{xie2020analysis} concerning the existence of an Armijo-Wolfe stepsize, also apply to Algorithm 2.1 of this paper, since the proofs of these these results do not depend on the update used. In Lemma~\ref{lem:noise_to_tool} of this paper we showed that the  lengthening procedure in step~5 of Algorithm~\ref{alg:noise-tolerant L-BFGS} guarantees bounds on $(s_k^T y_k/s_k^T s_k)$ and $(y_k^T y_k/s_k^T y_k) $
such as those of Lemma 3.8 of \cite{xie2020analysis}. Using these bounds we established Lemma~\ref{lem:lbfgs_bdd_cos} whose results are identical to those of Corollary 3.10 in \cite{xie2020analysis}, with $\gamma^*$ replacing $\beta_1$. With that change, the rest of the results of \cite{xie2020analysis}, including Theorems 3.16--3.18, hold for Algorithm~\ref{alg:noise-tolerant L-BFGS} of this paper,  proving the theorem.
\end{proof}

Theorem \ref{thm:bfgs_conv} states that the iterates generated by Algorithm \ref{alg:noise-tolerant L-BFGS} converge linearly to a neighborhood of the solution $\mathcal{N}_1$, whose size depends on the noise levels $\epsilon_f, \epsilon_g$; the iterates will enter $\mathcal{N}_1$ in finite number of iterations, and will remain in a larger neighborhood $\mathcal{N}_2$ thereafter.

\section{A Practical Algorithm}
In order to implement Algorithm~\ref{alg:noise-tolerant L-BFGS}, we need to design  a  practical procedure for computing 
the steplength  $\alpha_k$ and the lengthening parameter $\beta_k$. This can be done in various ways, and in this section we present a technique that has performed well in practice. After describing this algorithm in  detail, we present several heuristics designed to improve its practical performance.

\subsection{Two-Phase Line Search and Lengthening Procedure}\label{sec: two-phase}

Algorithm \ref{alg:noise-tolerant L-BFGS} and the convergence analysis of the previous section require that $\alpha_k$ and $\beta_k$ satisfy conditions \eqref{eq:armijo}, \eqref{eq:wolfe} and \eqref{eq:noise}. We now propose a procedure for computing these quantities.

The line search operates in two phases. The \textit{initial phase} attempts to satisfy three conditions with the same parameter $\alpha_k = \beta_k$:
\begin{align}
    f(x_k + \alpha_k p_k) & \leq f(x_k) + c_1 \alpha_k g(x_k)^T p_k & &  \label{eq:relax armijo} \\
    g(x_k + \alpha_k p_k)^T p_k & \geq c_2 g(x_k)^T p_k & &  \label{eq:relax wolfe}\\
    |(g(x_k + \alpha_k p_k) - g(x_k))^T p_k| & \geq 2 (1 + c_3) \epsilon_g \|p_k\|, & &  \label{eq:relax noise}
\end{align}
where $0 < c_1 < c_2 < 1$ and $c_3 > 0$. Observe that  \eqref{eq:relax noise} and the Wolfe condition \eqref{eq:relax wolfe} imply the  noise control condition \eqref{eq:noise} employed so far in the paper. We incorporate the absolute value in \eqref{eq:relax noise} in order to introduce a symmetric noise condition that can be used to determine when to adapt $\alpha_k$ and $\beta_k$ independently. If $\epsilon_f = \epsilon_g = 0$, then we can guarantee that the initial phase will reduce to the standard Armijo-Wolfe line search, as we describe below.


The initial phase is done using the logic of the standard bisection search: backtracking if the Armijo condition is not satisfied, and advancing if the Armijo condition is satisfied and the Wolfe condition is not, but with one important modification. If the Armijo condition \eqref{eq:relax armijo} is satisfied, we will check \eqref{eq:relax noise} prior to checking the Wolfe condition \eqref{eq:relax wolfe}.

If at \textit{any} iteration of the line search the noise control condition \eqref{eq:relax noise} is not satisfied or if the line search has performed more than the allowed number ($N_{\text{split}}$) of iterations, then the initial phase is terminated and the second phase, which we call the \textit{split phase}, is triggered. In this phase, $\alpha_k$ and $\beta_k$ are updated independently from each other. The steplength $\alpha_k$ is updated via the standard Armijo backtracking line search while the lengthening parameter $\beta_k$ is lengthened independently until the conditions
\begin{align}
    f(x_k + \alpha_k p_k) & \leq f(x_k) + c_1 \alpha_k g(x_k)^T p_k \\
    (g(x_k + \beta_k p_k) - g(x_k))^T p_k & \geq 2 (1 + c_3) \epsilon_g \|p_k\|
\end{align}
are satisfied. We backtrack more aggressively (by a factor of 10) in the split phase in order to mitigate the cost of additional function evaluations. The limit $N_{\text{split}}$ is imposed to prevent the line search from being fooled from noise indefinitely.

The  two-phase line search (without heuristics) is presented in Algorithms \ref{alg:initial phase} and \ref{alg:split phase}. For completeness, we also present the pseudocode for the complete practical algorithm in \ref{alg:complete}.

\begin{algorithm}
\caption{Two-Phase Armijo-Wolfe Line Search and Lengthening: Initial Phase}
\label{alg:initial phase}
\begin{algorithmic}[1]
\Input functions $f(\cdot)$ and $g(\cdot)$; noise level $\epsilon_g$; current iterate $x$; search direction $p$; initial steplength $\alpha = 1$; constants $0 < c_1 < c_2 < 1$, $c_3 > 0$; maximum number of line search iterations before split $N_{\text{split}}$
\State $l \leftarrow 0$, $u \leftarrow \infty$; \Comment{Initialize brackets for bisection}
\For{$i = 0, 1, 2, ..., N_{\text{split}} - 1$}
\If{$f(x + \alpha p) > f(x) + c_1 \alpha g(x)^T p$} \Comment{Armijo condition fails}
\State $u \leftarrow \alpha$;
\State $\alpha \leftarrow (u + l) / 2$; \Comment{Backtrack}
\ElsIf{$|(g(x + \alpha p) - g(x))^T p| < 2(1 + c_3) \epsilon_g \|p\|$} \Comment{Noise control condition fails}    
\State Break (for loop)
\ElsIf{$g(x + \alpha p)^T p < c_2 g(x)^T p$} \Comment{Wolfe condition fails}
\State $l \leftarrow \alpha$;
\If{$u = \infty$} \Comment{Advance}
\State $\alpha \leftarrow 2 \alpha$;
\Else
\State $\alpha \leftarrow (u + l) / 2$;
\EndIf
\Else \Comment{Satisfies all conditions}
\State $\beta \leftarrow \alpha$ ;
\State Return $\alpha, \beta$;
\EndIf
\EndFor
\State $\alpha, \beta \leftarrow {\rm SplitPhase}(f, g, \epsilon_g, x, p, \alpha, \beta)$; \Comment{Enter split phase}
\State Return $\alpha, \beta$;
\end{algorithmic}
\end{algorithm}

\begin{algorithm}
\caption{Split Phase}
\label{alg:split phase}
\begin{algorithmic}[1]
\Input functions $f(\cdot)$ and $g(\cdot)$; noise level $\epsilon_g$; current iterate $x$; search direction $p$; initial steplength $\alpha$; initial lengthening parameter $\beta$, constants $0 < c_1 < c_2 < 1$, $c_3 > 0$
\While{$f(x + \alpha p) > f(x) + c_1 \alpha g(x)^T p$} \Comment{Armijo condition}
\State $\alpha = \alpha / 10$; \Comment{Backtrack}
\EndWhile
\While{$(g(x + \beta p) - g(x))^T p < 2 (1 + c_3) \epsilon_g \|p\|$} \Comment{Noise control condition}
\State $\beta = 2 \beta$; \Comment{Lengthen}
\EndWhile
\State Return $\alpha, \beta$;
\end{algorithmic}
\end{algorithm}

\begin{algorithm}
\caption{Complete Practical Noise-Tolerant BFGS and L-BFGS Methods}
\label{alg:complete}
\begin{algorithmic}[1]
\Input function $f(\cdot)$ and gradient $g(\cdot)$; noise level in function $\epsilon_f$, noise level in gradient $\epsilon_g$; initial iterate $x_0$ and Hessian approximation $H_0 \succ 0$;
\For{$k = 0, 1, 2, ...$}
\State Compute $p_k = - H_k g(x_k)$ by matrix-vector multiplication (BFGS) or two-loop recursion \cite{mybook} (L-BFGS);
\State Perform two-phase Armijo-Wolfe line search (Algorithms \ref{alg:initial phase} and \ref{alg:split phase}) to find $\alpha_k$ and $\beta_k$;
\If{$\alpha_k$ satisfies \eqref{eq:relax armijo}}
\State Compute $x_{k+1} = x_k + \alpha_k p_k$;
\EndIf
\If{$\beta_k$ satisfies \eqref{eq:noise}}
\State Compute curvature pair $(s_k, y_k) = (\beta_k p_k, g(x_k + \beta_k p_k) - g(x_k))$;
\State Update $H_k$ by \eqref{eq:bfgs update} (BFGS) or update set $\{(s_i, y_i)\}$ of curvature pairs (L-BFGS);
\EndIf
\EndFor
\end{algorithmic}
\end{algorithm}

By the design of the two-phase line search, our algorithm behaves the same as the standard (L-)BFGS algorithm (without interpolation) for non-noisy problems as long as $N_{\text{split}}$ is sufficiently large because the split phase will never occur. In particular, if $\epsilon_g = 0$, then condition \ref{eq:relax noise} will always be satisfied by any $\alpha_k$ and therefore the initial phase reduces to the standard Armijo-Wolfe line search.
However, unlike the deterministic setting, the two-phase line search may not be guaranteed to find $\alpha_k$ and $\beta_k$ under certain scenarios. When the iteration has reached the region where errors are large relative to the gradient, the backtracking line search may fail to find $\alpha_k$; this is to be expected. A more subtle case is when the function is exceedingly flat along the search direction $p_k$ so that even for a large $\beta$ the function exhibits insufficient change in curvature; in this case the lengthening procedure may fail to find an appropriate $\beta_k$. To safeguard against both of these cases, the algorithm will terminate if it reaches a maximum number of line search iterations.

\medskip\textit{Remark 2.}  The two-phase algorithm just described may seem too complex. Let us  consider some simpler alternative strategies. 
One approach is to employ only the split phase: (1) Compute $\alpha_k$ solely through a backtracking line search until the Armijo condition is satisfied; and (2) Computing $\beta_k$ through a lengthening procedure that enforces both of the modified noise control and  Wolfe conditions. However, the Wolfe condition on the steplength $\alpha_k$ allows the algorithm to take longer steps that may yield larger reductions in the objective function. This is in agreement with our computational experience.

A second alternative, given in Algorithm~\ref{alg:noise-tolerant L-BFGS}, is the approach employed by Xie et al. \cite{xie2020analysis}, who first solve for a steplength $\alpha_k$ that satisfies the Armijo-Wolfe conditions \eqref{eq:relax armijo}-\eqref{eq:relax wolfe}, then lengthen $\beta_k \geq \alpha_k$ until $\beta_k$ satisfies the noise control condition \eqref{eq:relax noise}. However, we have found experimentally that performing an Armijo-Wolfe line search attempting to find a steplength that satisfies the Armijo-Wolfe conditions in the presence of noise can be costly in terms of function and gradient evaluations because the Armijo-Wolfe line search may be fooled for many iterations in the presence of moderate to large noise relative to the gradient. In particular, enforcing the Wolfe condition on the steplength when the gradient is dominated by noise may lead to ill-advised or unnecessary changes to the steplength.
Rather than doing this, we opt to split the computations of $\beta$ and $\alpha$ earlier, as done in Algorithm \ref{alg:initial phase} using \eqref{eq:relax noise} as a means to detect when to split and consider the Wolfe condition unreliable.

\subsection{Heuristics}  \label{sec:implement}
We now describe some heuristics that have improved the performance of the two-phase line search for the models of noise employed in our experiments.

\medskip{\it I. Relaxation of Armijo Condition.}
The last term in the Armijo condition \eqref{eq:relax armijo} ensures sufficient descent, but is useful only if the quantities involved are reliable; otherwise it is best to dispense with this term. To see this, consider the term  $g(x_k)^T p_k$. Although $g(x_k)^T p_k = - g(x_k)^T H_k g(x_k) < 0$ since $H_k$ is positive definite, this quantity could still be dominated by noise. If $g(x_k)^T p_k < - \epsilon_g \|p_k\|$, we can guarantee that $\nabla \phi(x_k)^T p_k < 0$, ensuring that $p_k$ is a descent direction with respect to the true objective function. If instead we have that $g(x_k)^T p_k \geq - \epsilon_g \|p_k\|$, it is not guaranteed that we can make progress on the true objective function along $p_k$. In this case, we will consider the gradient estimate unreliable and dispense the sufficient decrease term, instead relaxing the condition to only enforce simple decrease
$f(x_k + \alpha p_k) < f(x_k).$

Another feature that is useful when the algorithm reaches a region where the noise in the function is large relative to the  objective function is
%
%
to  relax the Armijo condition \eqref{eq:relax armijo} by adding  $2 \epsilon_f$ to the right hand side. This relaxation will be done only after the first attempt at satisfying the standard Armijo condition fails. If $p_k$ is a descent direction with respect to $\phi$, which is ensured when $g(x_k)^T p_k < - \epsilon_g \|p_k\|$, then this relaxation guarantees finite termination of the line search component in the split phase. Other related line searches employing this relaxation of the Armijo condition have been analyzed in \cite{berahas2019global}. 

Combining the two strategies described above, our relaxed Armijo condition can be summarized as follows:
\begin{equation} \label{eq:relaxed armijo}
    f(x_k + \alpha_k^i p_k) 
    \begin{cases}
    \leq f(x_k) + c_1 \alpha_k^i g(x_k)^T p_k & \mbox{if } i = 0, ~ g(x_k)^T p_k < -\epsilon_g \|p_k\| \\
    \leq f(x_k) + c_1 \alpha_k^i g(x_k)^T p_k + 2 \epsilon_f & \mbox{if } i \geq 1, ~ g(x_k)^T p_k < -\epsilon_g \|p_k\| \\
    < f(x_k) & \mbox{if } i = 0, ~ g(x_k)^T p_k \geq -\epsilon_g \|p_k\| \\
    < f(x_k) + 2 \epsilon_f & \mbox{if } i \geq 1, ~ g(x_k)^T p_k \geq -\epsilon_g \|p_k\|
    \end{cases}
\end{equation}
where $\alpha^i_k$ denotes the $i$-th trial steplength at iteration $k$.

\medskip{\it II. Reusing Previously Computed $\alpha$.} \label{sec: reinit}
Over the course of the initial phase, we will track the best steplength that we have seen that satisfies the Armijo condition
\begin{equation}
    \alpha_k^{\text{best}} \in \arg\min_{\alpha_k^i} \{f(x_k + \alpha_k^i p_k) : \text{\eqref{eq:relaxed armijo} is satisfied}\}
\end{equation}
as well as its corresponding function value. If the split phase is triggered, we will accept the previously computed value of $\alpha_k = \alpha_k^{\text{best}}$ that most decreased the objective function.


\medskip{\it III. Initial Value of $\beta$.}
It is important to employ a good initial estimate of $\beta$ when entering the split phase, in  order to mitigate the cost of the search procedure.   Recall from \eqref{initbeta} that an appropriate value of the lengthening parameter is, roughly,
\begin{equation}  \label{inbeta}
    {\beta}_k = \frac{2(1 + c_3) \epsilon_g}{m \|p_k\|_2}.
\end{equation}
This formula relies on the strong convexity parameter $m$, which is generally not known, but since we are only using it to compute an initial value for $\beta$, it is not critical to estimate $m$ accurately. In this vein, we compute a local estimate of $m$ using
the observed $(s, y)$ pairs from prior iterations. For $\beta_j$ with $j < k$ that satisfies both \eqref{eq:relax wolfe} and \eqref{eq:relax noise}, we first compute an estimate of the curvature along the search direction {$p_j$} corresponding to the interval length $\beta_j$:

\begin{equation}
    \bar{\mu}_j = \frac{(g(x_j + \beta_j p_j) - g(x_j))^T p_j}{\beta_j \|p_j\|_2^2}.
\end{equation}
To estimate the strong convexity parameter $m$ we track the last $h$ values of the $\bar{\mu}$'s, then use the smallest of these:
$$\mu_k = \min\{\bar{\mu}_{k - 1}, \bar{\mu}_{k - 2}, ..., \bar{\mu}_{k - h}\}.$$
This aims to be only a local strong convexity estimate, whereas taking the minimum over all previous $\bar{\mu}$'s may be overly pessimistic. Let us denote by $\bar{\beta}_k$ the value obtained by making the substitution $m \leftarrow \mu_k$ in \eqref{inbeta}, 
and let $\beta_k^i$ denote the $i$th trial lengthening parameter at iteration $k$, we define the initial value of the lengthening parameter for the split phase as 
\begin{equation}
    \beta_k^{i + 1} = \max \{2 \beta_k^i, \bar{\beta}_k\}.
\end{equation}
We have observed in our tests that this procedures allows us to significantly mitigate the cost of additional gradient evaluations that are incurred when lengthening $\beta_k$, only requiring an additional $1-3$ gradient evaluations for the lengthening procedure in our experiments.

\section{Numerical Experiments}     \label{numerical}

In this section, we present numerical results illustrating the behavior of the methods proposed in this paper on noisy optimization problems. We compare the classical methods, BFGS and L-BFGS, with their extensions, which we denote as BFGS-E and L-BFGS-E. 

In addition, we study another approach suggested by the noise control condition \eqref{eq:noise}, based on the well known strategy of skipping a quasi-Newton update when it may not be reliable. In the BFGS (Skips) and L-BFGS (Skips) methods, the quasi-Newton update is not performed if the noise control condition is not satisfied for $c_3 = 0$, that is,
\begin{equation}   \label{skipping}
    (g(x_k + \alpha_k p_k) - g(x_k))^T p_k < 2 \epsilon_g \|p_k\|.
\end{equation}
Specifically, these methods compute a steplength $\alpha_k$ satisfying the Armijo-Wolfe conditions \eqref{eq:armijo}-\eqref{eq:wolfe}, and if condition \eqref{skipping} holds, the BFGS update is not performed and the next step is computed using the Hessian approximation $B_k$ from the previous iteration; otherwise the iteration is identical to that of the BFGS and L-BFGS methods. (In the L-BFGS (Skips) method, the correction pair $(s_k, y_k)$ is not stored when \eqref{skipping} holds.)

In summary, the 6 methods tested are:
\begin{enumerate}
    \item  {\tt BFGS}: the standard BFGS method given by
    \eqref{bfgs}, \eqref{eq:bfgs update}; 
    \item {\tt L-BFGS}: the standard L-BFGS method with memory $t= 10$; \cite[chapter 7]{mybook};
    \item {\tt BFGS (Skips)}: the standard BFGS method given by \eqref{bfgs}, \eqref{eq:bfgs update}, but skipping the BFGS update when \eqref{eq:noise} is not satisfied for $\beta_k = \alpha_k$ with $c_3 = 0$;
    \item {\tt L-BFGS (Skips)}: the standard L-BFGS method with memory $t = 10$, but skipping the L-BFGS update when \eqref{eq:noise} is not satisfied for $\beta_k = \alpha_k$ with $c_3 = 0$;
    \item {\tt BFGS-E}: the noise tolerant BFGS method given by Algorithms~\ref{alg:noise-tolerant L-BFGS}, \ref{alg:initial phase} and \ref{alg:split phase};
    \item {\tt L-BFGS-E}: the noise tolerant L-BFGS method, which is identical to BFGS-E, except that the Hessian approximation is a limited memory matrix with memory $t=10$.
\end{enumerate}

The first four methods employ an Armijo-Wolfe line search that computes a steplength satisfying \eqref{eq:armijo}-\eqref{eq:wolfe}. The last two methods use the specialized line search described in Algorithms \ref{alg:initial phase} and \ref{alg:split phase}. In the deterministic case, it is common to employ cubic or quadratic interpolation to accelerate  the Armijo-Wolfe search. We did not do so in the  methods listed above, which use a simple bisection, because it is more robust in the presence of noise. The parameters for the line search and termination criteria are provided in Table \ref{tab:params}. 

\begin{table}[htp]
    \centering
    \caption{Parameter Settings for the Methods Tested}
    \label{tab:params}
    \begin{tabular}{c c c c c c}
        \toprule
        $c_1$ & $c_2$ & $c_3$ & $t$ & $N_{\text{split}}$ \\
        \midrule
        $10^{-4}$ & $0.9$ & $0.5$ & $10$ & $30$ \\
        \bottomrule
    \end{tabular}
\end{table}
We selected 41 unconstrained problems from the CUTEst collection \cite{cuter} (see Table \ref{tab:cutest}), and added stochastic uniform noise with different noise levels. The objective function and gradient have the form
\[
    f(x)  = \phi(x) + \epsilon(x), \qquad
    g(x)  = \nabla \phi(x) + e(x),
\]
where we sample $\epsilon(x)$ and $[e(x)]_i$ independently with distribution
\[
    \epsilon(x)  \sim U(-\xi_f, \xi_f), \qquad
    [e(x)]_i  \sim U\left(- \xi_g, \xi_g \right) \mbox{ for } i = 1, ..., d.
\]
This gives the noise bounds $|\epsilon(x)| \leq \epsilon_f = \xi_f$ and $\|e(x)\| \leq \epsilon_g = \sqrt{d} \xi_g$. Among methods for uncertainty quantification, ECNoise \cite{more2011estimating}, point-wise sampling, and domain knowledge could be applied to obtain these bounds in practice. The optimal value $\phi^*$ for each function was obtained by applying the BFGS method to the original deterministic problem until it could not make further progress.

The performance of the methods is best understood by studying the runs on each of the 41 test problems. Since this is impractical due to space limitations, for every experiment, we selected a problem that illustrates typical behavior over the whole test set.

\begin{table}[htp]
    \centering
    \caption{Unconstrained CUTEst Problems Tested. $d$ is the number of variables.}
    \label{tab:cutest}
    \small
    \begin{tabular}{c c c c c c c c}
        \toprule
        PROBLEM & $d$ & & PROBLEM & $d$ & & PROBLEM & $d$ \\
        \midrule
        ARWHEAD & $100$ & & DIXMAANL & $90$ & &  MOREBV & $100$ \\
        BDQRTIC & $100$ & & DIXMAANM & $90$ & &  NCB20B & $100$ \\
        CRAGGLVY & $100$ & & DIXMAANN & $90$ & & NONDIA & $100$ \\
        DIXMAANA & $90$ & & DIXMAANO & $90$ & & NONDQUAR & $100$ \\
        DIXMAANB & $90$ & & DIXMAANP & $90$ & & PENALTY1 & $100$ \\
        DIXMAANC & $90$ & & DQDRTIC & $100$ & & QUARTC & $100$ \\
        DIXMAAND & $90$ & & DQRTIC & $100$ & & SINQUAD & $100$ \\
        DIXMAANE & $90$ & & EIGENALS & $110$ & & SPARSQUR & $100$ \\
        DIXMAANF & $90$ & & EIGENBLS & $110$ & & TOINTGSS & $100$ \\
        DIXMAANG & $90$ & & EIGENCLS & $30$ & & TQUARTIC & $100$ \\
        DIXMAANH & $90$ & & ENGVAL1 & $100$ & & TRIDIA & $100$ \\
        DIXMAANI & $90$ & & FLETCBV3 & $100$ & & WATSON & $31$ \\
        DIXMAANJ & $90$ & & FREUROTH & $100$ & & WOODS & $100$ \\
        DIXMAANK & $90$ & & GENROSE & $100$ \\
        \bottomrule
    \end{tabular}
\end{table}

\subsection{Experiments with Uniform Noise in the Gradient}  \label{onlyg}

In the first set of experiments, the gradient contains uniform noise but the function does not, i.e., $\epsilon_g > 0$ and $\epsilon_f = 0$. This allows us to test the efficiency of the lengthening procedure in a benign setting that avoids the effects of the noisy line search. In these experiments, all algorithms were run for a fixed number of iterations.


We begin by revisiting the \texttt{ARWHEAD} problem from Figure \ref{fig:BFGS on noisy ARWHEAD}, where noise was inserted with $\xi_f = 0$ and $\xi_g = 10^{-3}$. The condition number of the BFGS and BFGS-E matrices is compared in Figure \ref{fig:BFGS-E on noisy ARWHEAD}, 
and shows that the noise control condition \eqref{eq:noise} stabilizes the quasi-Newton update. 
\begin{figure}[htp]
    \centering
    \includegraphics[width=0.45\linewidth]{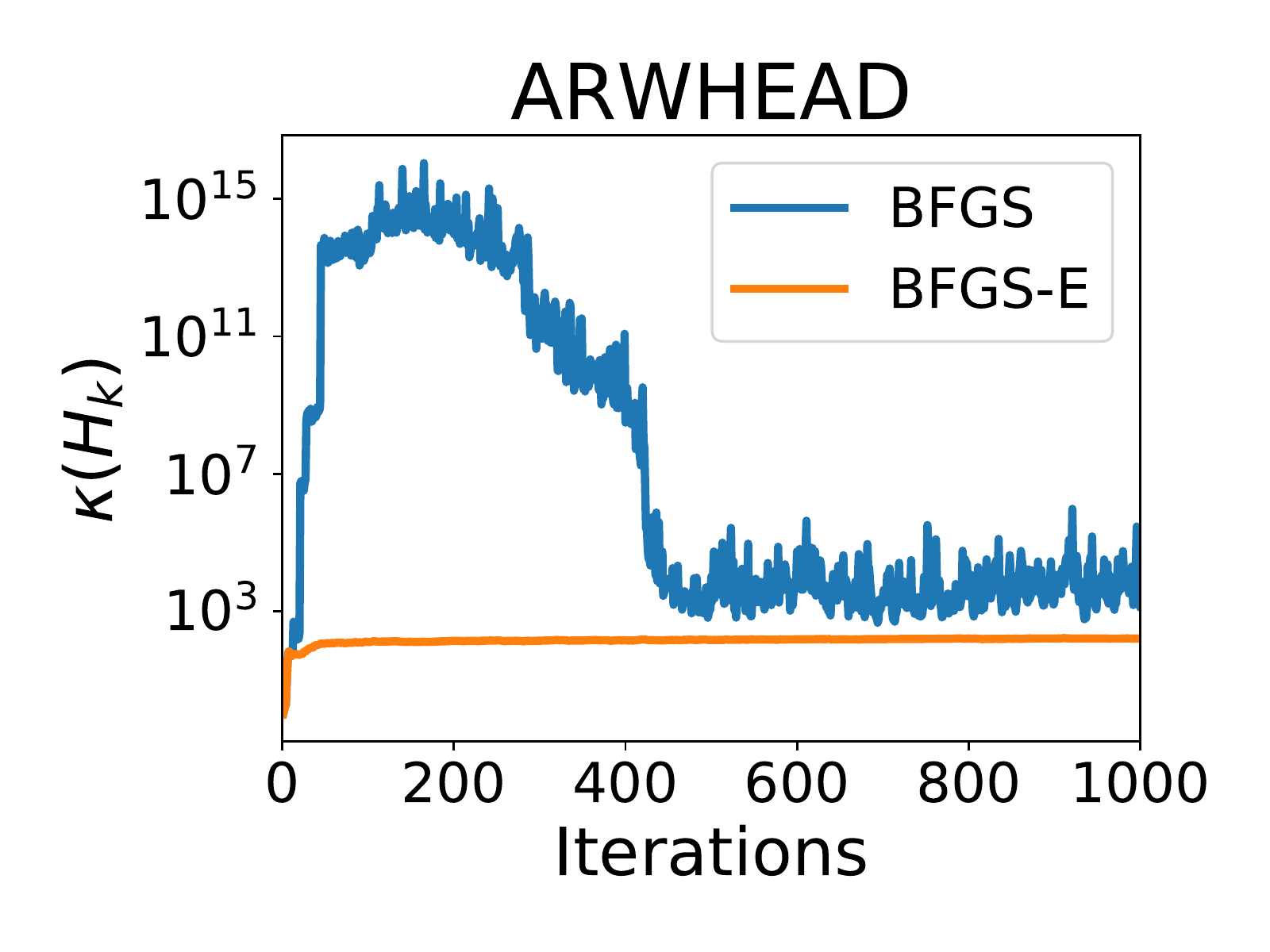}
    \includegraphics[width=0.45\linewidth]{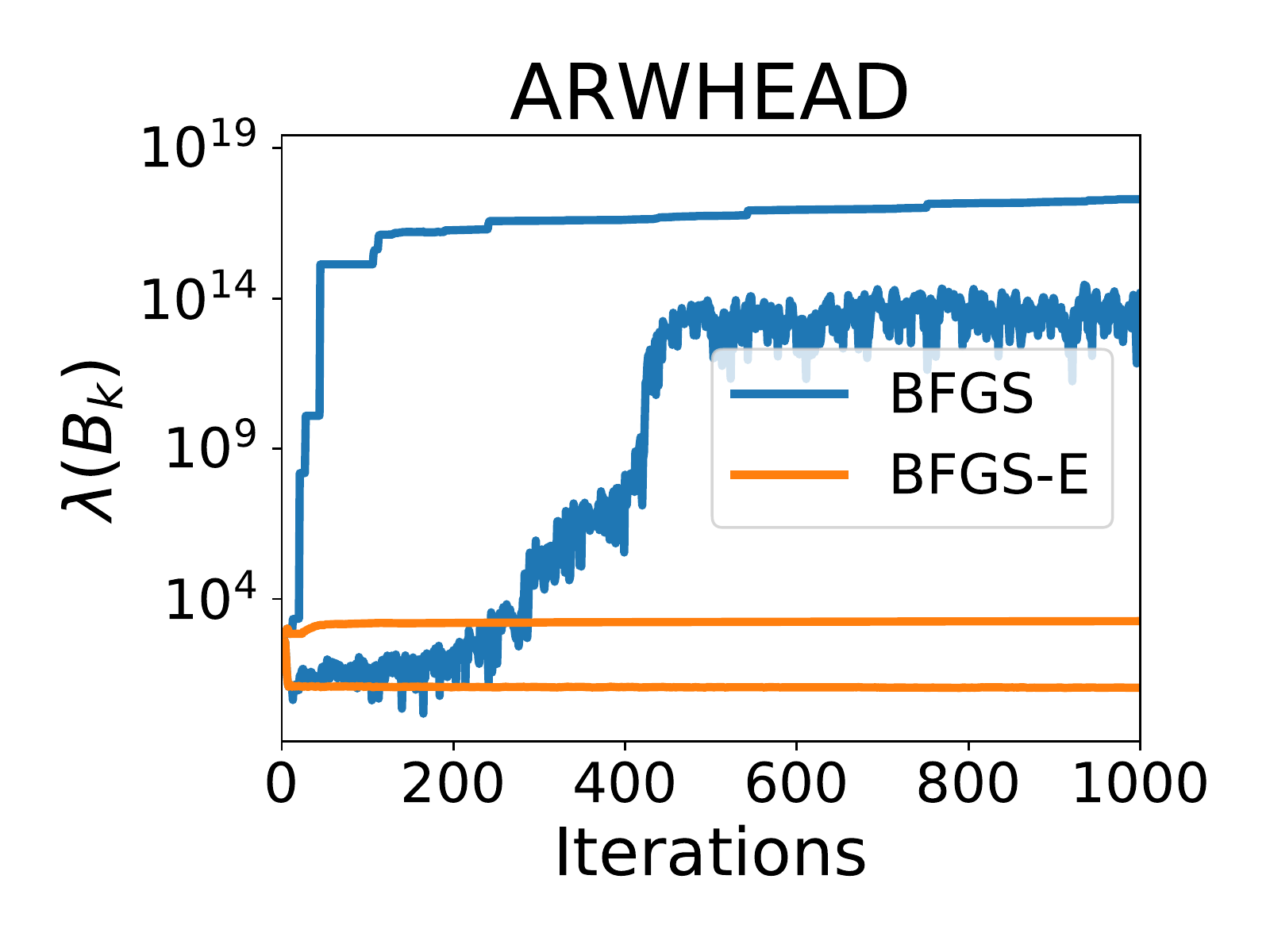}
    \caption{The condition number of the BFGS and BFGS-E matrices $\kappa(H_k)$ against the number of iterations (left) and the smallest and largest eigenvalues of $B_k$ against the number of iterations (right) on the \texttt{ARWHEAD} problem. The final norm of the true gradient achieved by BFGS is approximately $1.97\mathrm{e}{-04}$.}
    \label{fig:BFGS-E on noisy ARWHEAD}
\end{figure}
It may seem surprising that in  Figure \ref{fig:BFGS-E on noisy ARWHEAD} the condition number of the BFGS matrix, $\kappa(H_k)$, decreases after having increased sharply. This can be explained by noting that as the iterates enter into the noisy regime, the difference in the gradient $y_k$ can be corrupted by noise, and we may have $s_k^T y_k \gg s_k^T \tilde{y}_k$. Thus, some of the eigenvalues of the BFGS matrix $B_k = H_k^{-1}$ will increase.  As the iteration proceeds, the rest of the eigenvalues become large too, hence decreasing the condition number.

Figure \ref{fig:obj for ARWHEAD} plots the optimality gap $\phi(x) - \phi^*$ vs the number of gradient evaluations performed for the four methods on the \texttt{ARWHEAD} problem. BFGS and L-BFGS do not achieve as high accuracy in the solution as their noise-tolerant counterparts because the deterioration in the Hessian approximation leads, at some point, to the generation of very small steps that severely limit the decrease in the objective function. The behavior of the methods on this problem is typical of what we have observed. In particular BFGS-E and L-BFGS-E  trigger lengthening of the curvature pairs prior to the point where BFGS and L-BFGS stagnate due to noise. This indicates that the lengthening procedure stabilizes the Hessian approximation prior to reaching this neighborhood.

\begin{figure}[htp]
    \centering
    \includegraphics[width=0.32\linewidth]{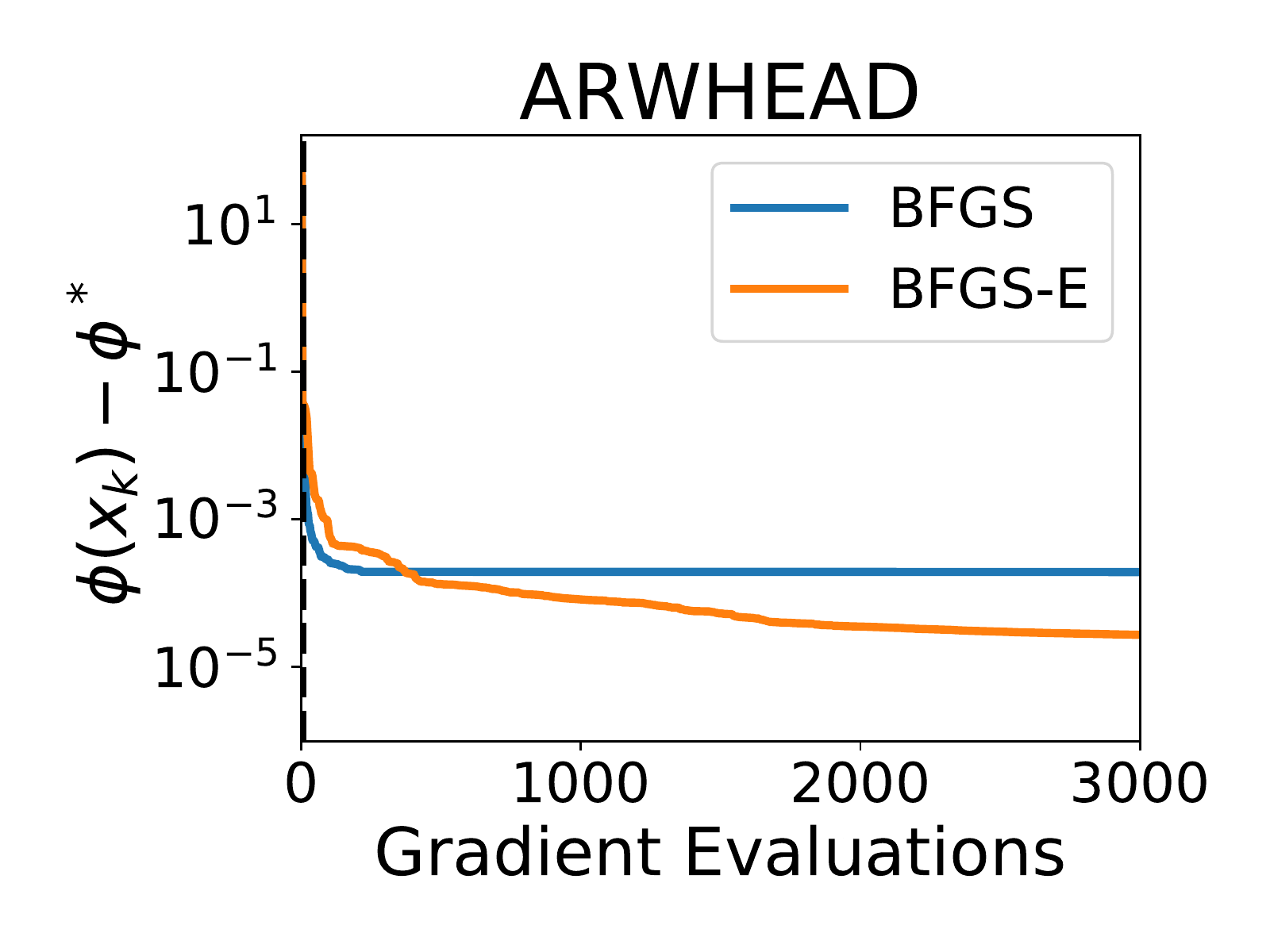}
    \includegraphics[width=0.32\linewidth]{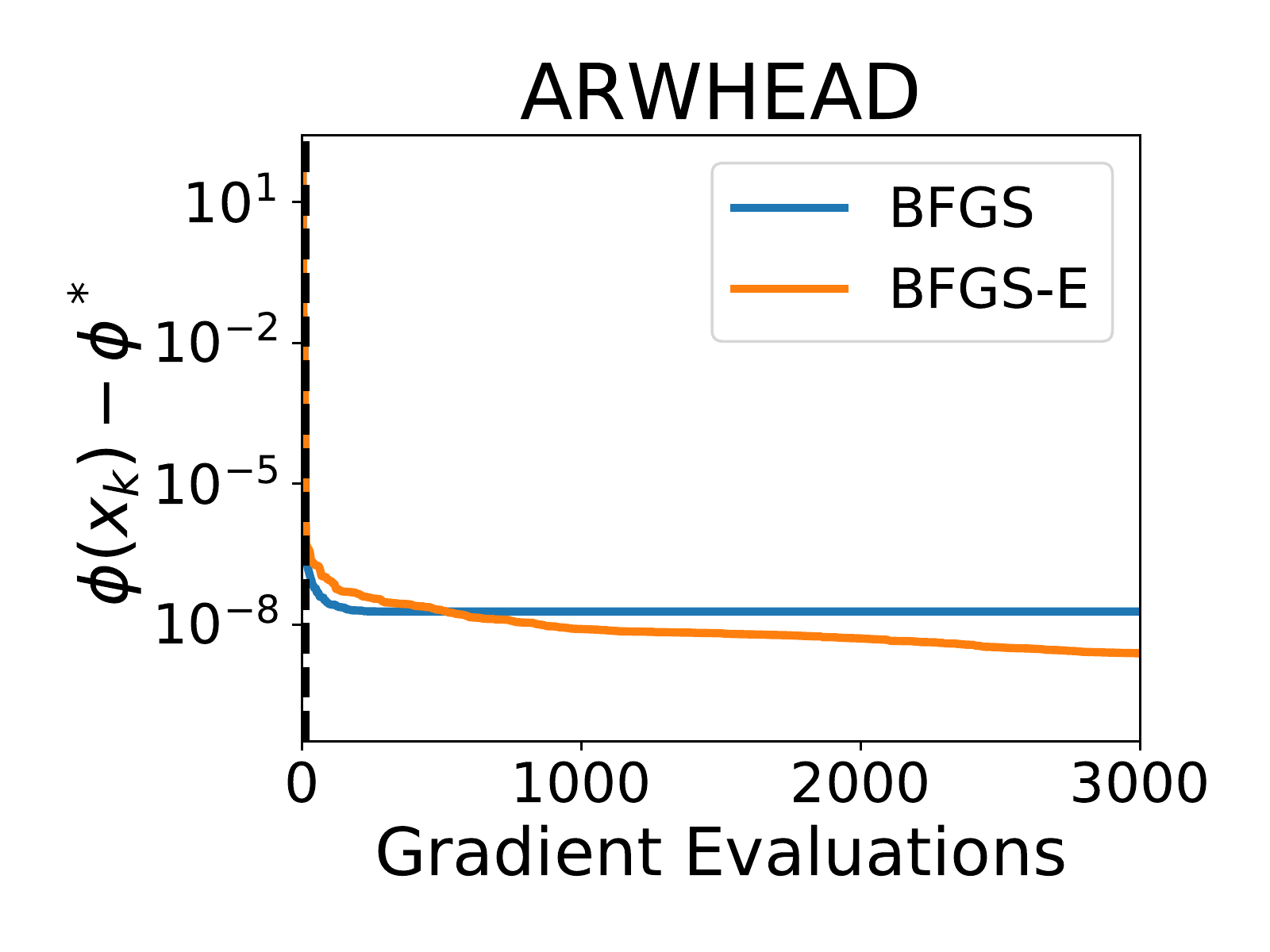}
    \includegraphics[width=0.32\linewidth]{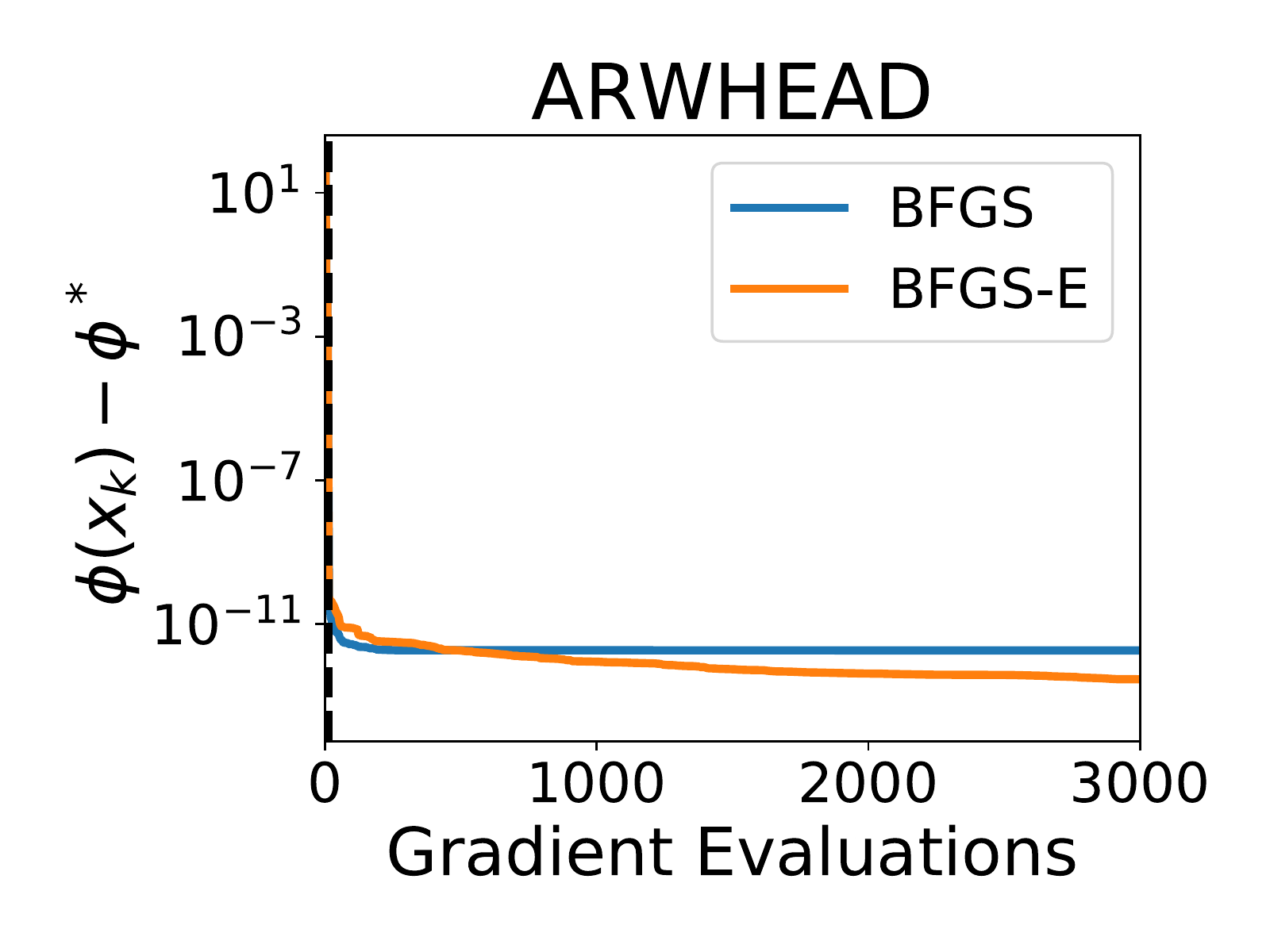}
    \includegraphics[width=0.32\linewidth]{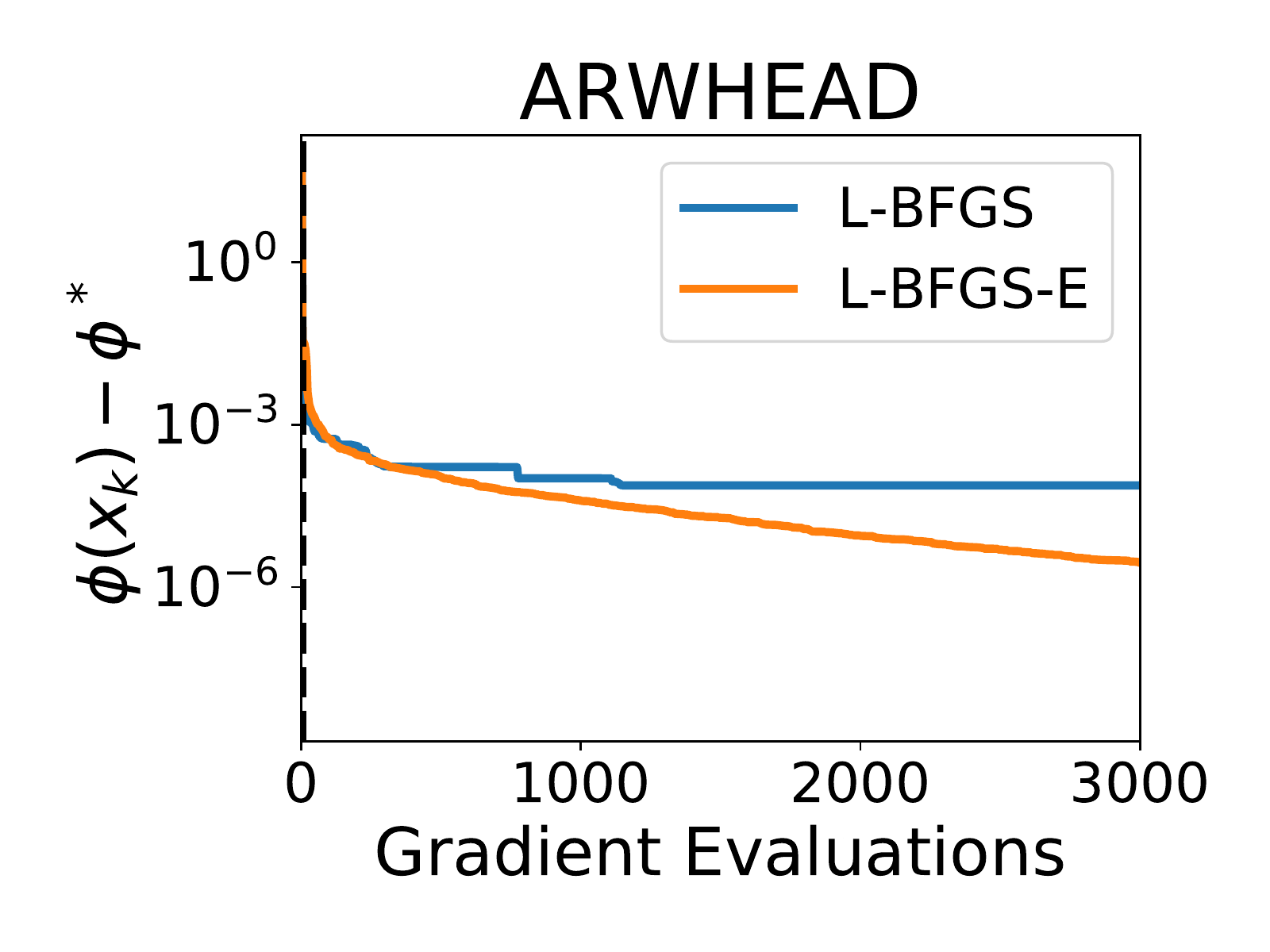}
    \includegraphics[width=0.32\linewidth]{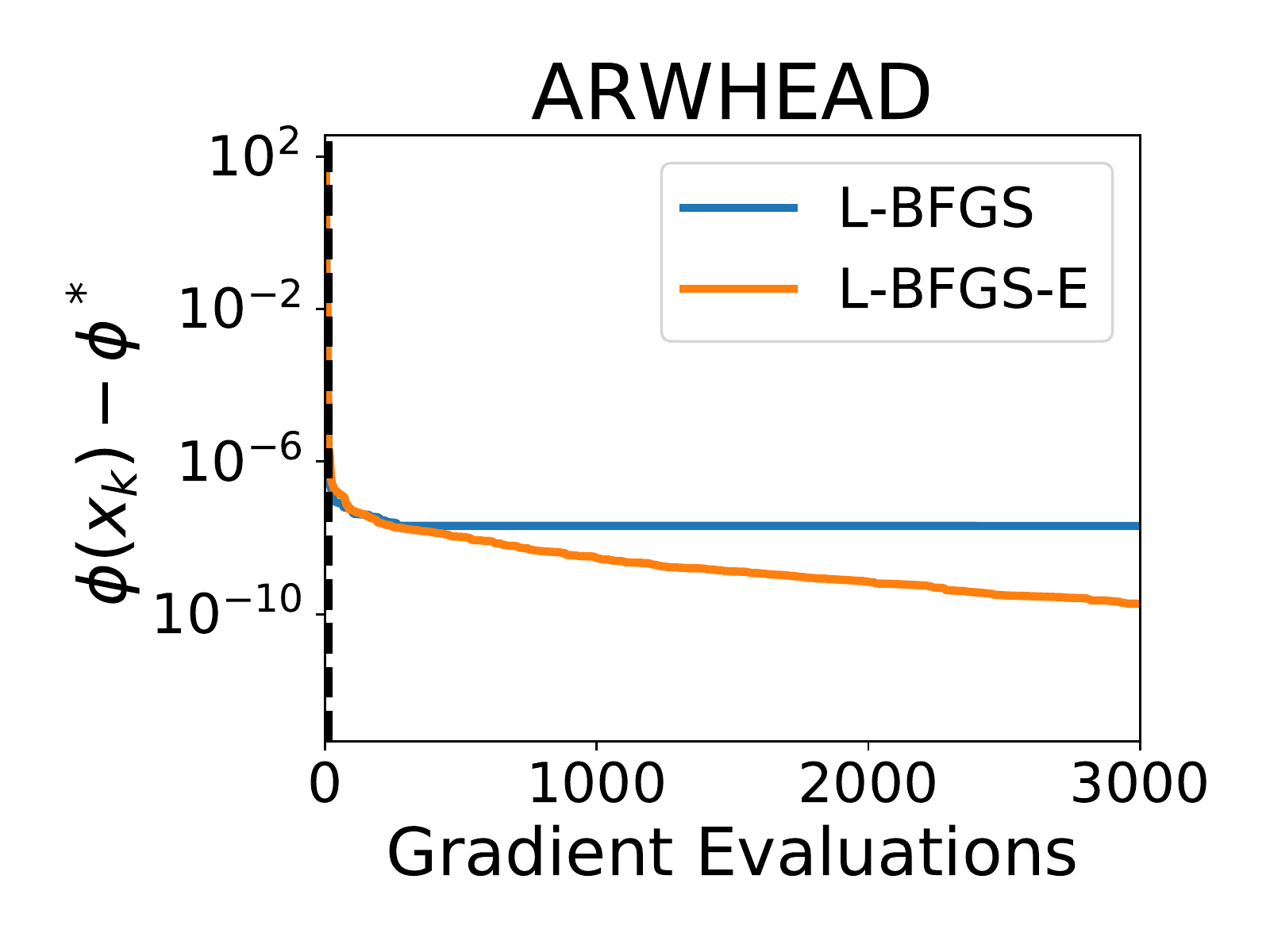}
    \includegraphics[width=0.32\linewidth]{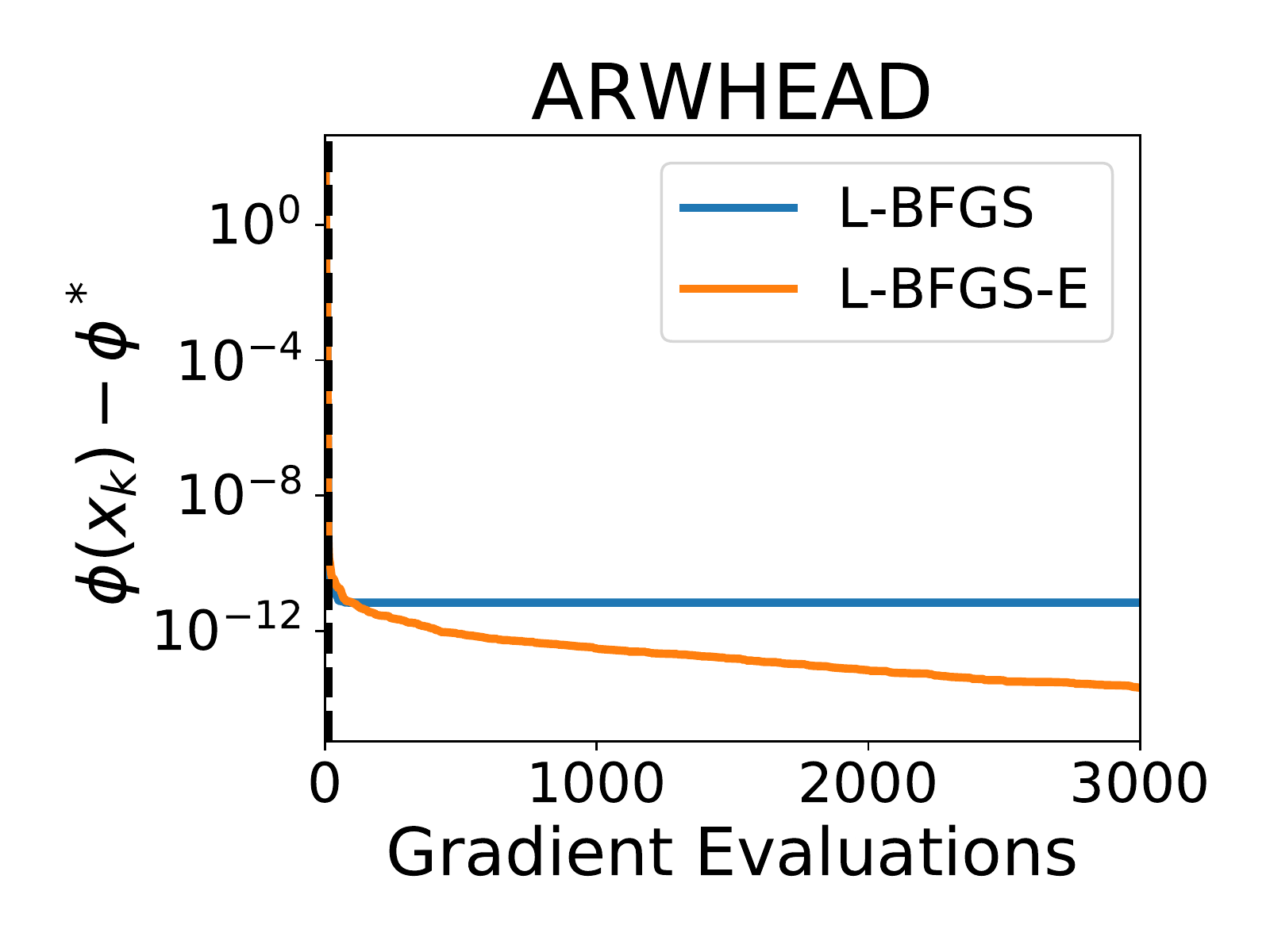}
    \caption{The true optimality gap $\phi(x_k) - \phi^*$ against the number of gradient evaluations on the \texttt{ARWHEAD} problem for $\epsilon_f=0$, and for the following gradient noise levels:  $\xi_g = 10^{-1}$ (left), $10^{-3}$ (middle), and $10^{-5}$ (right). The black dashed line denotes the iteration before the split phase becomes active.}
    \label{fig:obj for ARWHEAD}
\end{figure}

The lengthening procedure in our noise-tolerant algorithms comes at an additional computational cost. Figure~\ref{fig:grad evals for ARWHEAD} plots the cumulative number of gradient evaluations against the iteration count for the \texttt{ARWHEAD} problem. We observe that for BFGS or L-BFGS, the cumulative number of gradient evaluations is approximately equal to the number of iterations.
For the noise-tolerant methods, the number of gradient evaluations match the standard BFGS and L-BFGS methods until the split phase activates. Upon entering the split phase, we notice that the cost of each iteration is approximately $2-4$ gradient evaluations. This may be explained by the additional $1-3$ gradient evaluations necessary to find the appropriate $\beta_k$ that satisfies both the noise and Wolfe conditions, plus one gradient evaluation for triggering the split phase. This cost is  worthwhile in that it allows the algorithm to make progress in the noisy regime.

\begin{figure}
    \centering
    \includegraphics[width=0.45\linewidth]{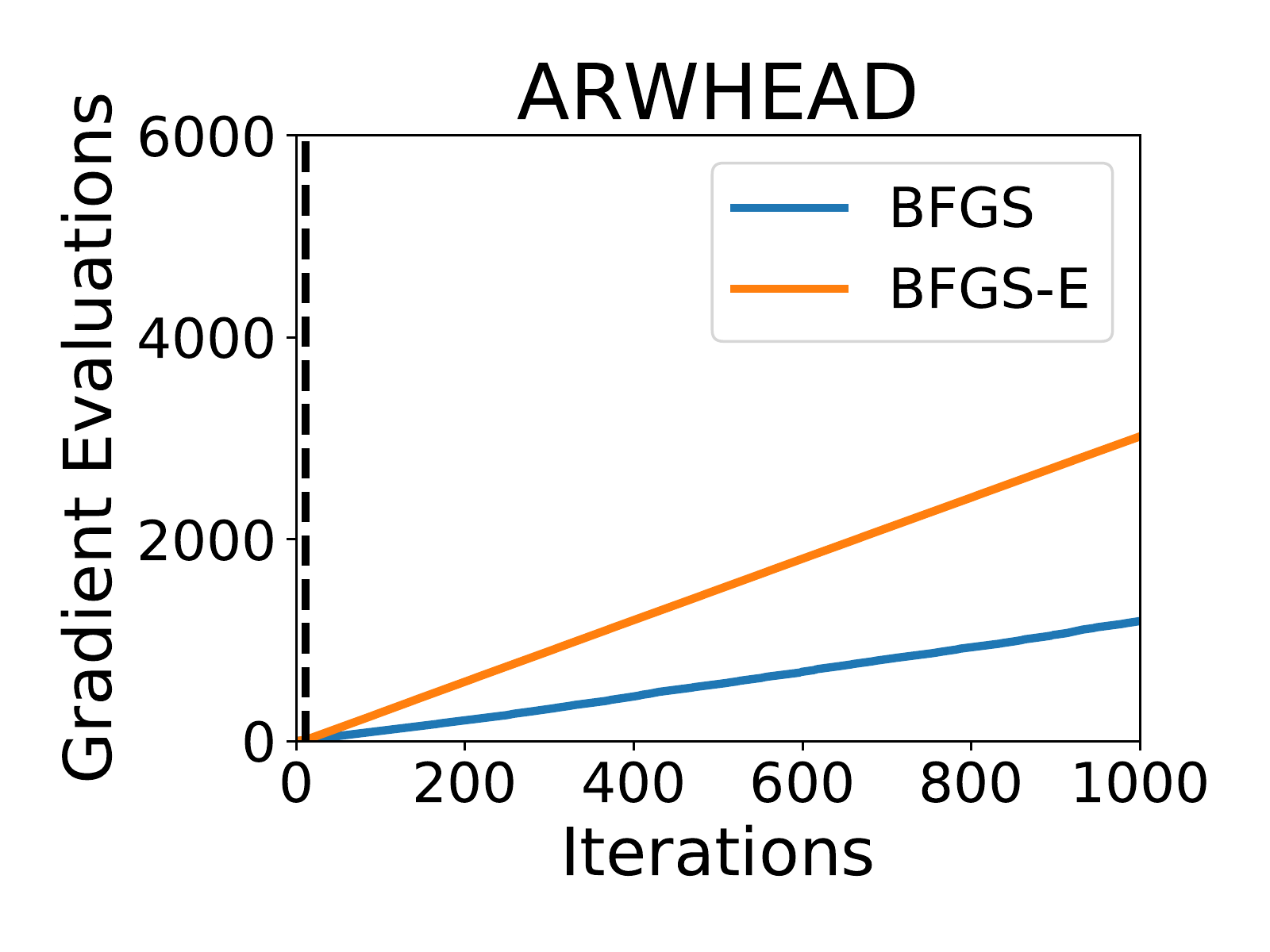}
    \includegraphics[width=0.45\linewidth]{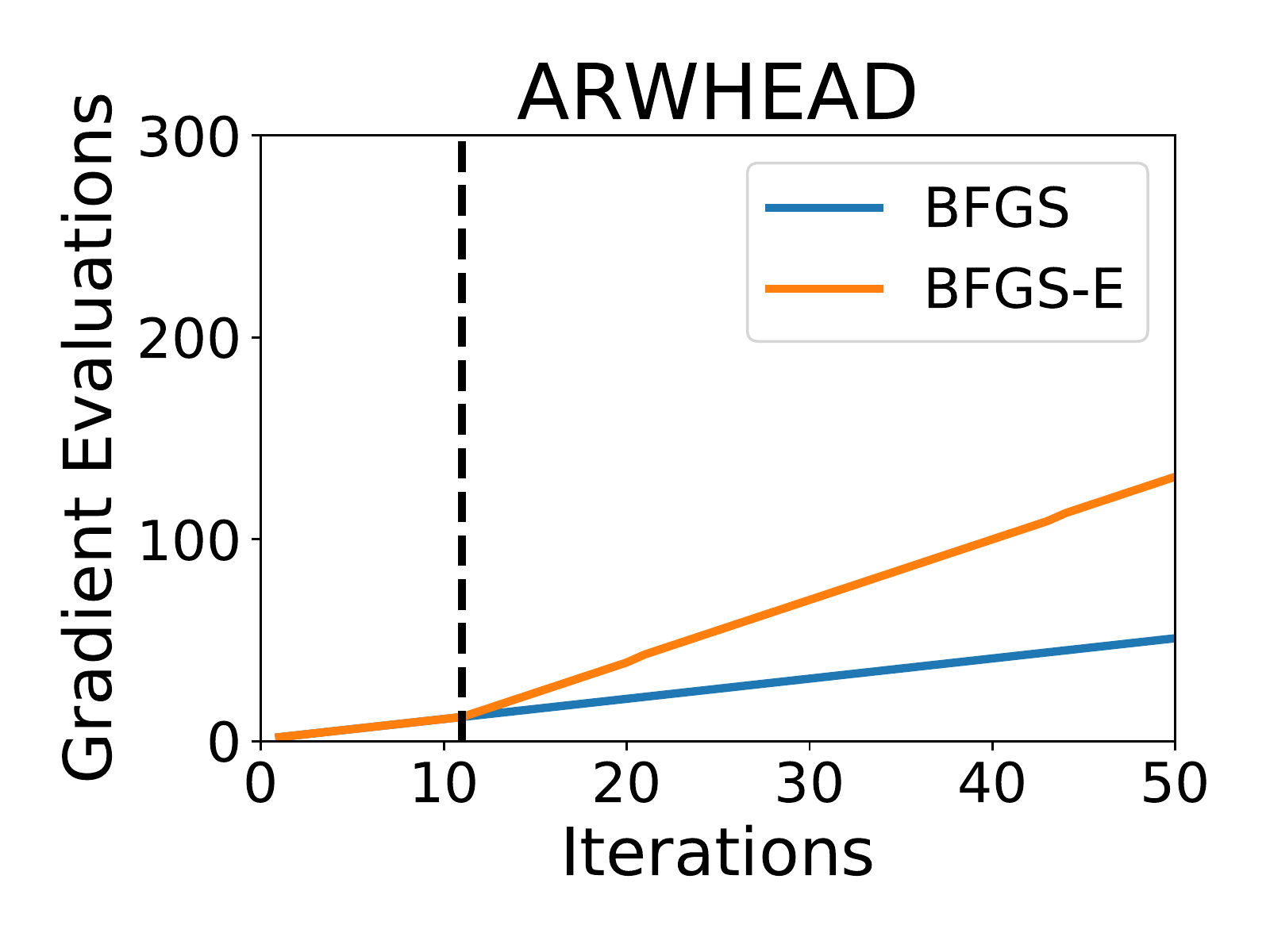}
    \caption{Cumulative number of gradient evaluations against the iteration count on the \texttt{ARWHEAD} problem for $\epsilon_f=0$ and $\xi_g = 10^{-3}$ for BFGS and BFGS-E. The left figure plots the long-term behavior and the right figure plots the short-term behavior. The results for L-BFGS and L-BFGS-E as well as different noise levels are similar. The black dashed line denotes the iteration before the split phase becomes active.}
    \label{fig:grad evals for ARWHEAD}
\end{figure}

\subsubsection{Sensitivity  with respect to $\epsilon_g$}

Since the bound on the gradient error $\epsilon_g$ may be estimated by an external procedure, it is possible for $\epsilon_g$ to be input incorrectly. In order to investigate the sensitivity of the choice of $\epsilon_g$, we consider both under- and overestimation of it. We perform the same experiment with a fixed $\xi_g = 10^{-3}$ and $\epsilon_f = 0$ but provide the algorithm an incorrect $\bar{\epsilon}_g = \omega \epsilon_g$ where $\omega \in \{\frac{1}{10}, \frac{1}{5}, \frac{1}{2}, 1, 2, 5, 10\}$. This is shown in Figure \ref{fig:sensitivity}. We plot only BFGS-E since L-BFGS-E performs similarly.

\begin{figure}
    \centering
    \includegraphics[width=0.32\textwidth]{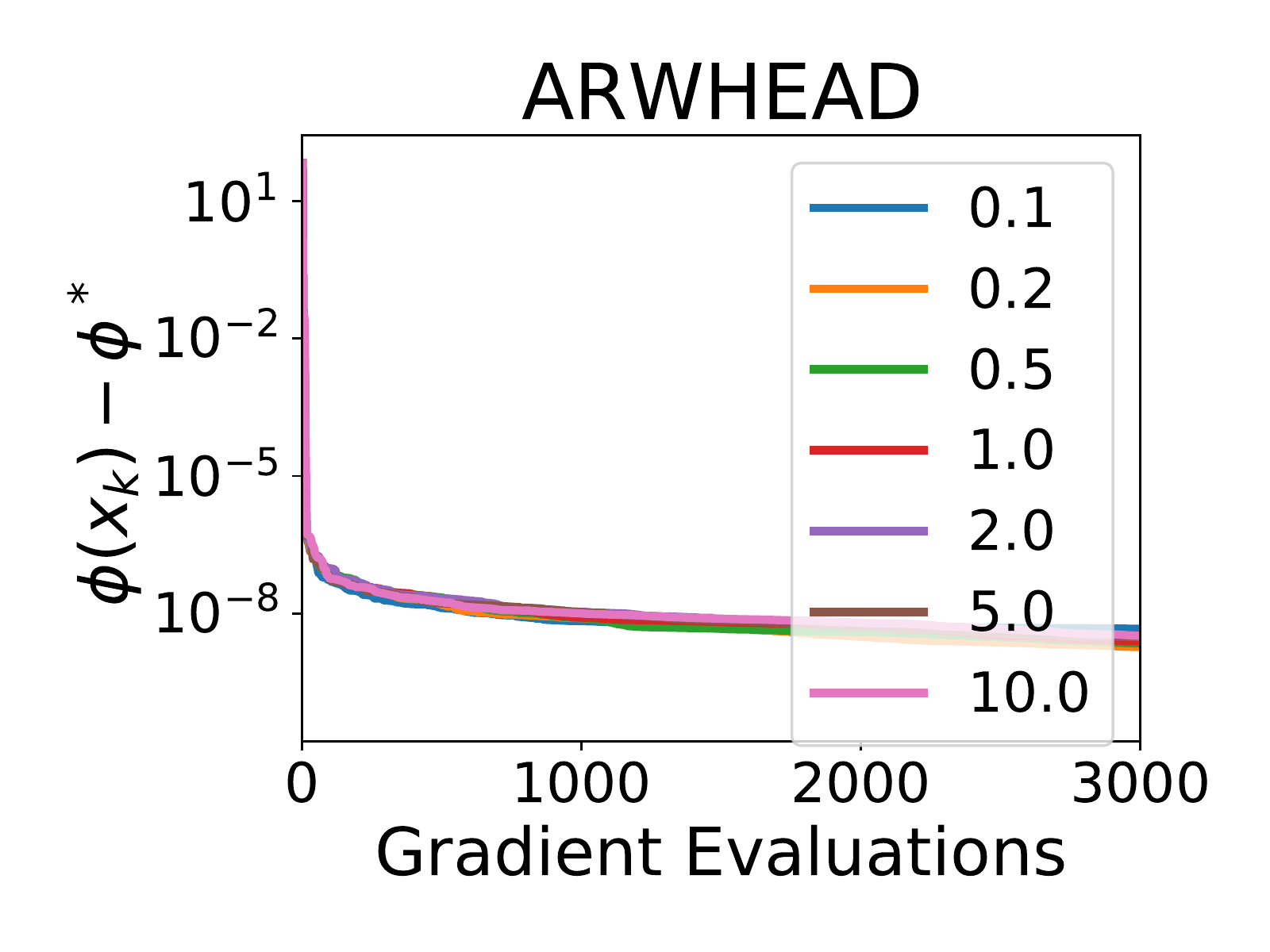}
    \includegraphics[width=0.32\textwidth]{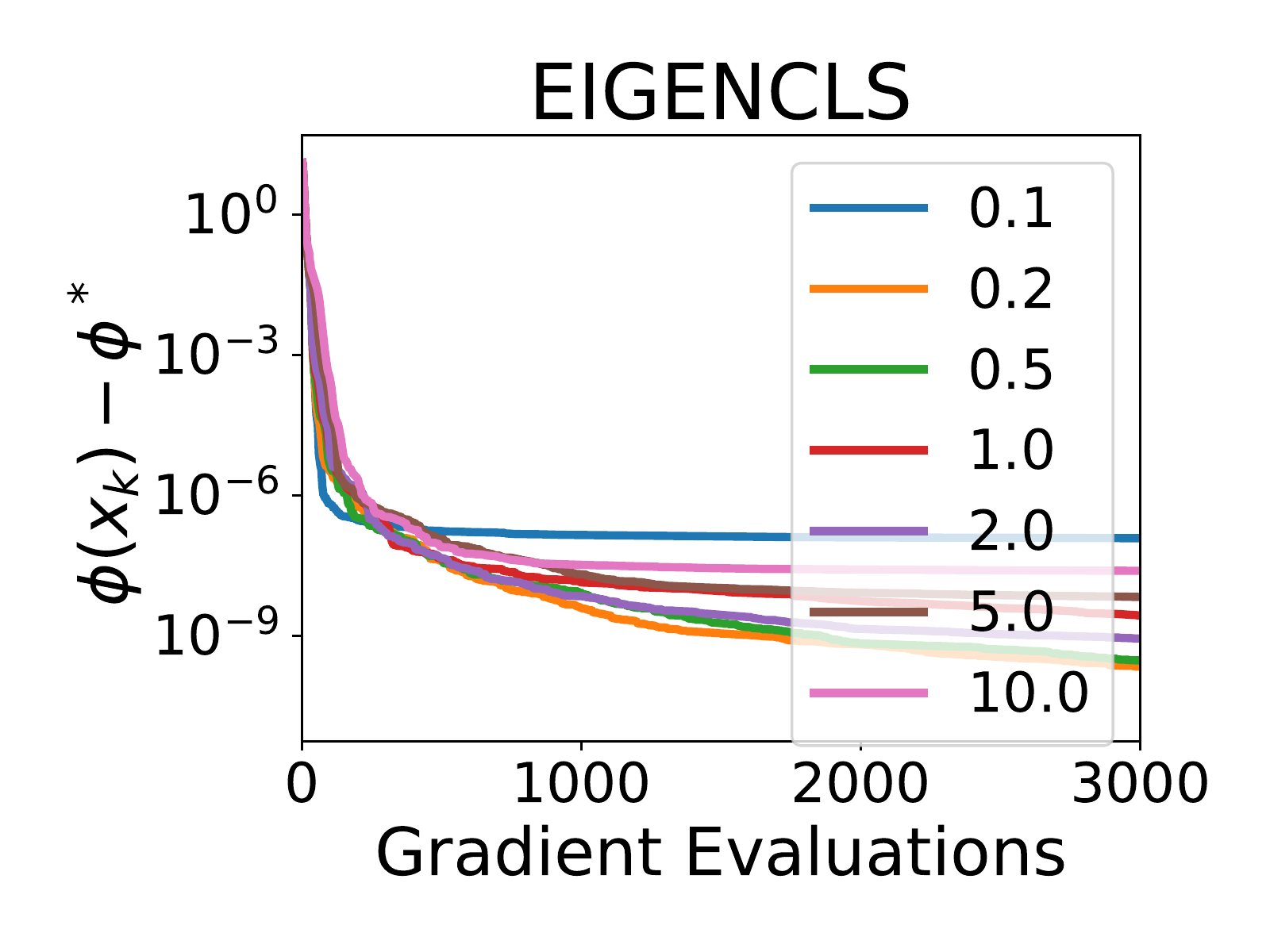}
    \includegraphics[width=0.32\textwidth]{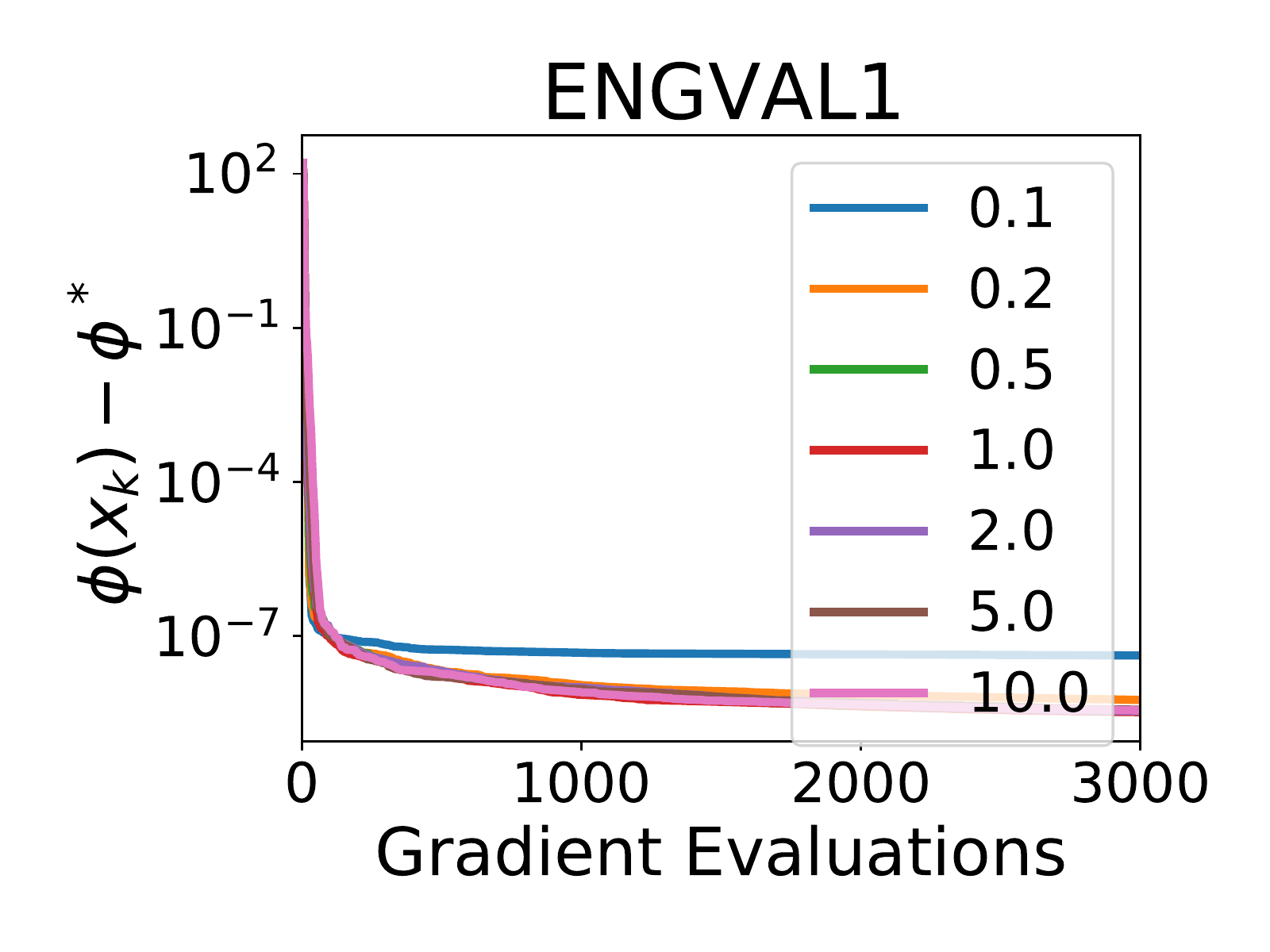}
    \caption{The true optimality gap $\phi(x_k) - \phi^*$ against the number of gradient evaluations applying BFGS-E on the \texttt{ARWHEAD}, \texttt{EIGENCLS}, and \texttt{ENGVAL1} problems for $\epsilon_f = 0$ and $\xi_g = 10^{-3}$ with incorrectly input $\bar{\epsilon}_g = \omega \epsilon_g$ for $\omega \in \{\frac{1}{10}, \frac{1}{5}, \frac{1}{2}, 1, 2, 5, 10\}$.}
    \label{fig:sensitivity}
\end{figure}

If the noise is severely underestimated, it can lead to early stagnation of the algorithm due to corruption of the BFGS matrix. If the noise is severely overestimated, then the collection of non-local curvature information can result in slower progress towards the solution. Overall, the method tolerates overestimation better than underestimation of the noise level, as one would expect.


\subsection{Experiments with Intermittent Noise in the Gradient}  \label{onlyi}

In some applications, the noise level in the gradient evaluation may fluctuate rather than remain constant. One special case is that of intermittent noise. 
%
To simulate it, we will set $\xi_f=0$ and let the noise level in the gradient $\xi_g$ alternate between 0 and a fixed nonzero value every $N_{\text{noise}}$ iterations, where $N_{\text{noise}} \in \{10, 25, 50\}$. We show representative results using the \texttt{CRAGGLVY} problem in Figure \ref{fig:intermit CRAGGLVY (no skip)}. The \texttt{CRAGGLVY} problem is chosen because it requires more than $N_{\text{noise}}$ iterations to solve, whereas the \texttt{ARWHEAD} problem can be solved in under 25 iterations. The noise-tolerant  methods are provided the value of $\epsilon_g$ but not $N_{\text{noise}}$.

BFGS suffers the most from the inclusion of intermittent noise, and is unable to recover quickly enough to make progress even when there is no noise in the gradient. In contrast, BFGS-E is able to continue to make progress immediately once noise is diminished since the BFGS matrix $H_k$ is less corrupted by noise and therefore able to take advantage of the non-noisy gradient; see in particular the stepwise behavior on the top right plot in Figure \ref{fig:intermit CRAGGLVY (no skip)}. L-BFGS-E performs even better than BFGS-E, but note that standard L-BFGS is quite effective when the noise toggles every $N_{\text{noise}}=25$ or 50 iterations. This is because, if the number $N_{\text{noise}}$ of non-noisy iterations is larger than the memory $t = 10$, L-BFGS is able to forget all noise-contaminated curvature pairs, then it is able to recover and make progress. 

\begin{figure}[htp]
    \centering
    \includegraphics[width=0.32\linewidth]{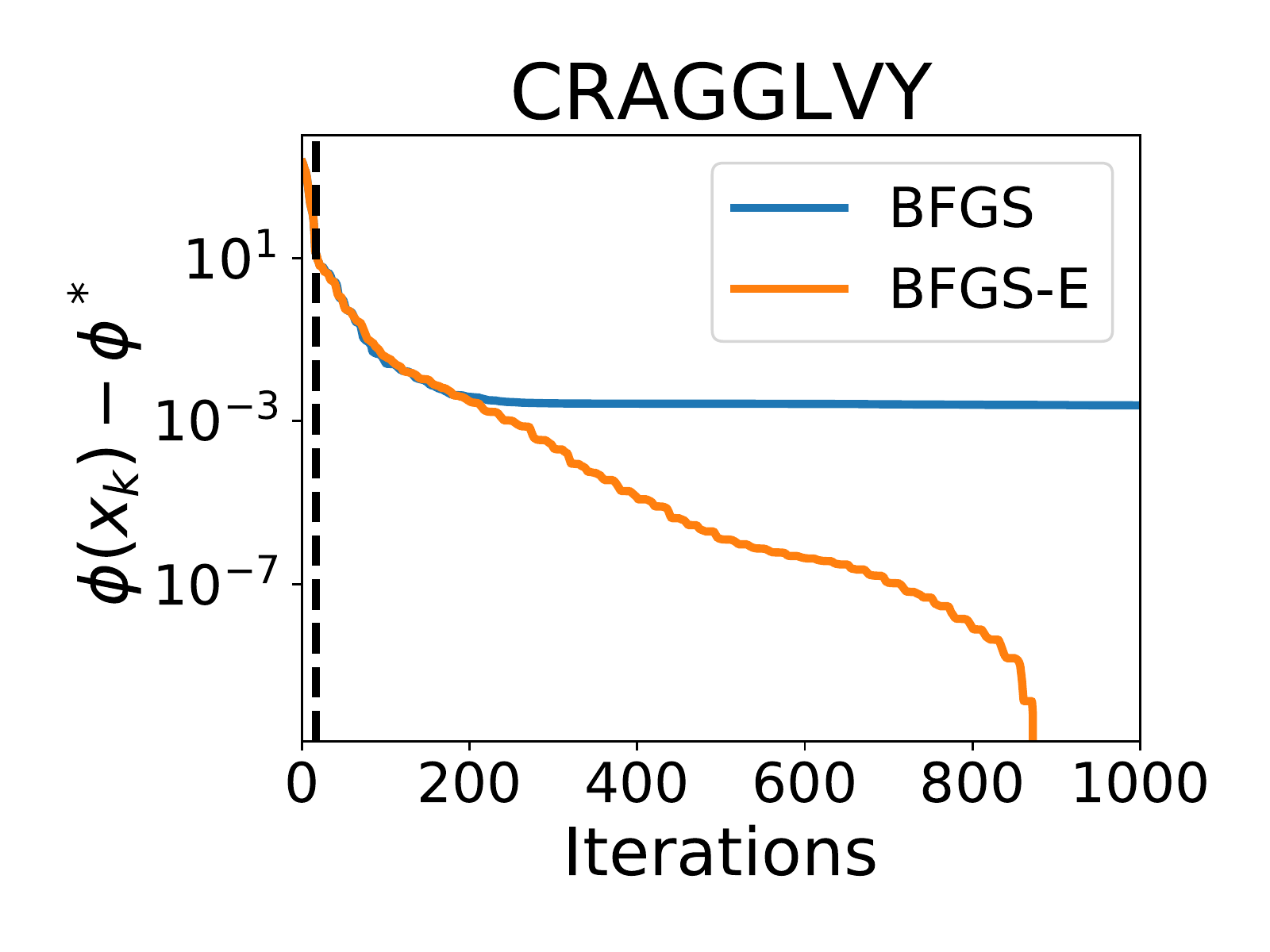}
    \includegraphics[width=0.32\linewidth]{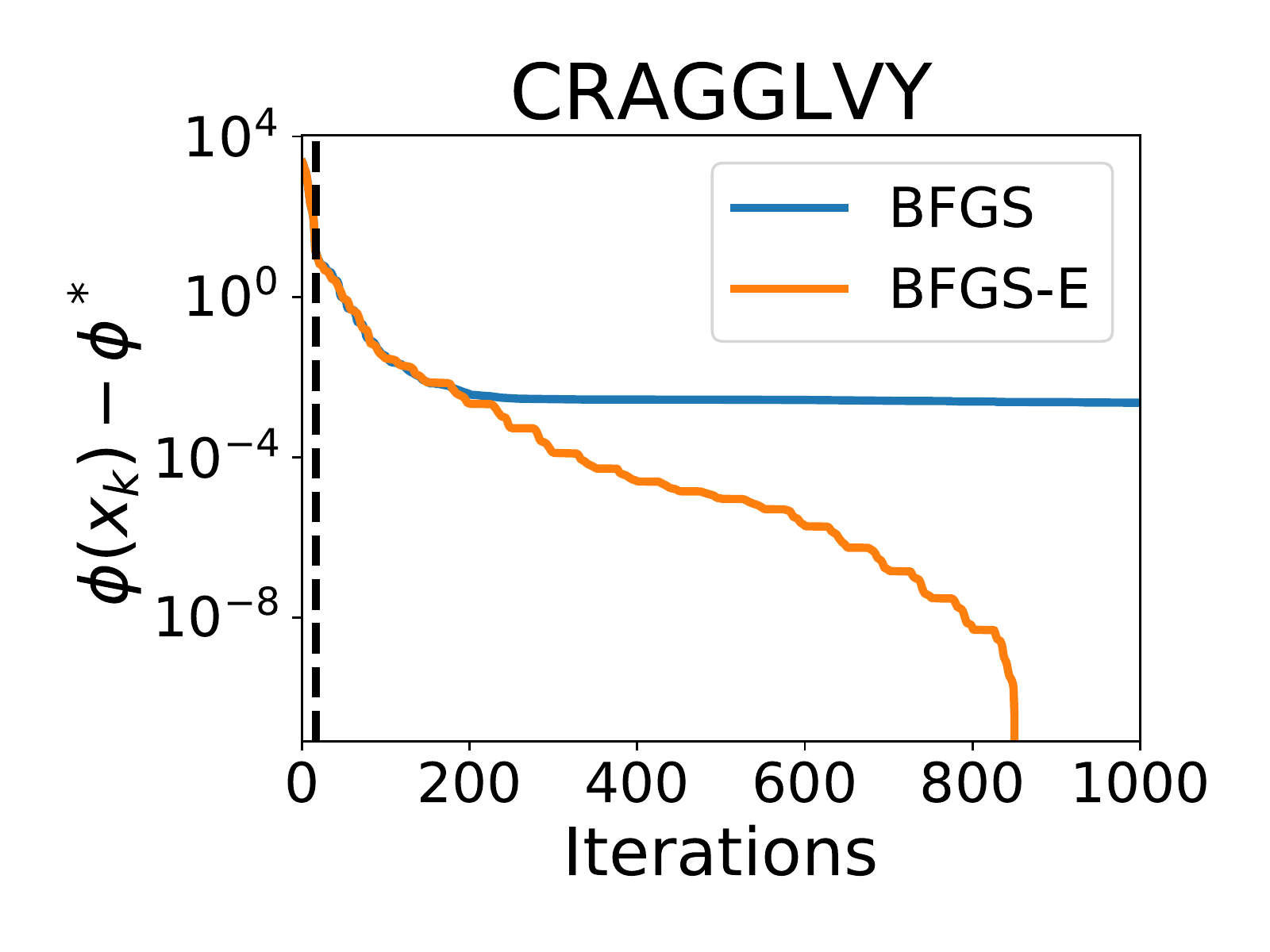}
    \includegraphics[width=0.32\linewidth]{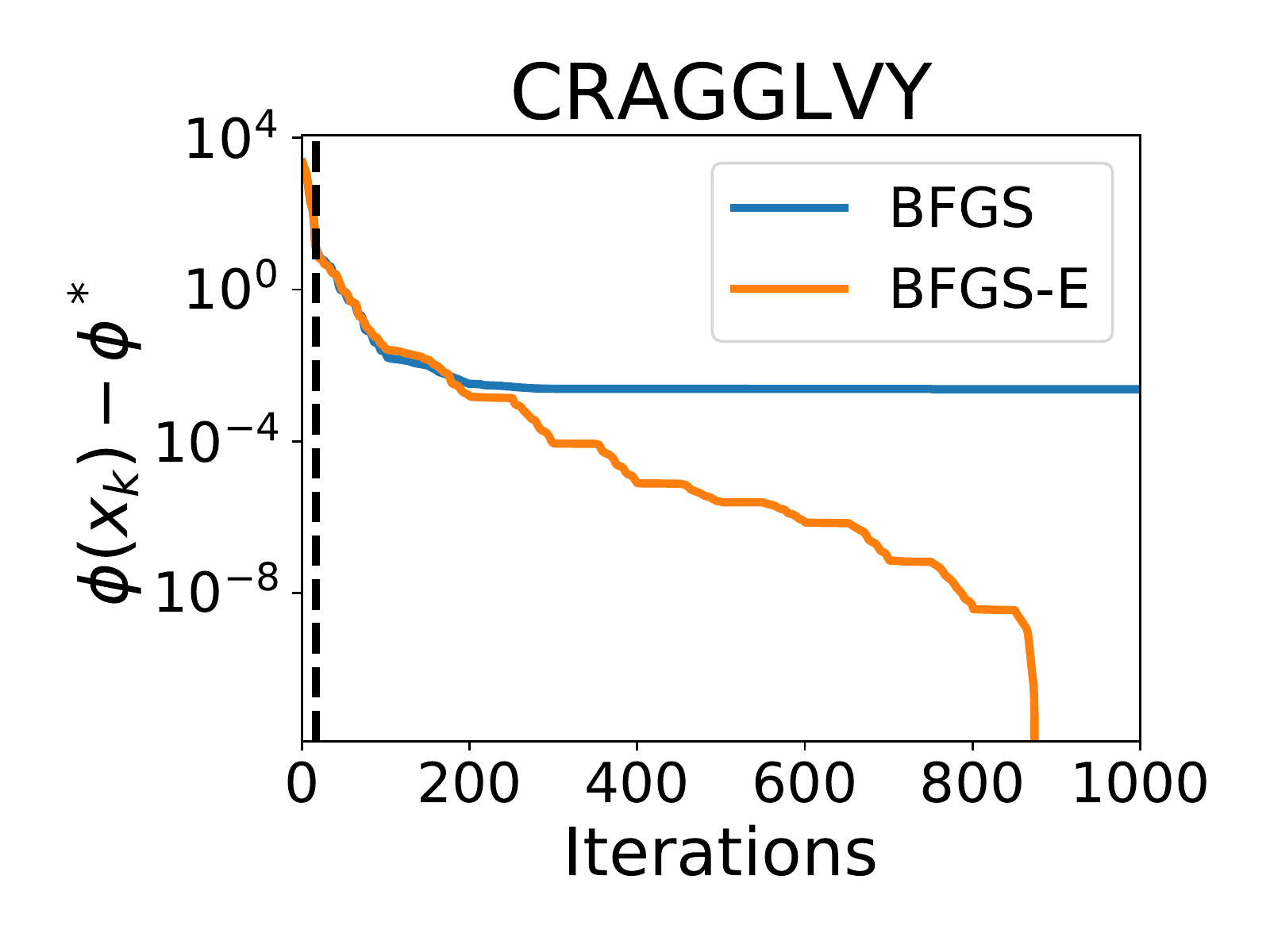}
    \includegraphics[width=0.32\linewidth]{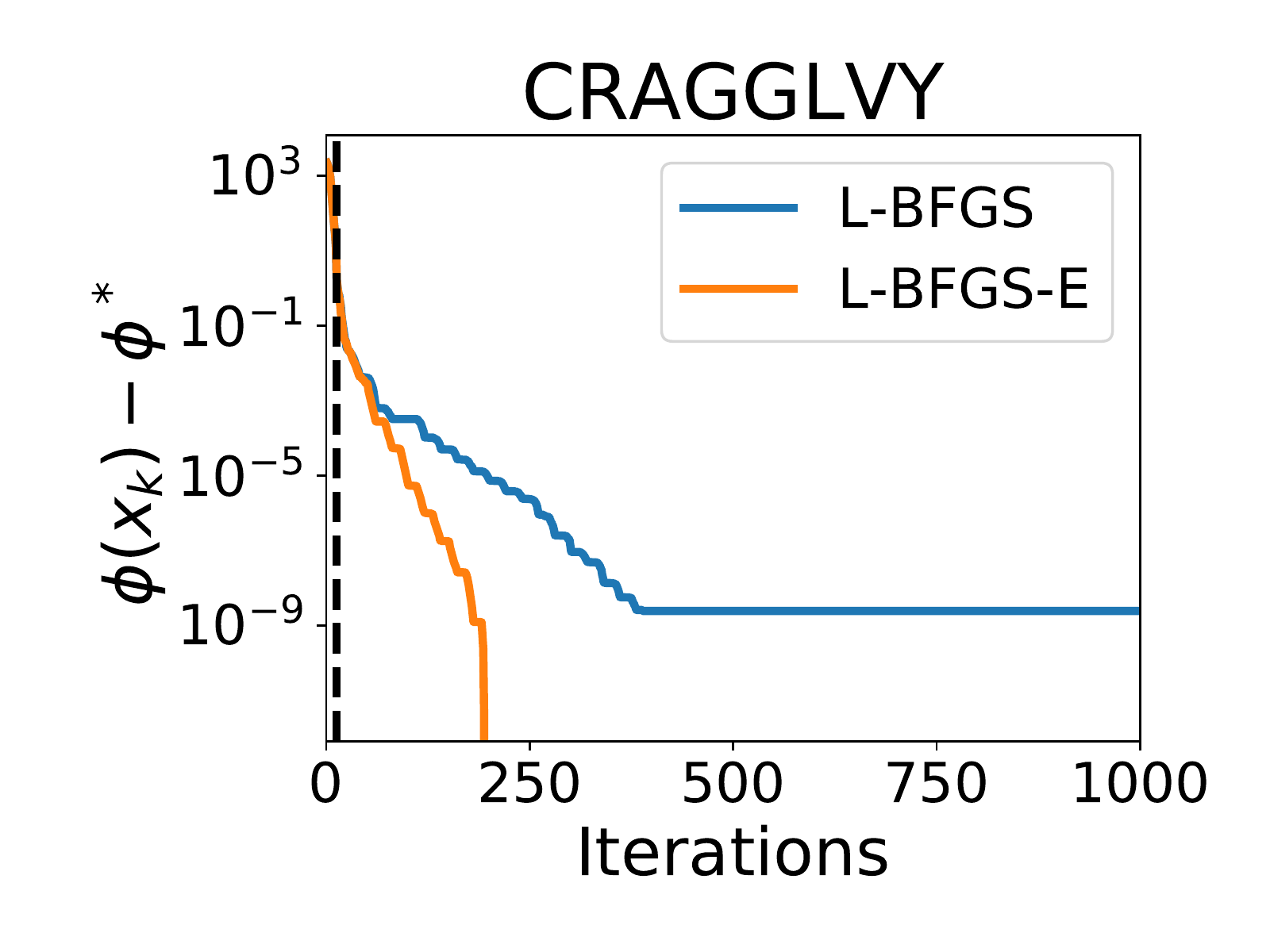}
    \includegraphics[width=0.32\linewidth]{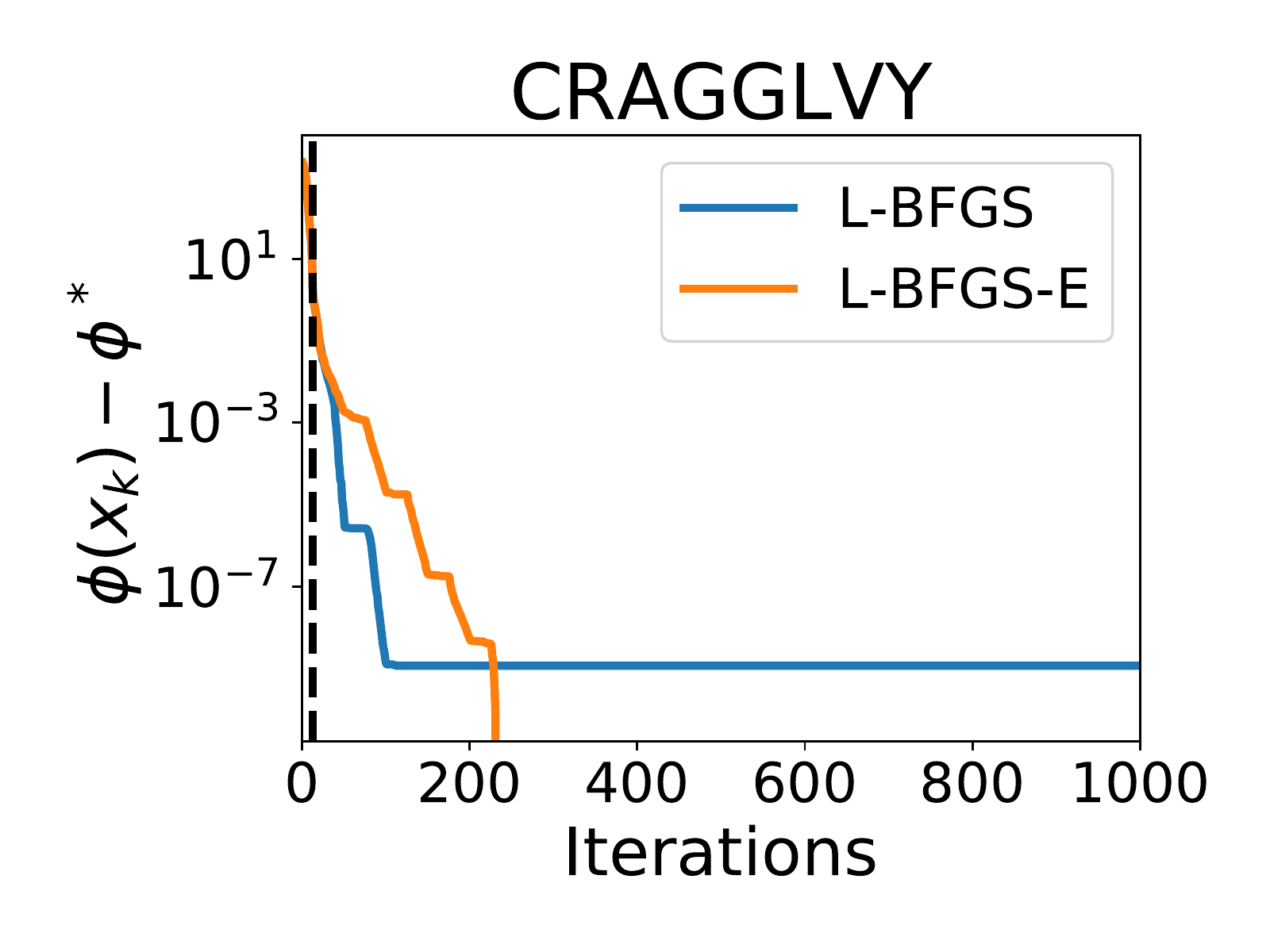}
    \includegraphics[width=0.32\linewidth]{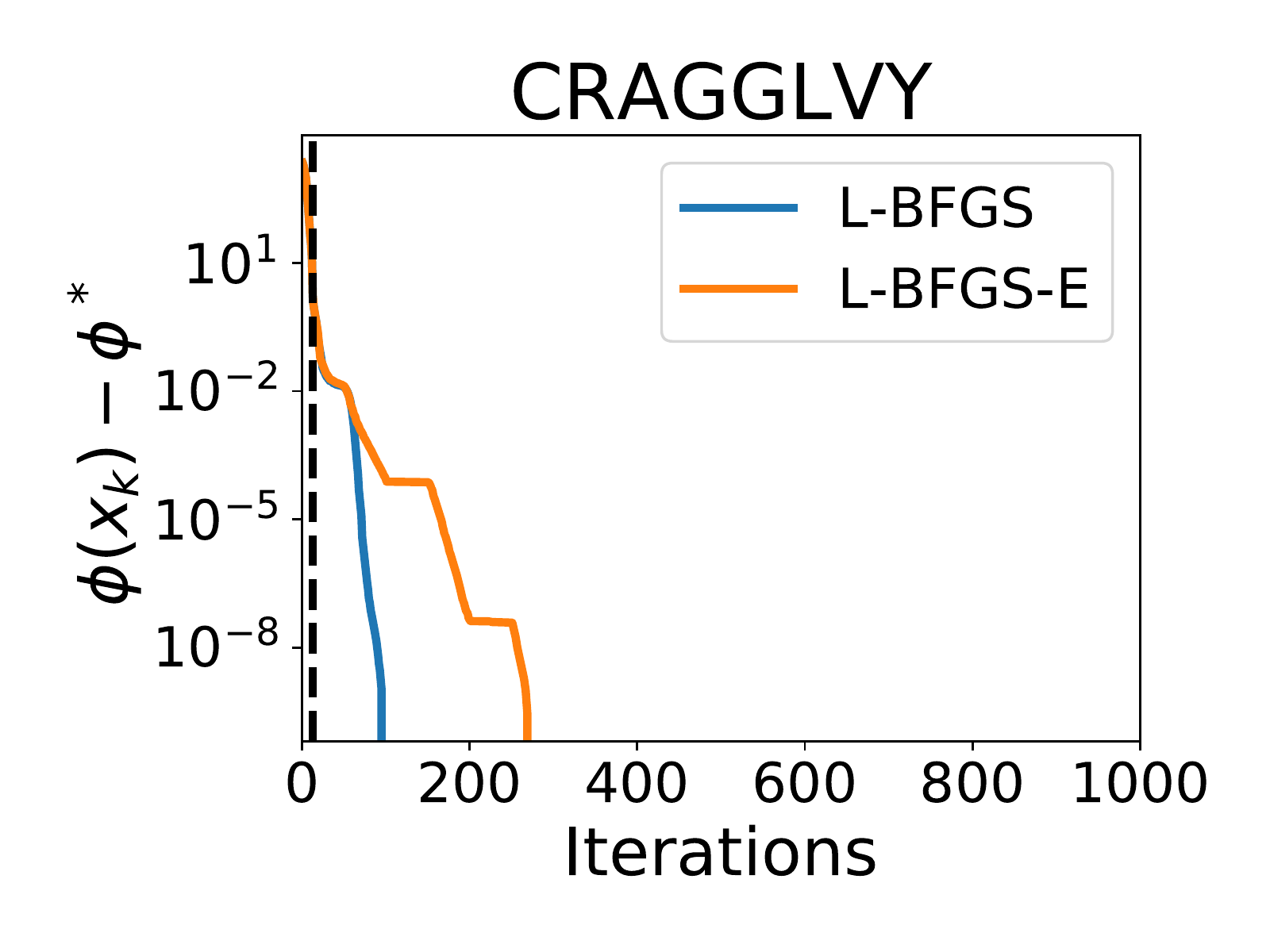}
    \caption{Intermittent Noise. Optimality gap $\phi(x_k) - \phi^*$ against the number of iterations on the \texttt{CRAGGLVY} problem. $\xi_f=0$ and $\xi_g$ alternates between 0 and with $\xi_g = 10^{-1}$ every $N_{\text{noise}}$ iterations. Results for $N_{\text{noise}} = 10$ (left), $25$ (middle), and $50$ (right). The black dashed line denotes the iteration before the split phase becomes active.}
    \label{fig:intermit CRAGGLVY (no skip)}
\end{figure}

\subsection{Comparison Against Methods that Employ Update Skipping}\label{skip}
We now consider the performance of the BFGS (Skips) and L-BFGS (Skips) methods for constant and intermittent noise. The appeal of skipping the update when the quality of the correction pair is not assured  is its economy, since the lengthening procedure involves additional gradient evaluations. 

In Figures \ref{fig:obj for ENGVAL1} and \ref{fig:obj for EIGENCLS}, we compare the performance of BFGS (Skips) and L-BFGS (Skips) to both the standard and extended methods when there is uniform constant noise in the gradient. We report the results for the \texttt{ENGVAL1} and \texttt{EIGENCLS} problems, which are of easy and medium difficulty, respectively. We chose these problems to demonstrate nuanced cases where fixing the BFGS matrix is not sufficient for making fast progress to the solution.

\begin{figure}
    \centering
    \includegraphics[width=0.32\linewidth]{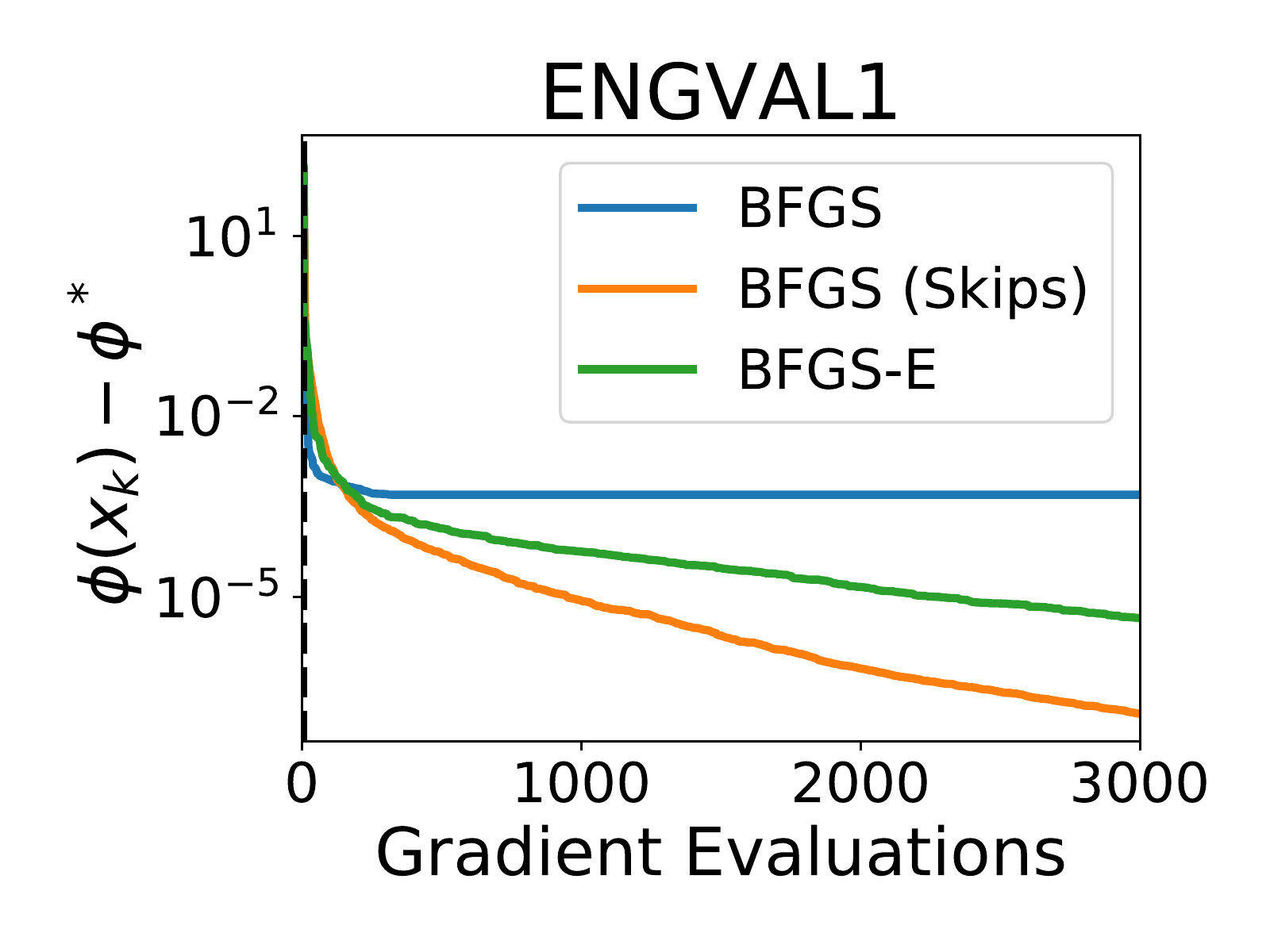}
    \includegraphics[width=0.32\linewidth]{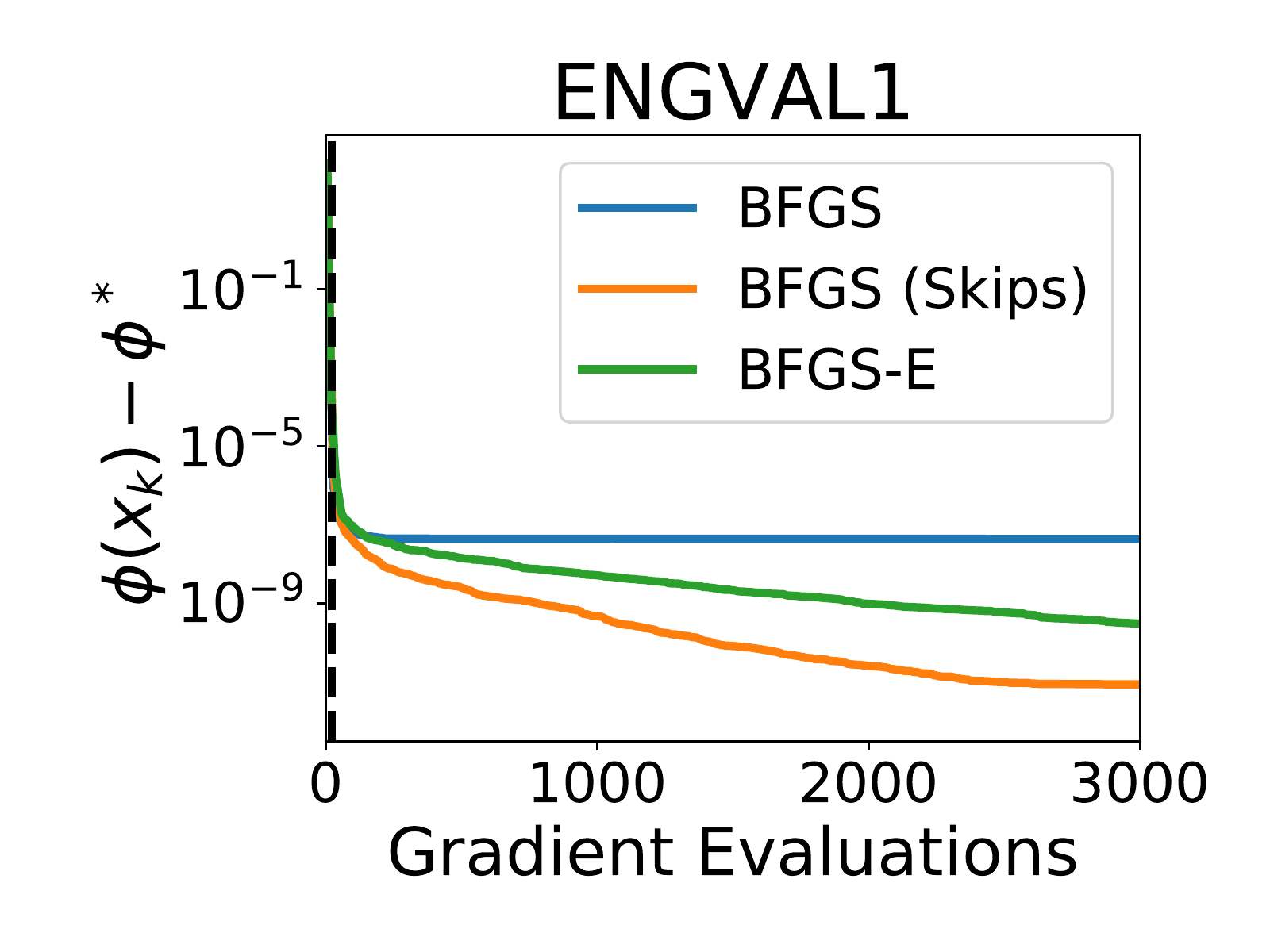}
    \includegraphics[width=0.32\linewidth]{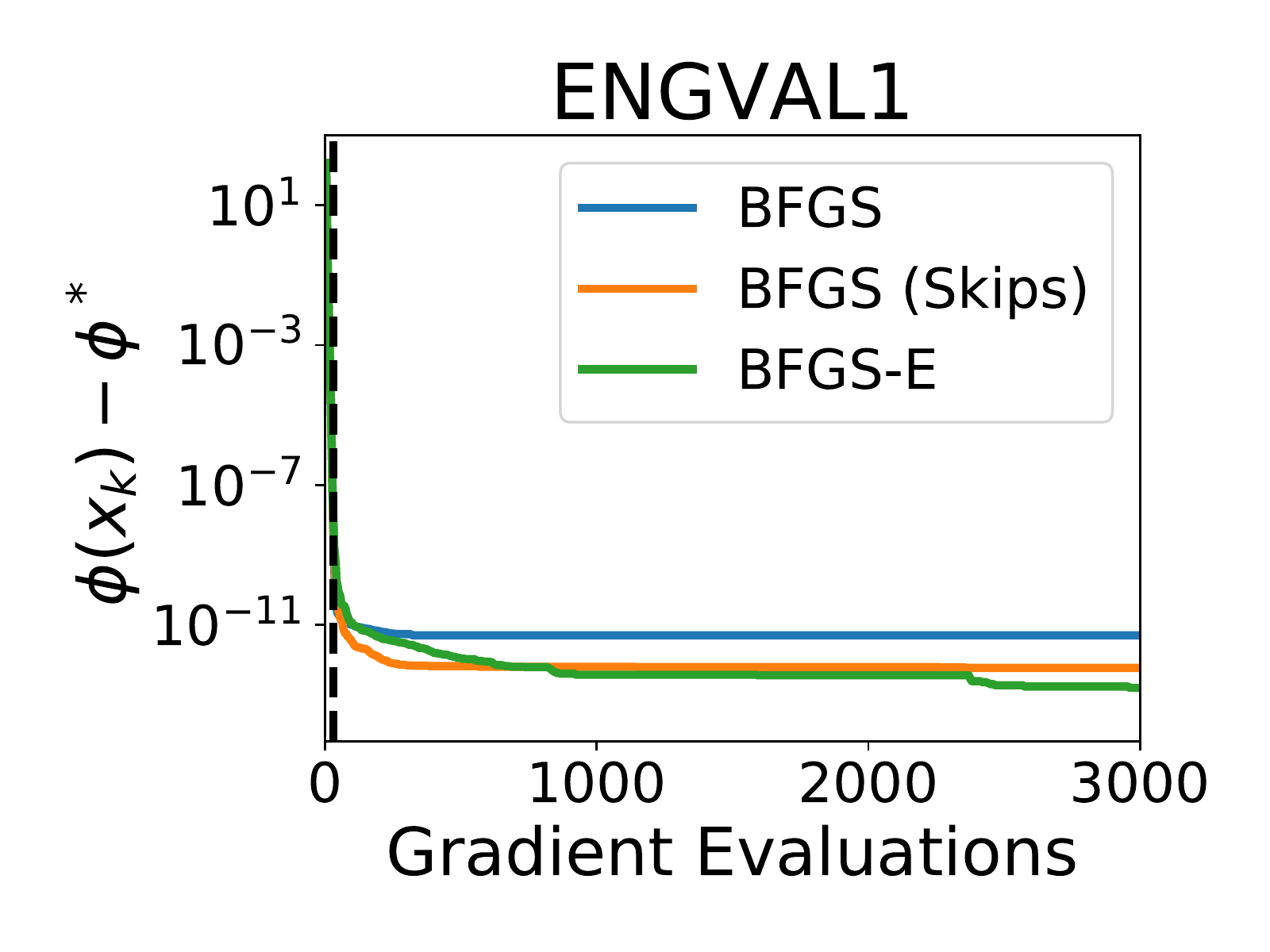}
    \includegraphics[width=0.32\linewidth]{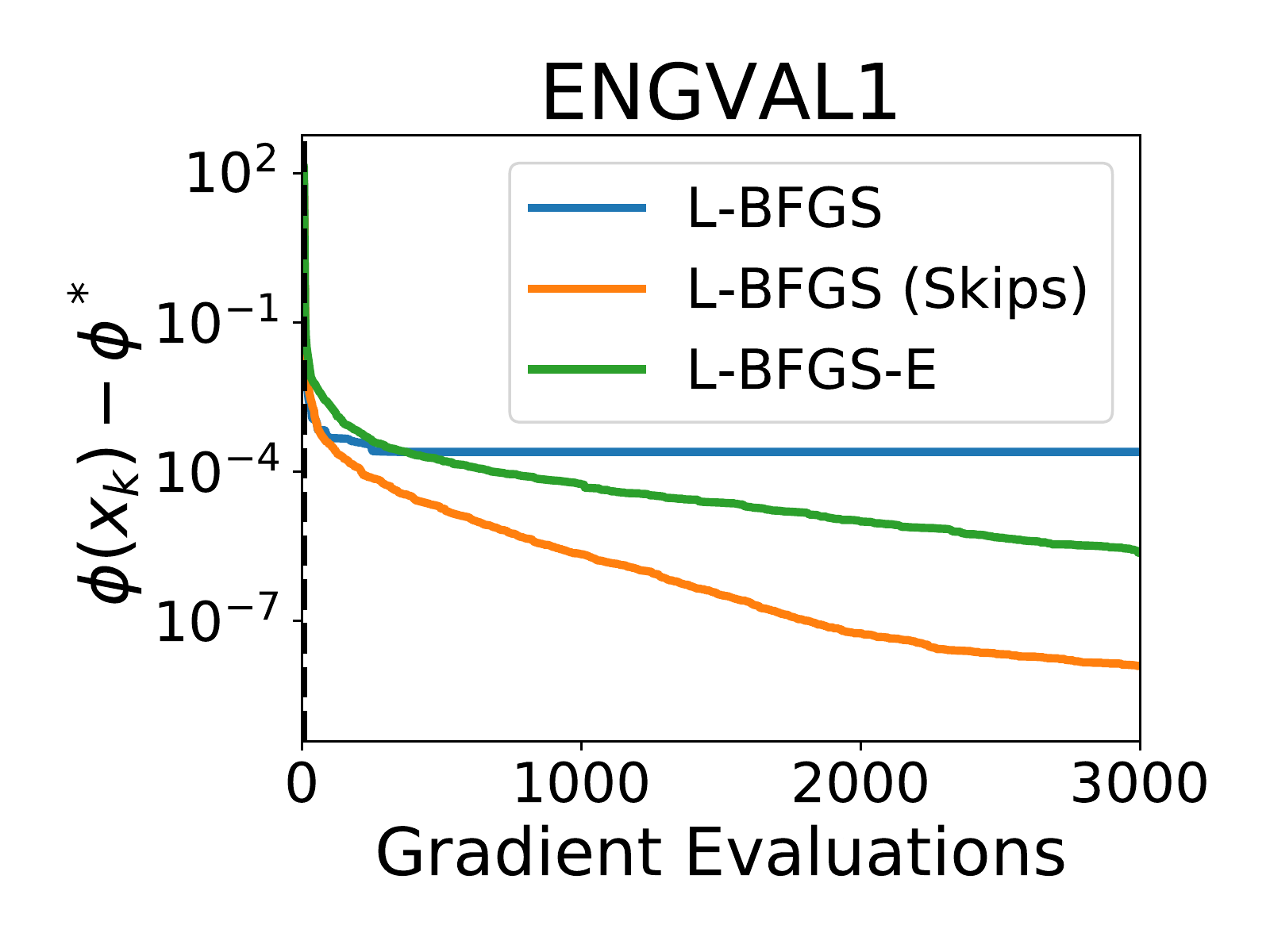}
    \includegraphics[width=0.32\linewidth]{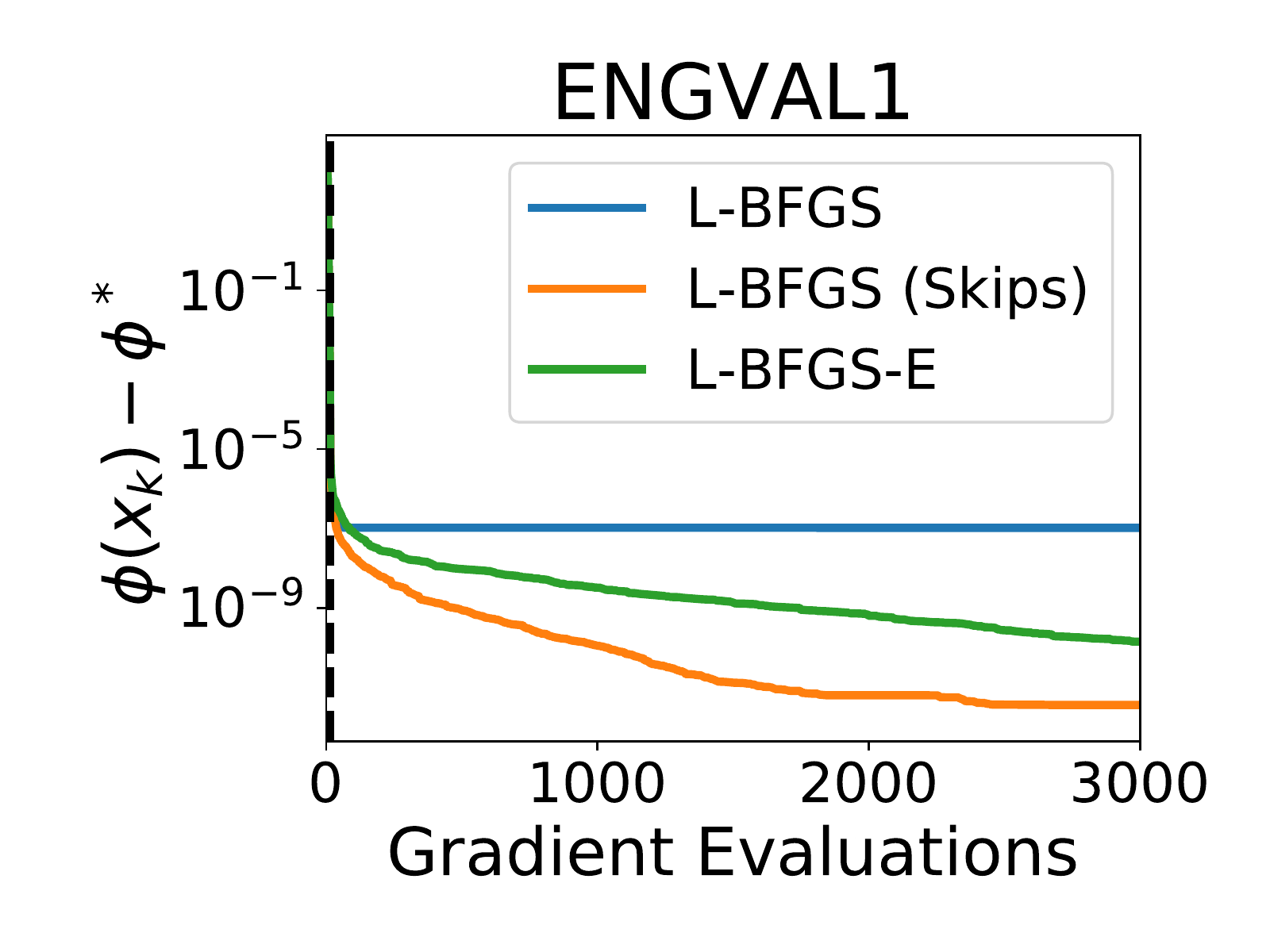}
    \includegraphics[width=0.32\linewidth]{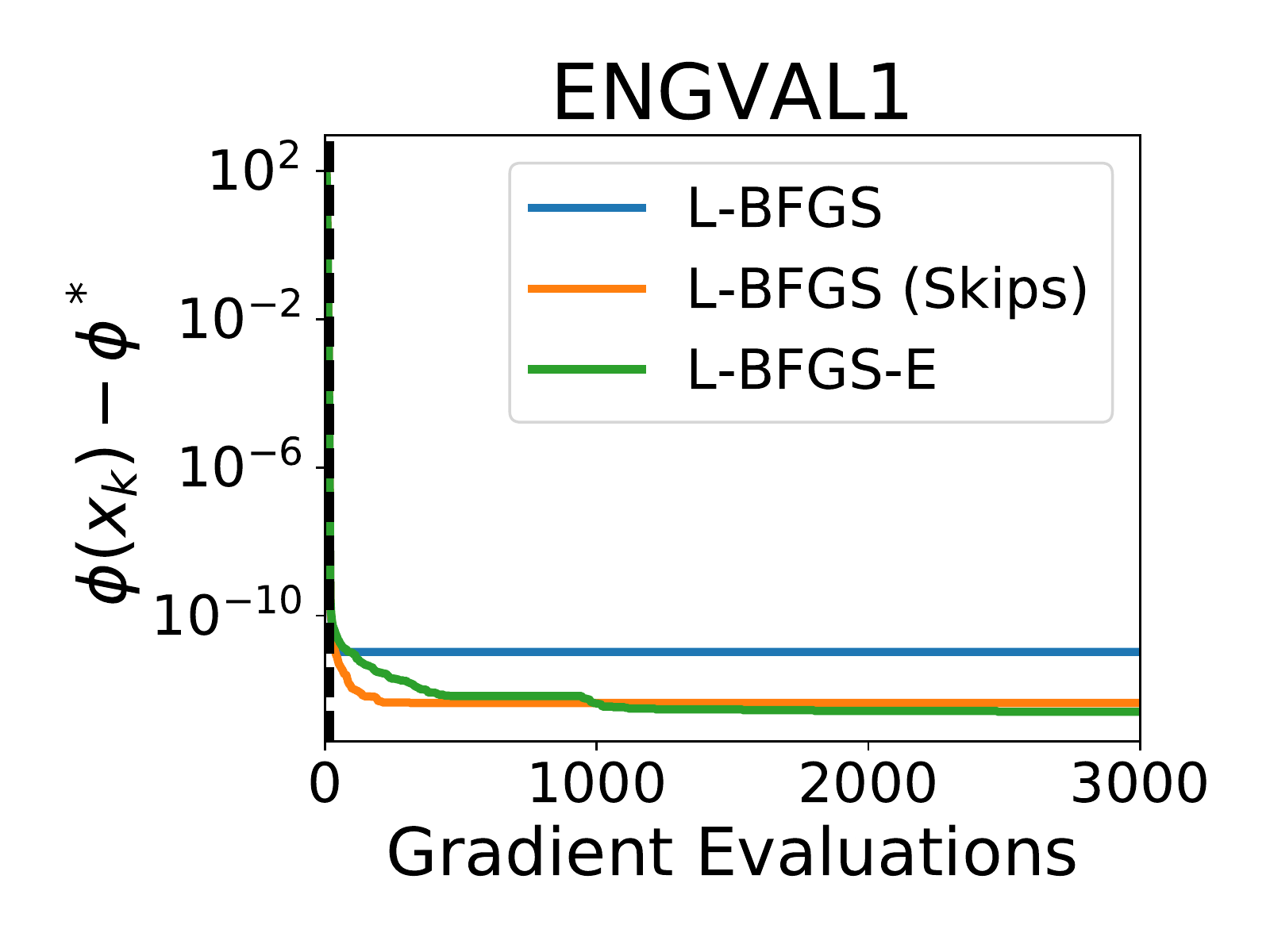}
    \caption{The true optimality gap $\phi(x_k) - \phi^*$ against the number of gradient evaluations on the \texttt{ENGVAL1} problem for $\epsilon_f=0$, and for the following gradient noise levels:  $\xi_g = 10^{-1}$ (left), $10^{-3}$ (middle), and $10^{-5}$ (right). The black dashed line denotes the iteration before the split phase becomes active.}
    \label{fig:obj for ENGVAL1}
\end{figure}

\begin{figure}[htp]
    \centering
    \includegraphics[width=0.32\linewidth]{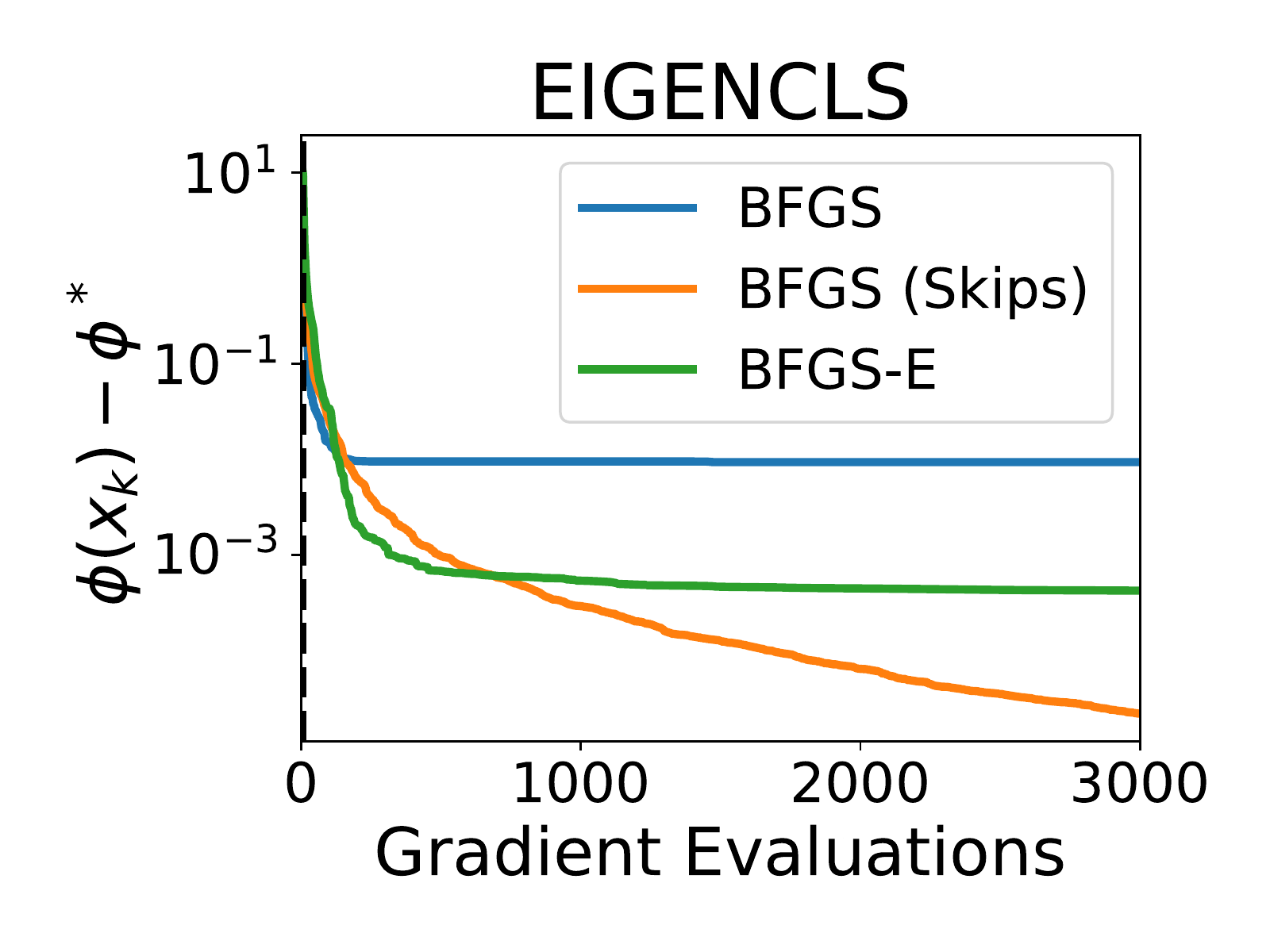}
    \includegraphics[width=0.32\linewidth]{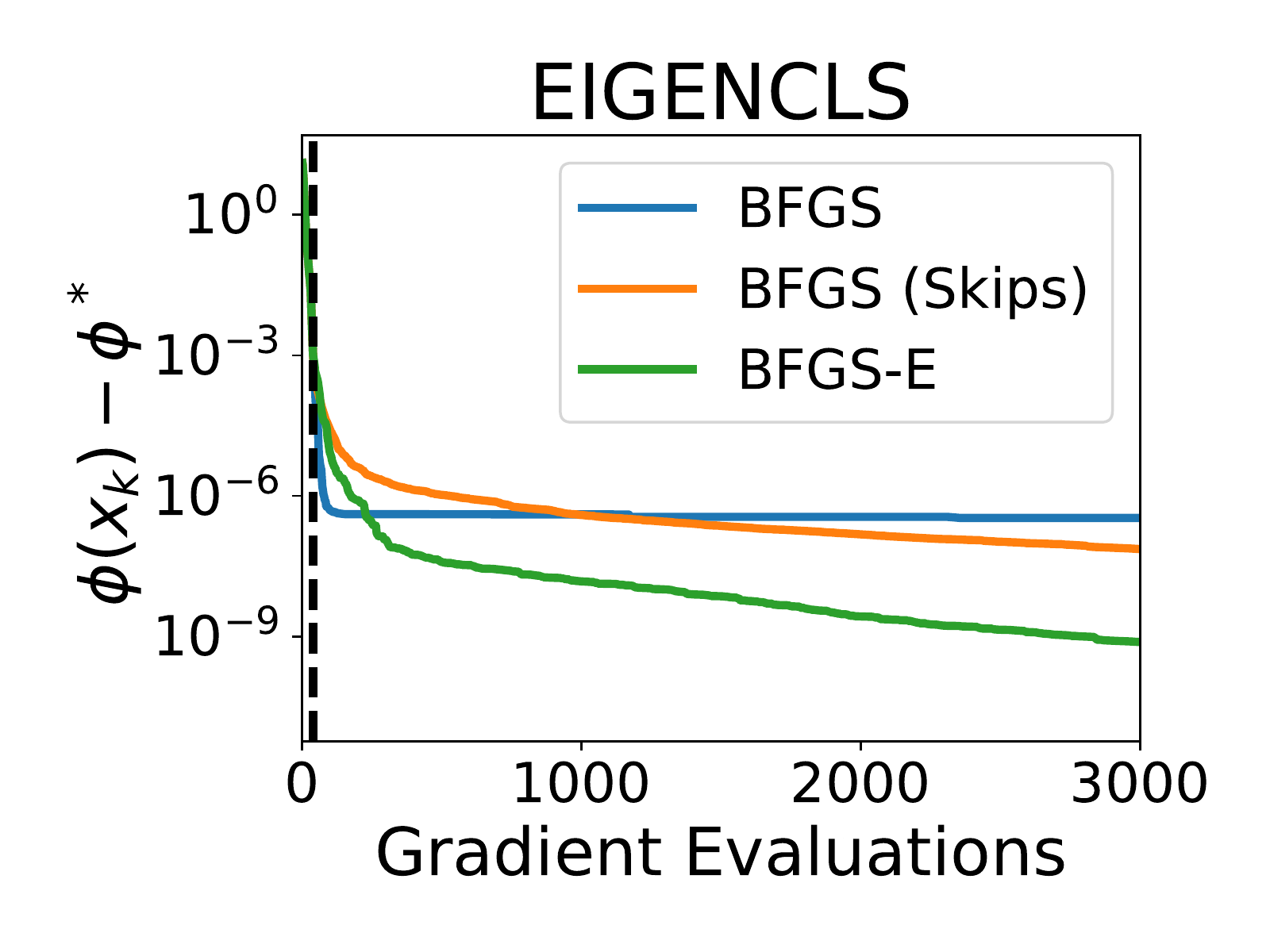}
    \includegraphics[width=0.32\linewidth]{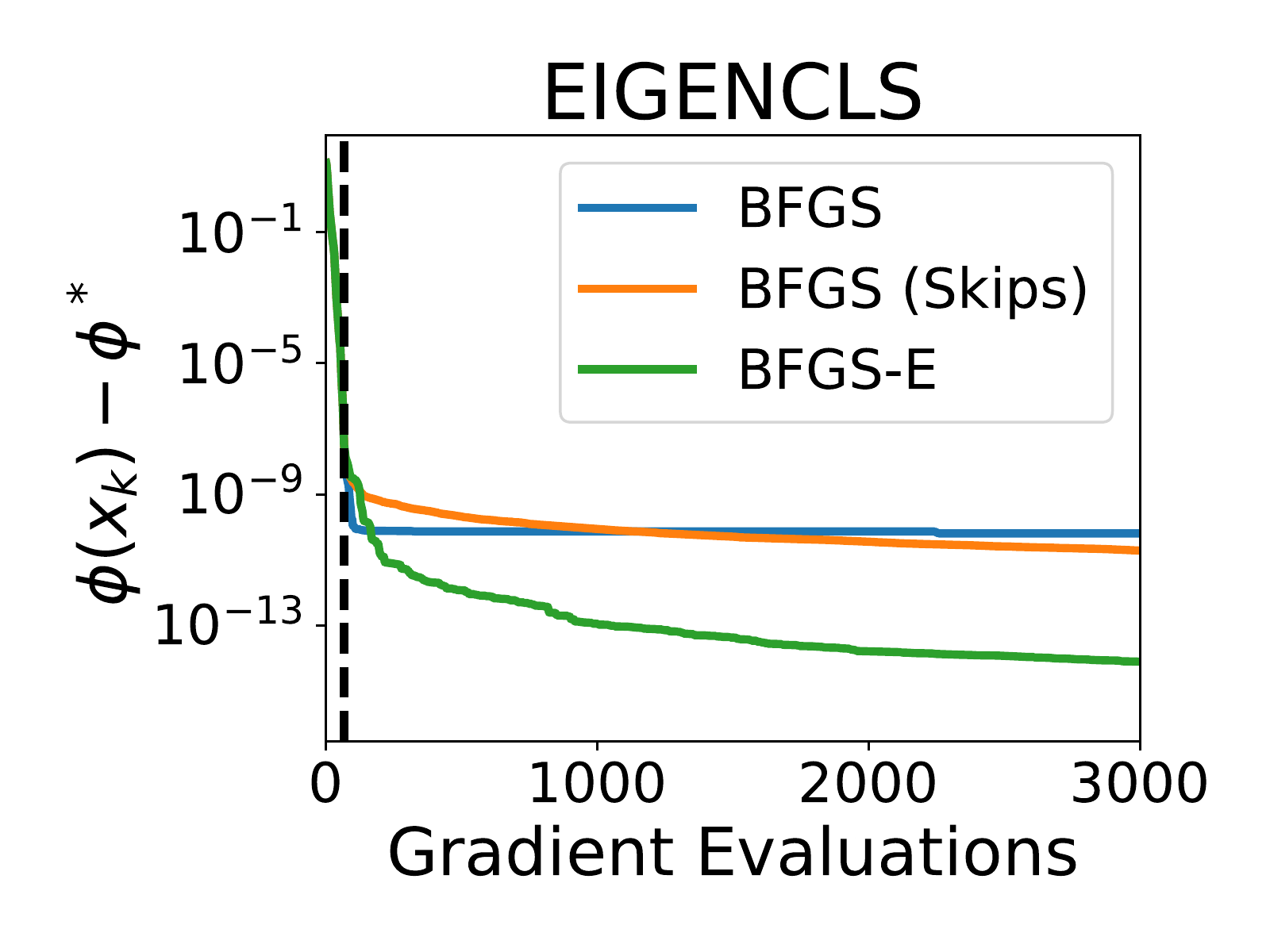}
    \includegraphics[width=0.32\linewidth]{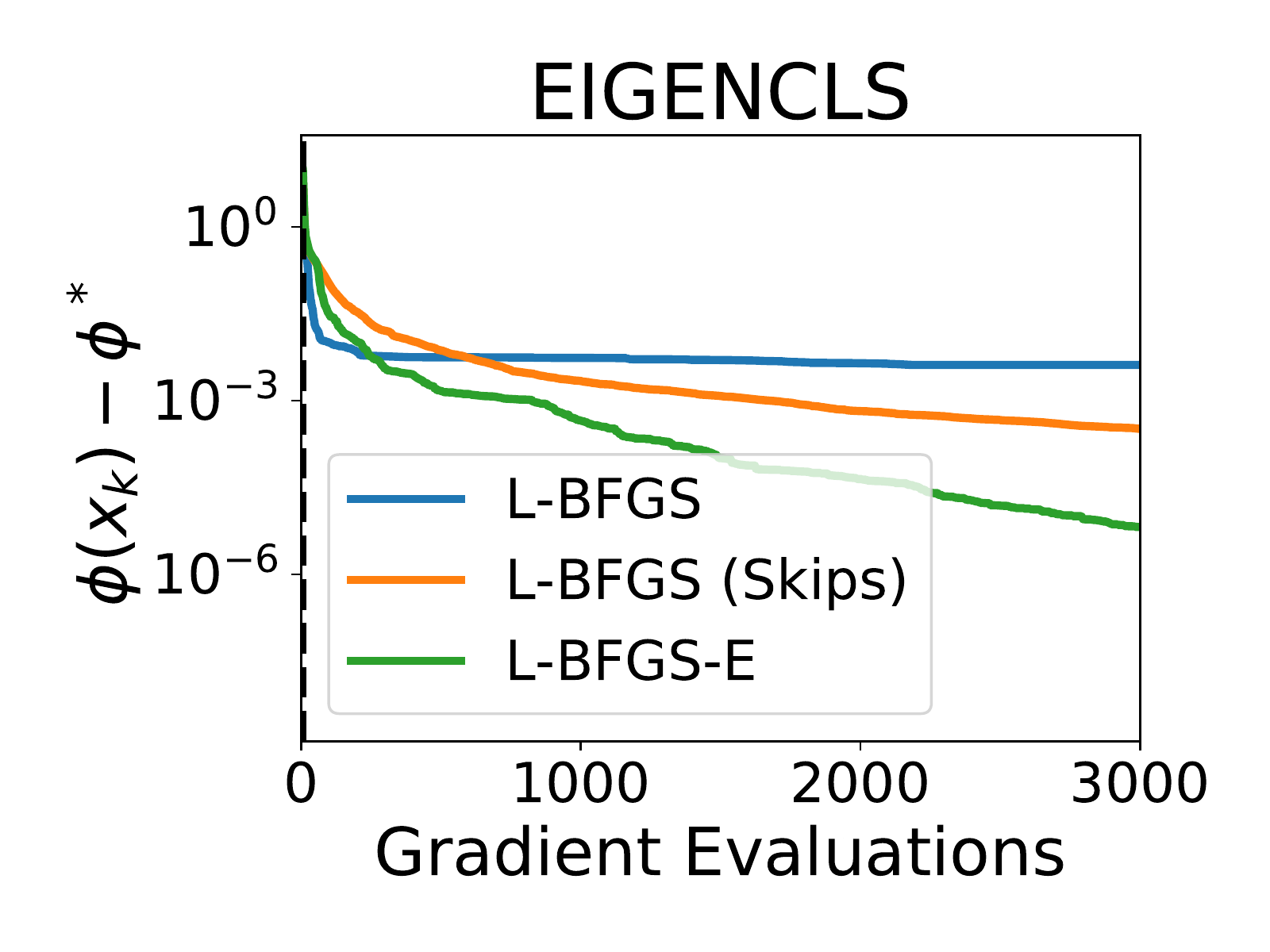}
    \includegraphics[width=0.32\linewidth]{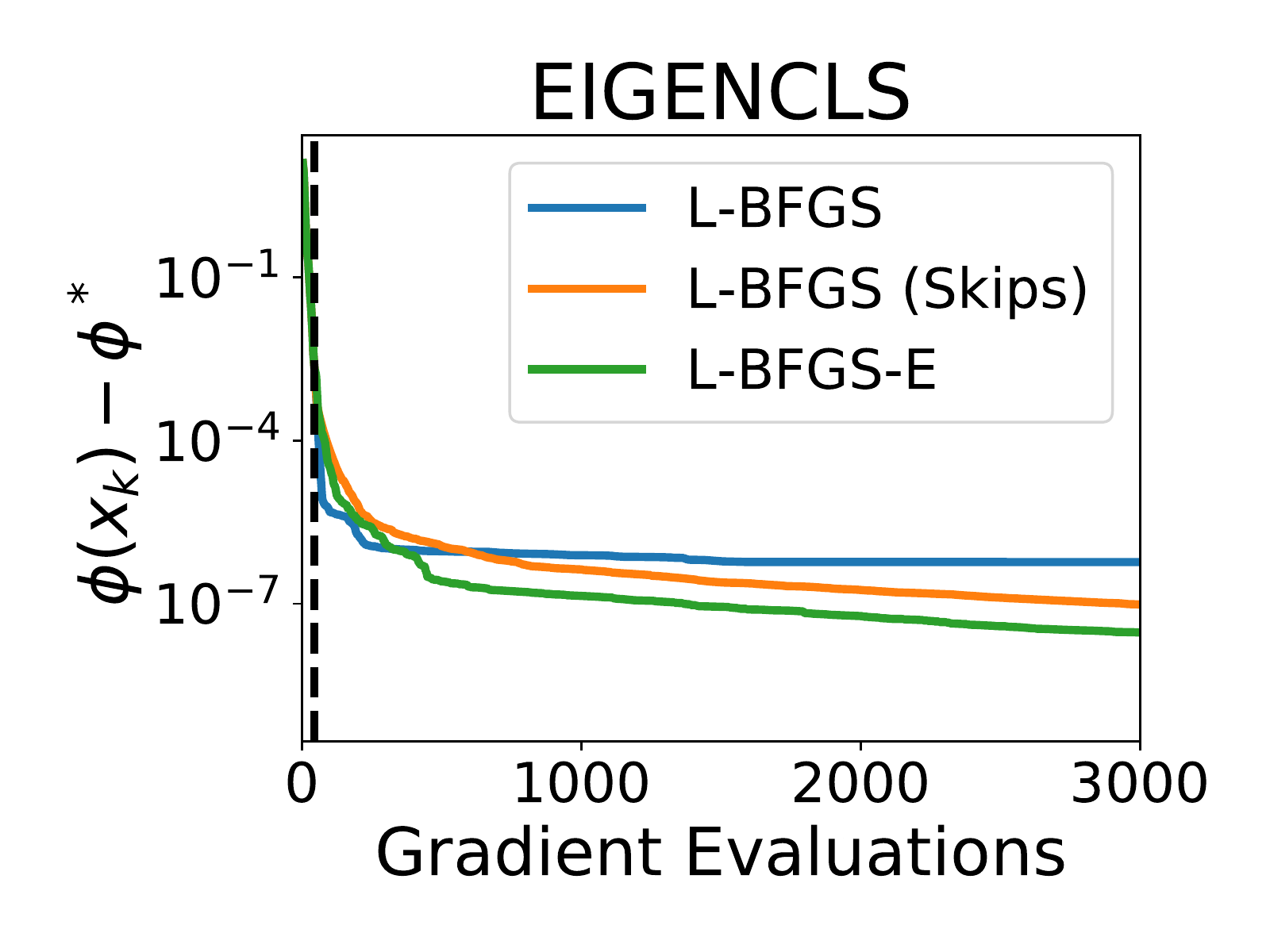}
    \includegraphics[width=0.32\linewidth]{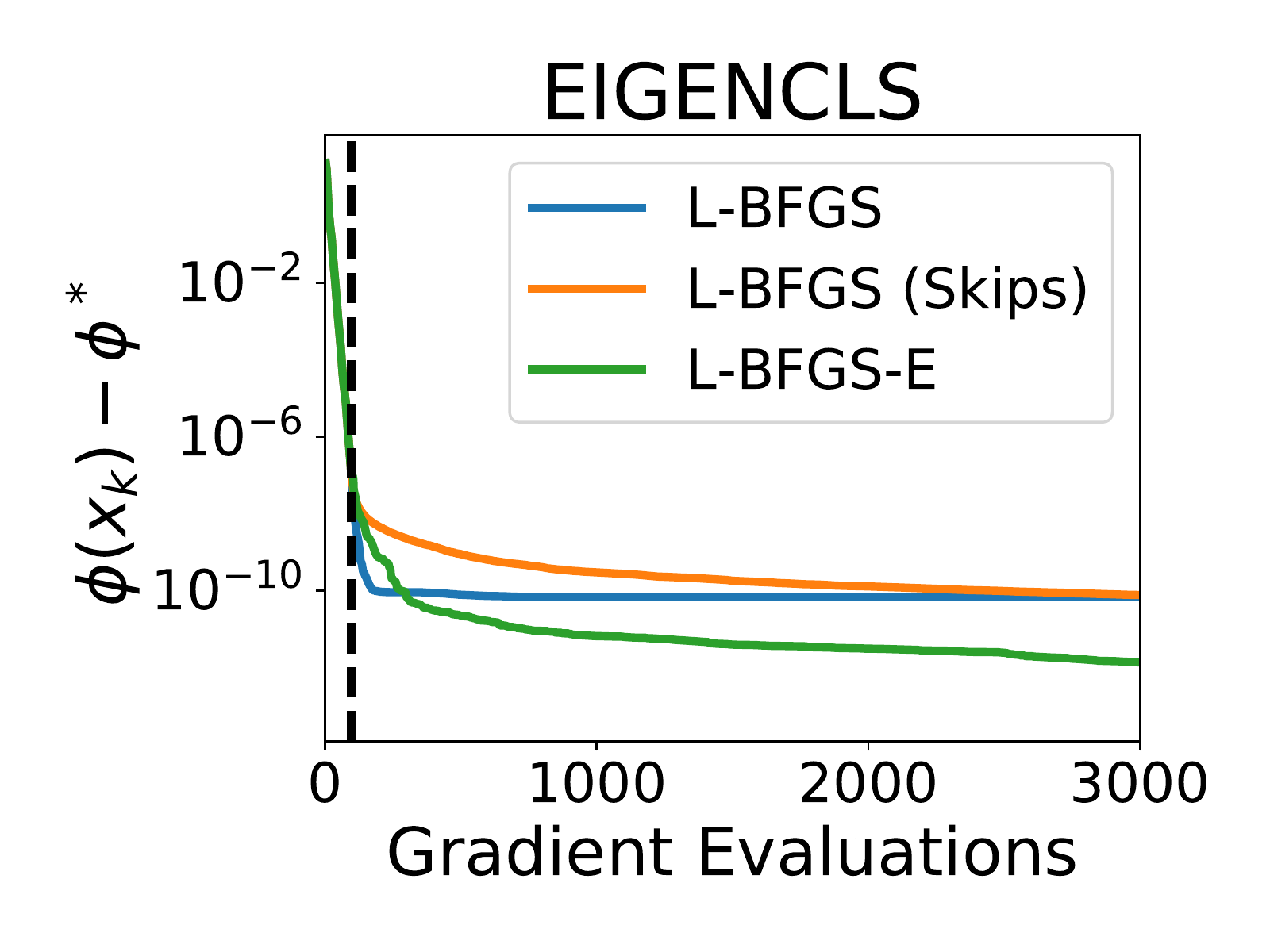}
    \caption{The true optimality gap $\phi(x_k) - \phi^*$ against the number of gradient evaluations on the \texttt{EIGENCLS} problem for $\epsilon_f=0$, and for the following gradient noise levels:  $\xi_g = 10^{-1}$ (left), $10^{-3}$ (middle), and $10^{-5}$ (right). The black dashed line denotes the iteration before the split phase becomes active.}
    \label{fig:obj for EIGENCLS}
\end{figure}

\begin{figure}[htp]
    \centering
    \includegraphics[width=0.32\linewidth]{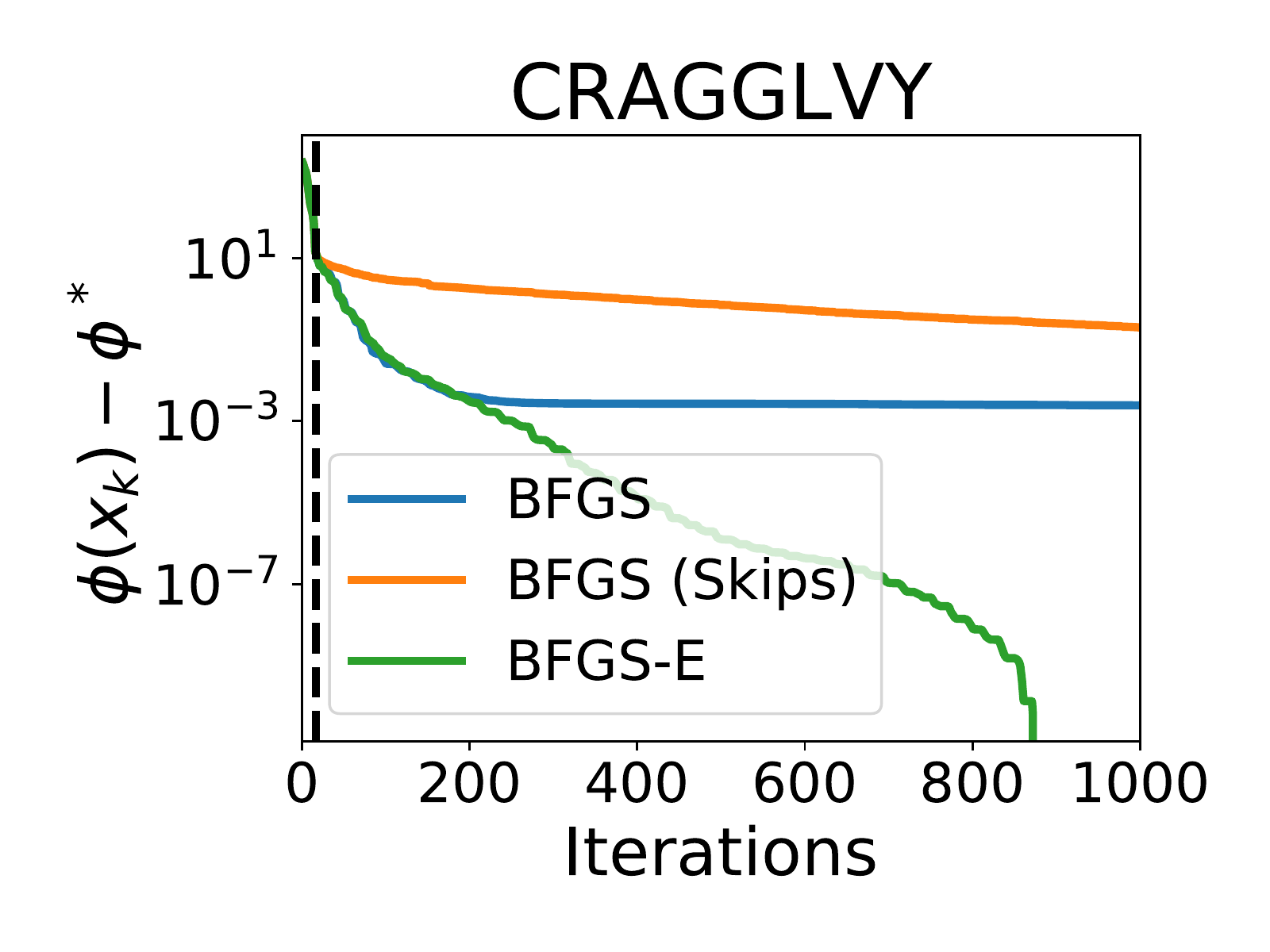}
    \includegraphics[width=0.32\linewidth]{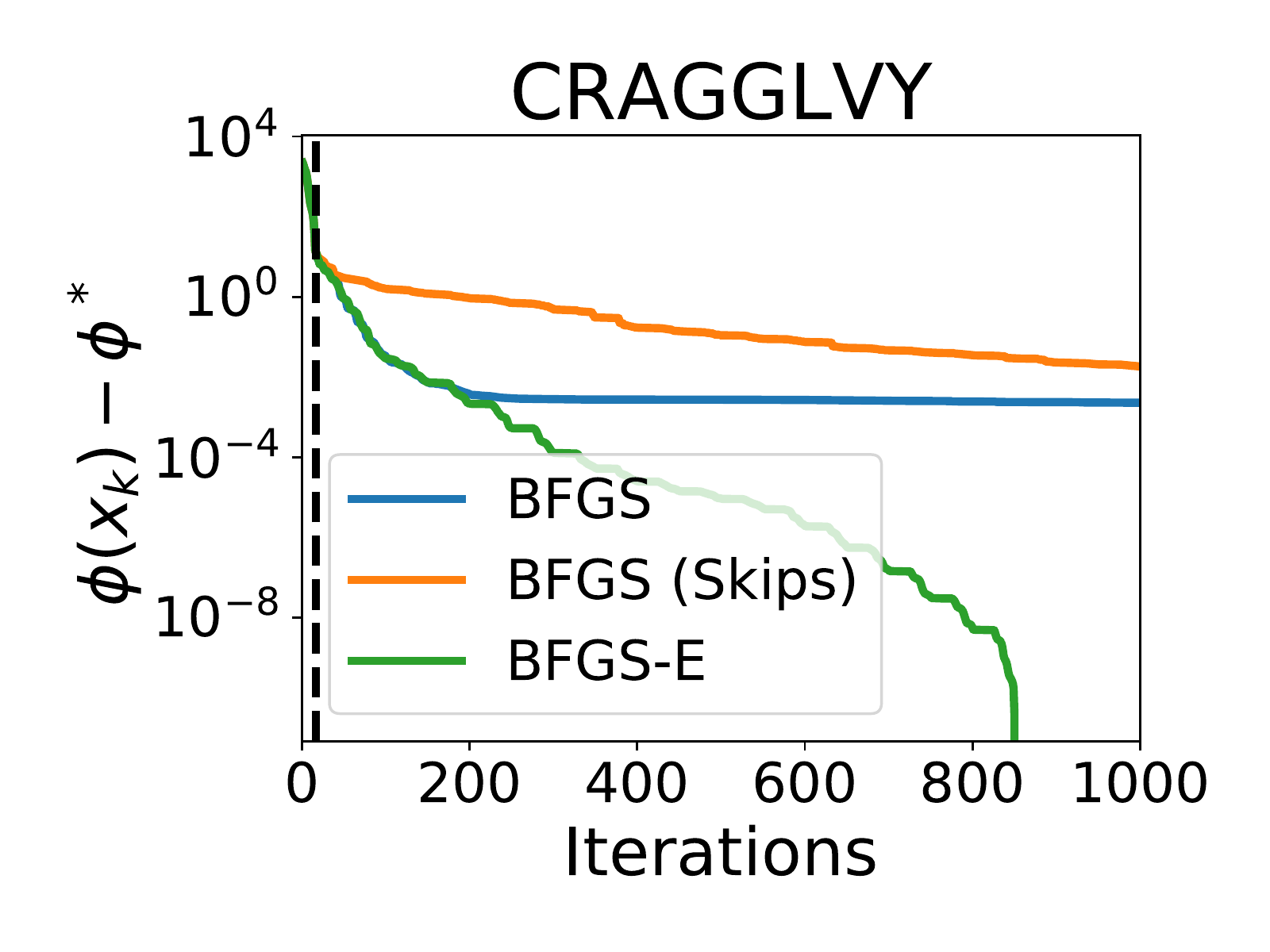}
    \includegraphics[width=0.32\linewidth]{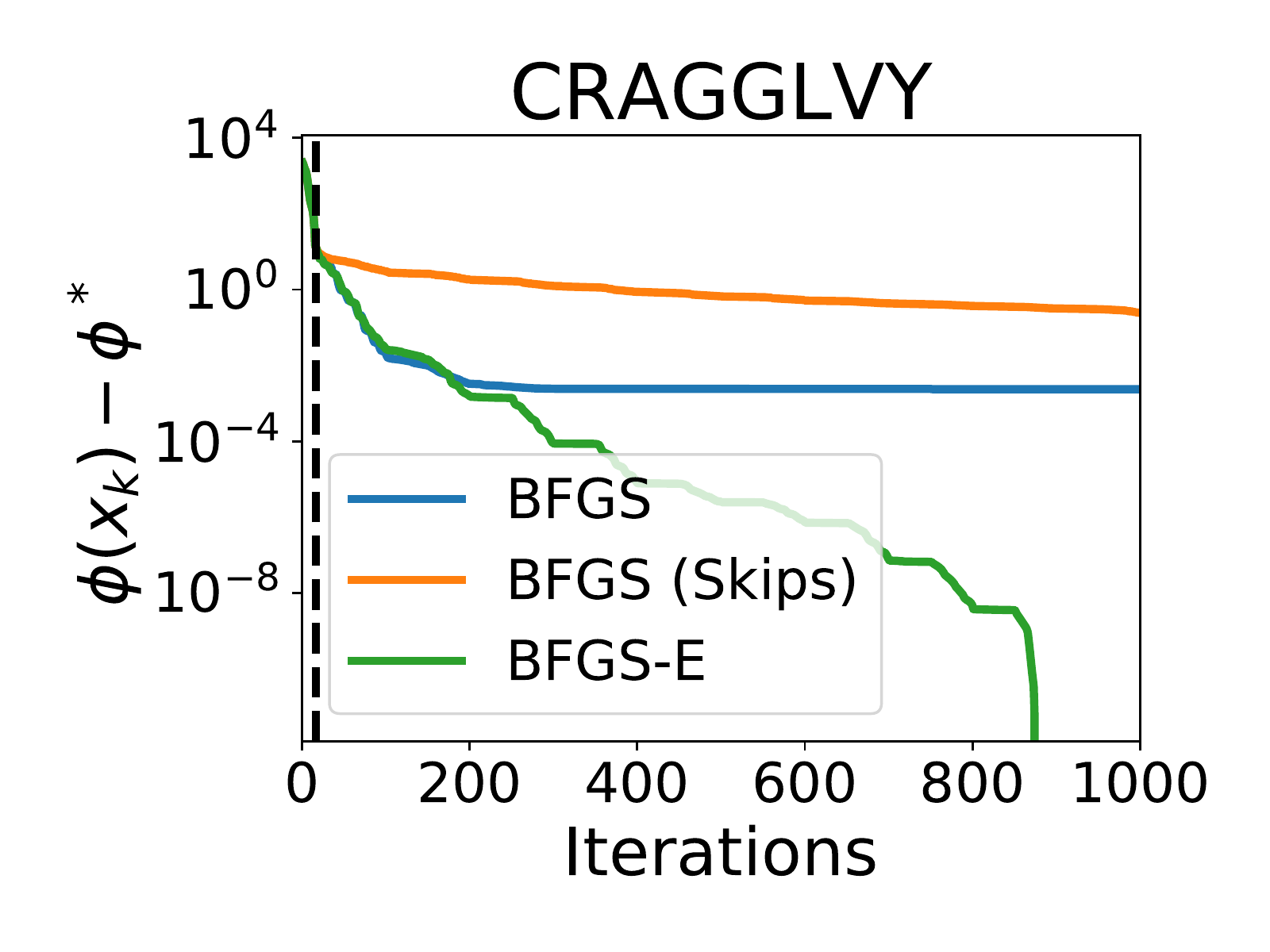}
    \includegraphics[width=0.32\linewidth]{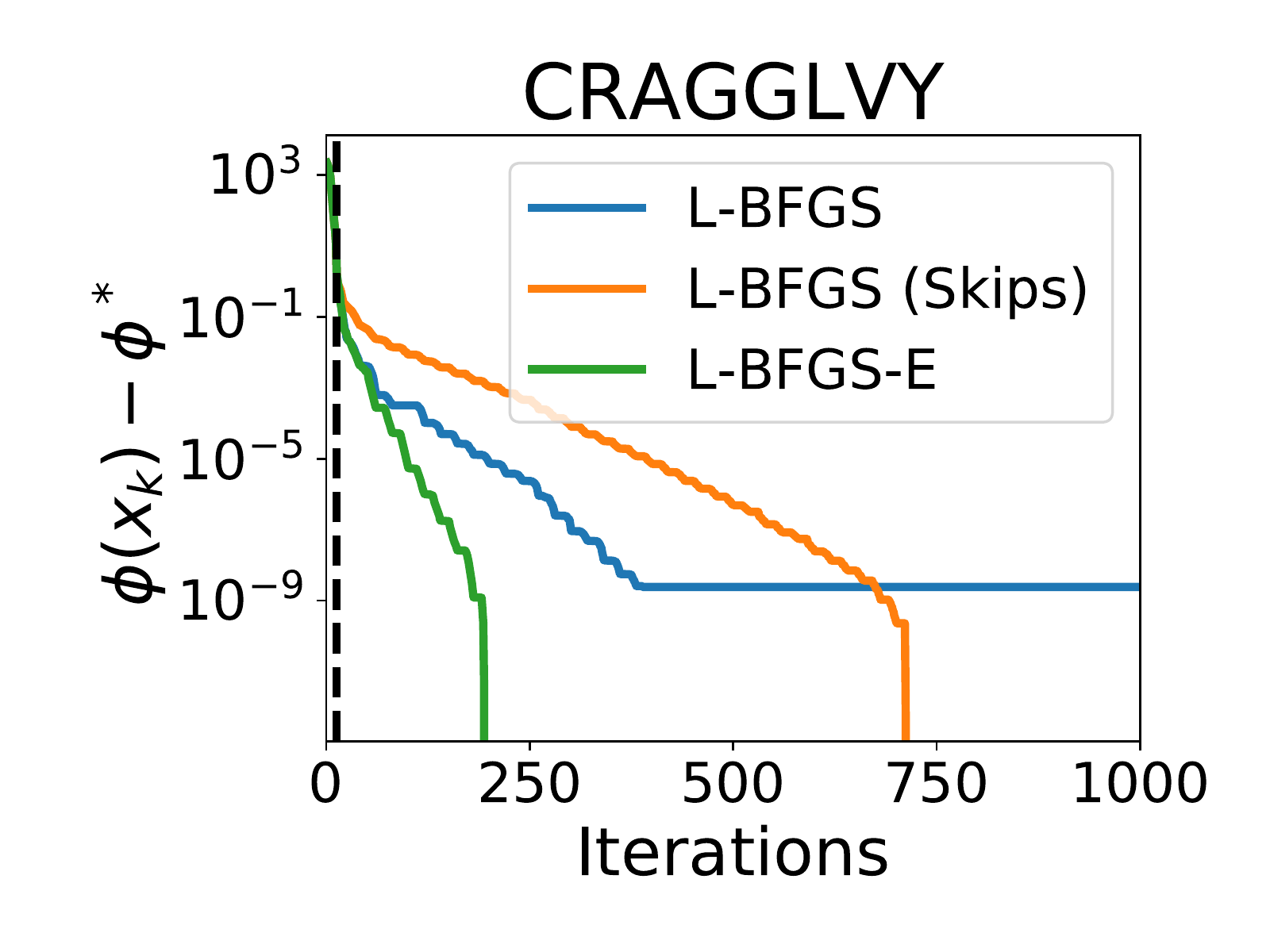}
    \includegraphics[width=0.32\linewidth]{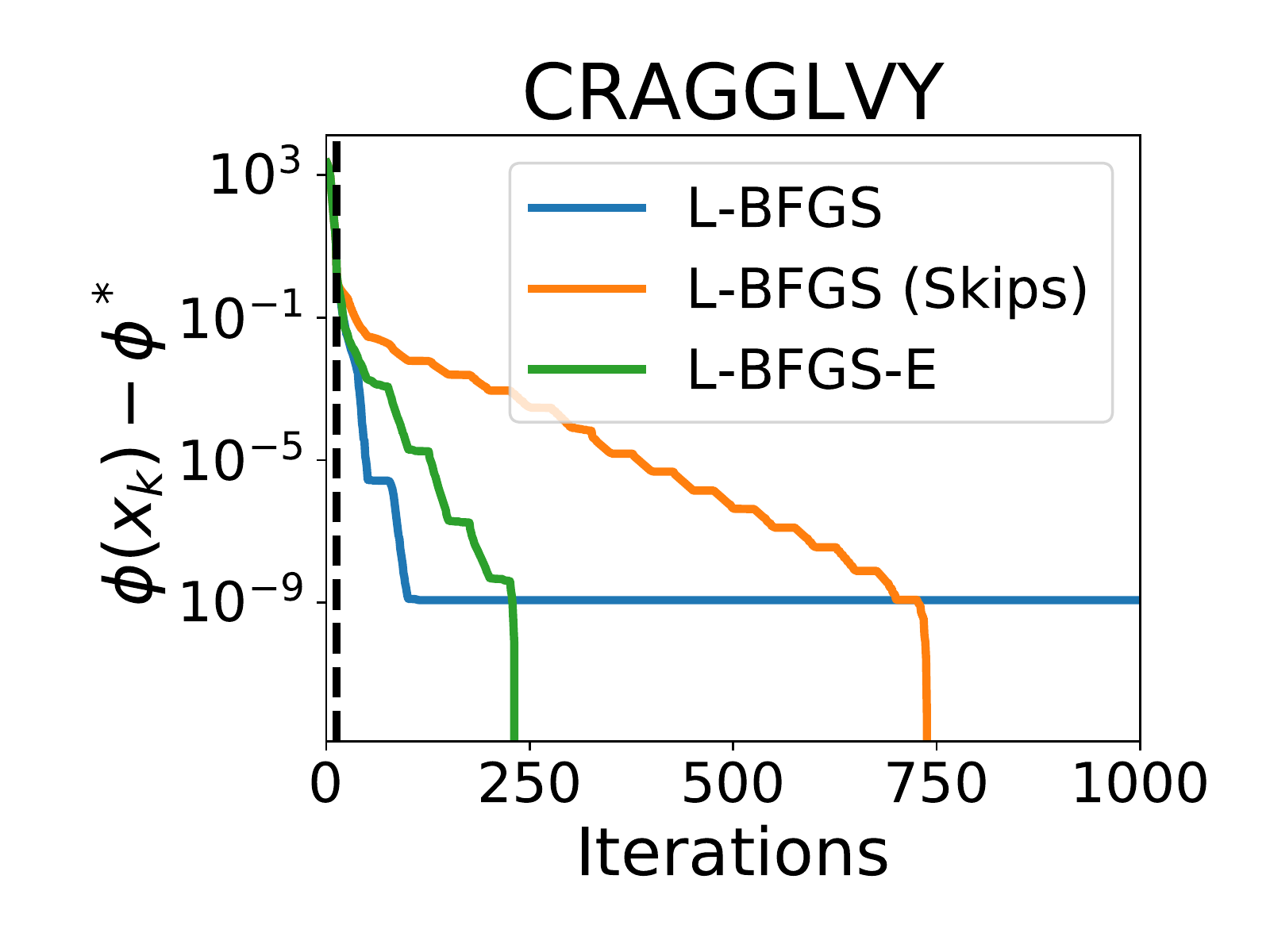}
    \includegraphics[width=0.32\linewidth]{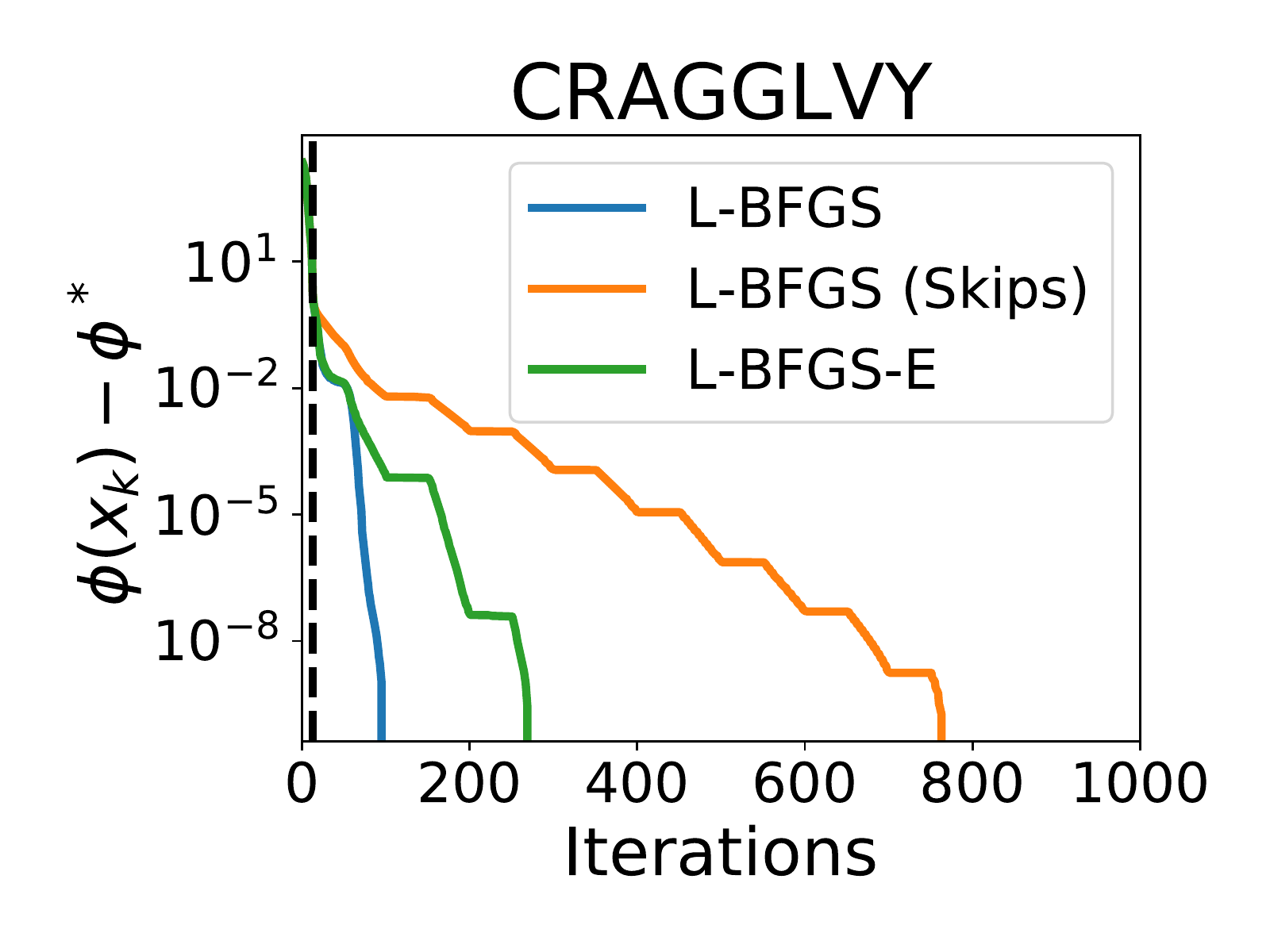}
    \caption{Intermittent Noise. Optimality gap $\phi(x_k) - \phi^*$ against the number of iterations on the \texttt{CRAGGLVY} problem. $\xi_f=0$ and $\xi_g$ alternates between 0 and with $\xi_g = 10^{-1}$ every $N_{\text{noise}}$ iterations. Results for $N_{\text{noise}} = 10$ (left), $25$ (middle), and $50$ (right). The black dashed line denotes the iteration before the split phase becomes active.}
    \label{fig:intermit CRAGGLVY}
\end{figure}

In Figure \ref{fig:obj for ENGVAL1}, we see that BFGS (Skips) and L-BFGS (Skips) can be much more efficient than BFGS-E and L-BFGS-E. In general, we found that methods that employ update skipping can be a strong alternative to lengthening if the problem is fairly well-conditioned and the Hessian does not change much, using much fewer gradient evaluations than the two-phase line search. However, it can fail to capture the change in curvature that is necessary for more difficult problems, such as \texttt{EIGENCLS} in Figure~\ref{fig:obj for EIGENCLS}. In such cases, continuing to update the BFGS matrix using lengthening is able to continue to improve the quality of the Hessian approximation for more difficult problems, leading to faster decrease in the objective value compared to skipping.

To see how update skipping compares to lengthening in the intermittent setting, we report in Figure \ref{fig:intermit CRAGGLVY}  results on \texttt{CRAGGLVY},  a problem of high difficulty.
The skipping methods are able to make faster progress when noise is diminished but not as quickly as the noise-tolerant methods since they do not benefit from good updates to the BFGS matrix. 

Since skipping is not as robust as lengthening for handling more difficult problems and in taking advantage of fluctuating noise, we do not report its numerical results for the experiments in the following section.

\subsection{Experiments with Function and Gradient Noise}

In this set of experiments, we inject noise in both the function and gradient, i.e., $\epsilon_f, \epsilon_g > 0$. First, we report in Figures \ref{fig:DIXMAANH fg noise 1} and \ref{fig:DIXMAANH fg noise 2} results for a representative example: problem \texttt{DIXMAANH}. We ran all methods for 3000 gradient evaluations to illustrate their long term behavior,  for different values of $\epsilon_g$ and $\epsilon_f$. We note that the lengthening procedure safeguards the Hessian updating in the presence of function noise, and the relaxation in the Armijo condition \eqref{eq:relaxed armijo} allows the methods to continue making progress far below the noise level of the function if the gradient noise is sufficiently small to provide good search directions.


\begin{figure}[htp]
    \centering
    \includegraphics[width=0.32\linewidth]{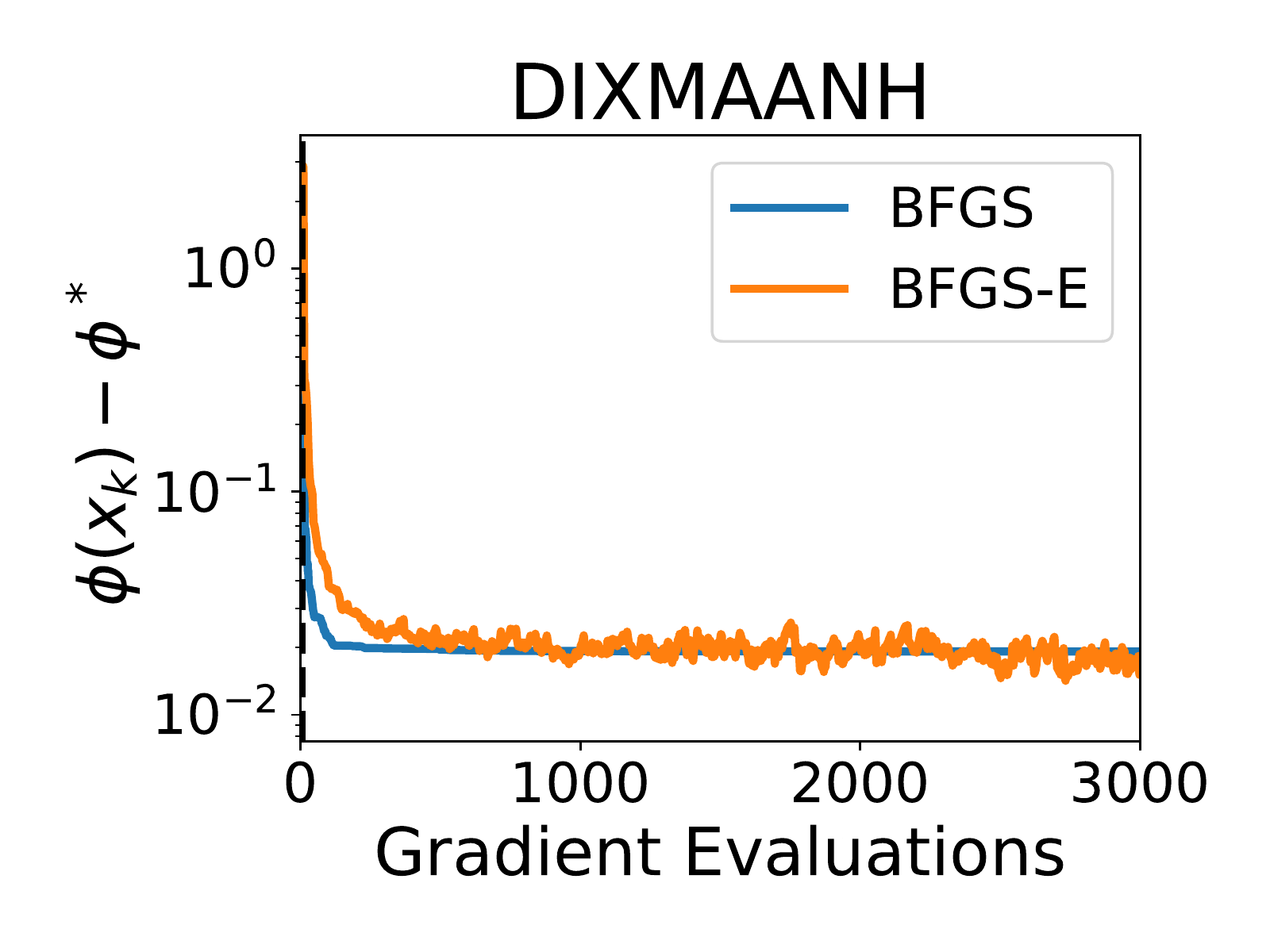}
    \includegraphics[width=0.32\linewidth]{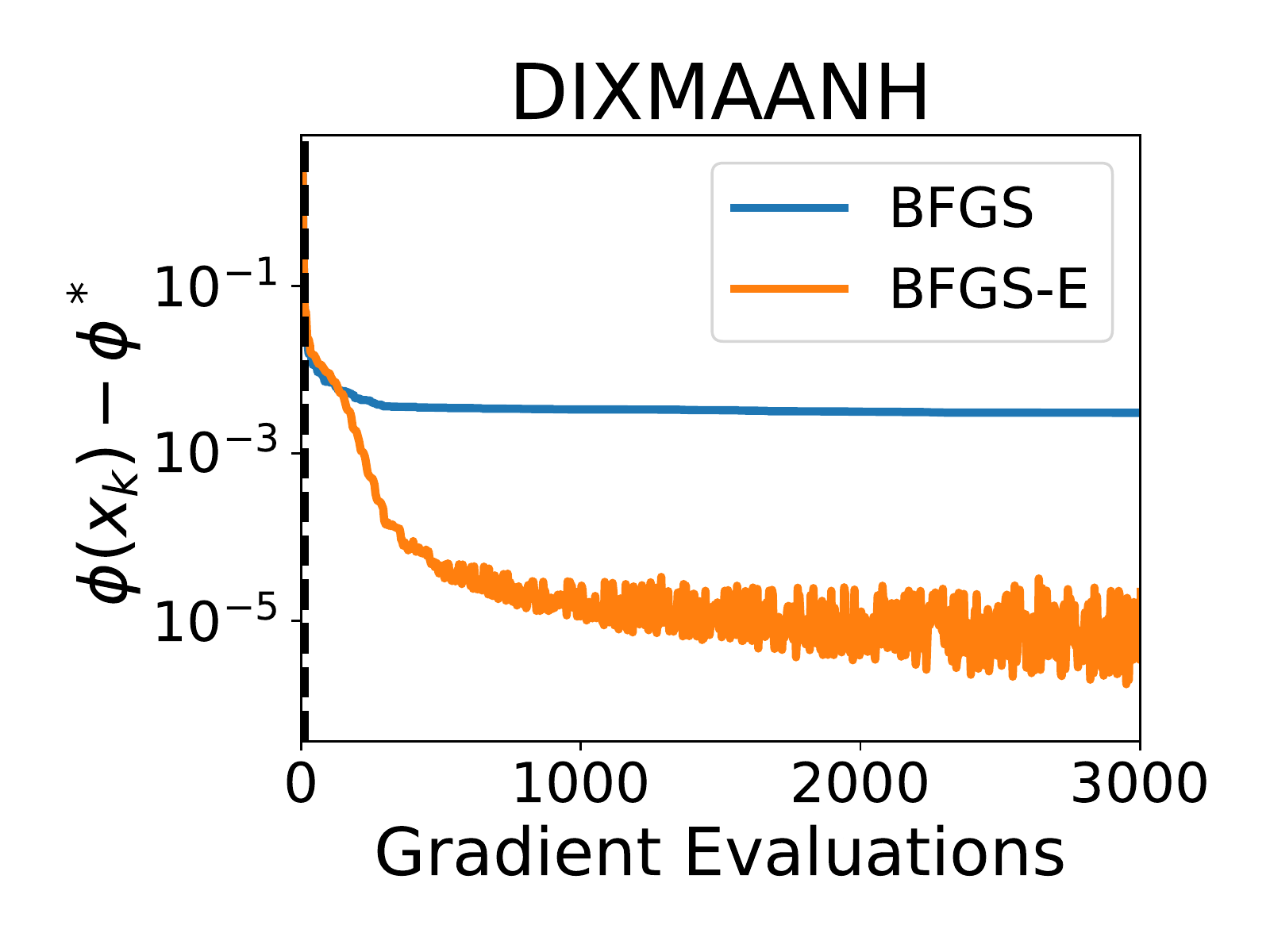}
    \includegraphics[width=0.32\linewidth]{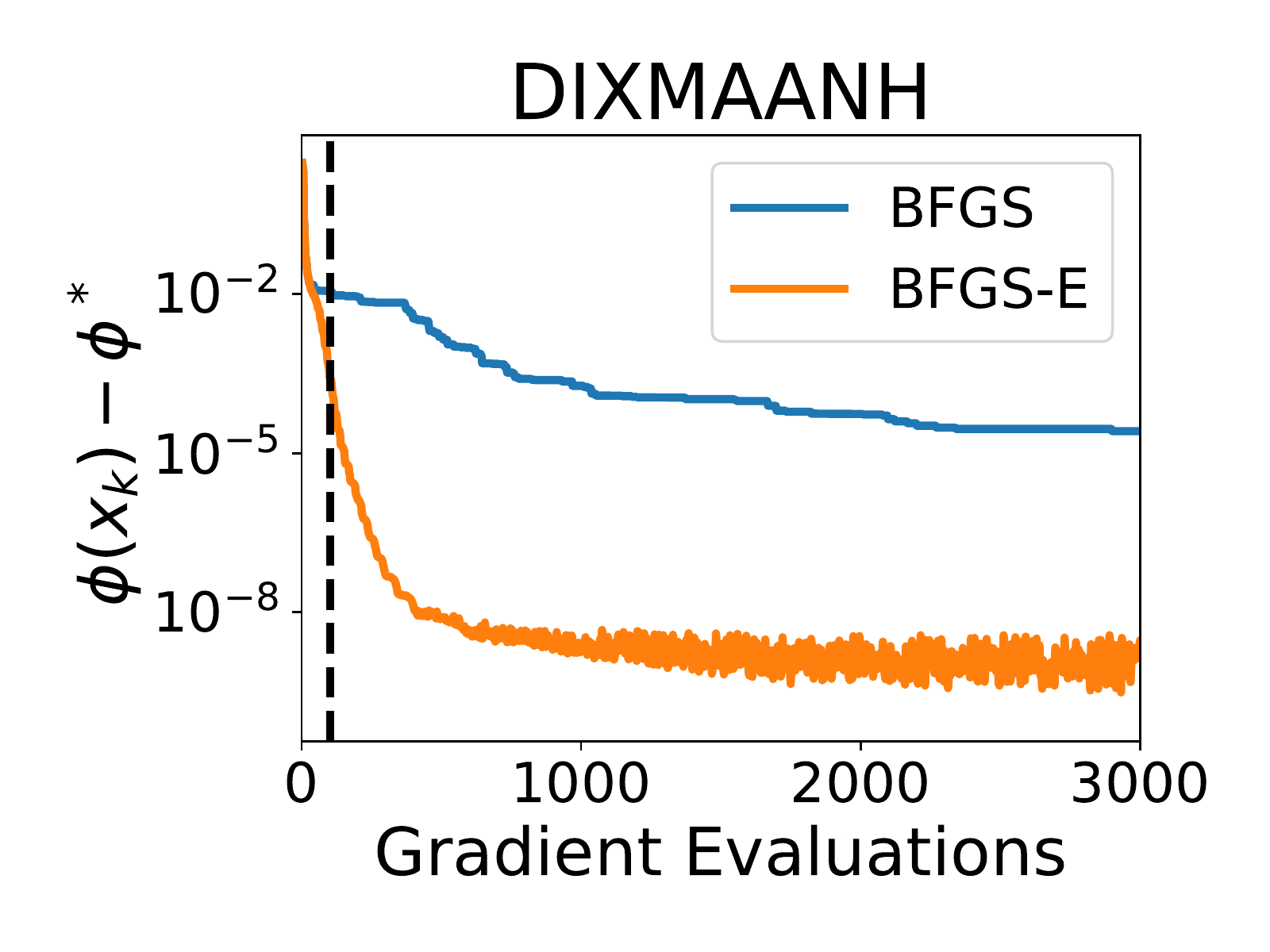} \\
    \includegraphics[width=0.32\linewidth]{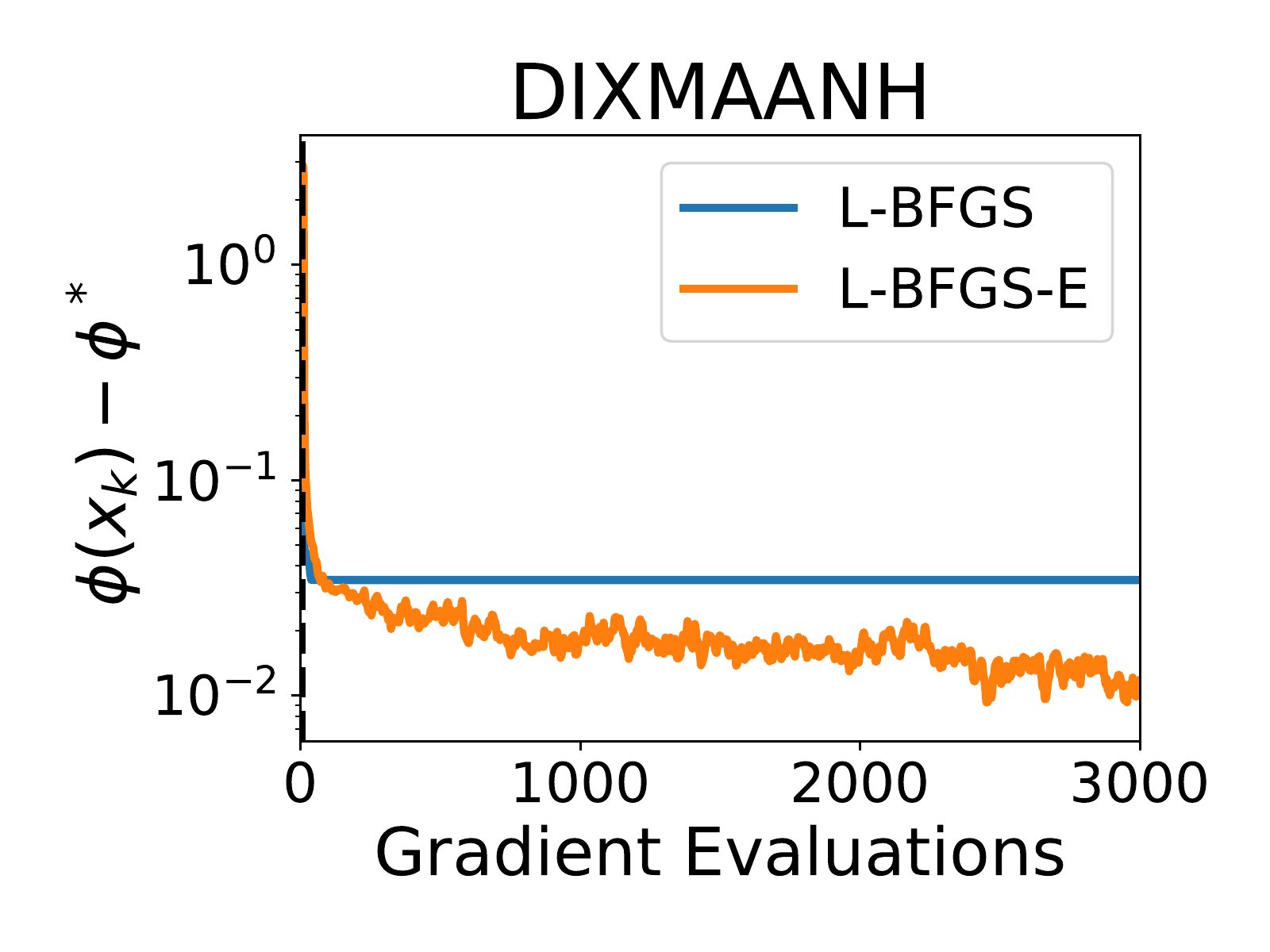}
    \includegraphics[width=0.32\linewidth]{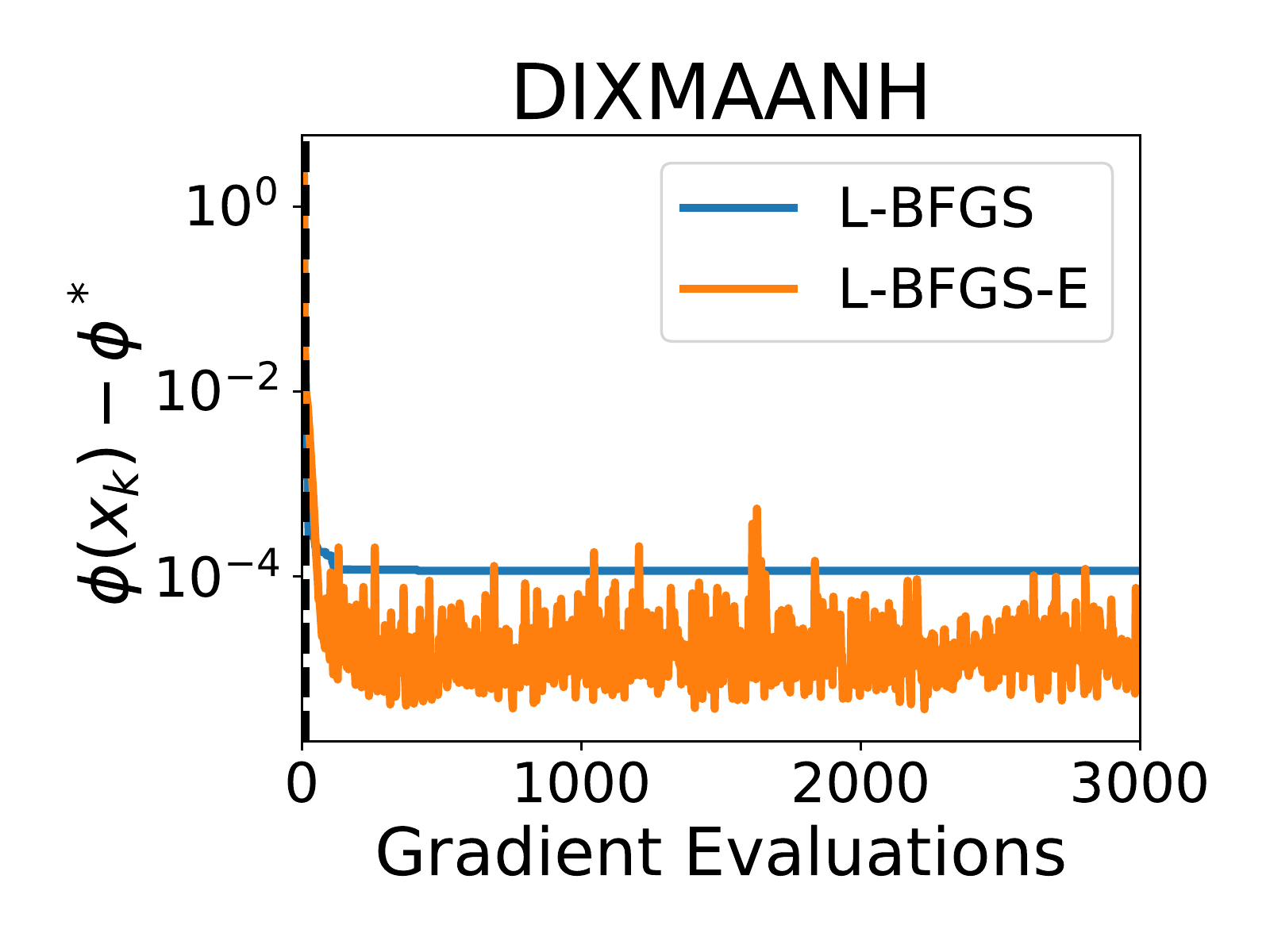}
    \includegraphics[width=0.32\linewidth]{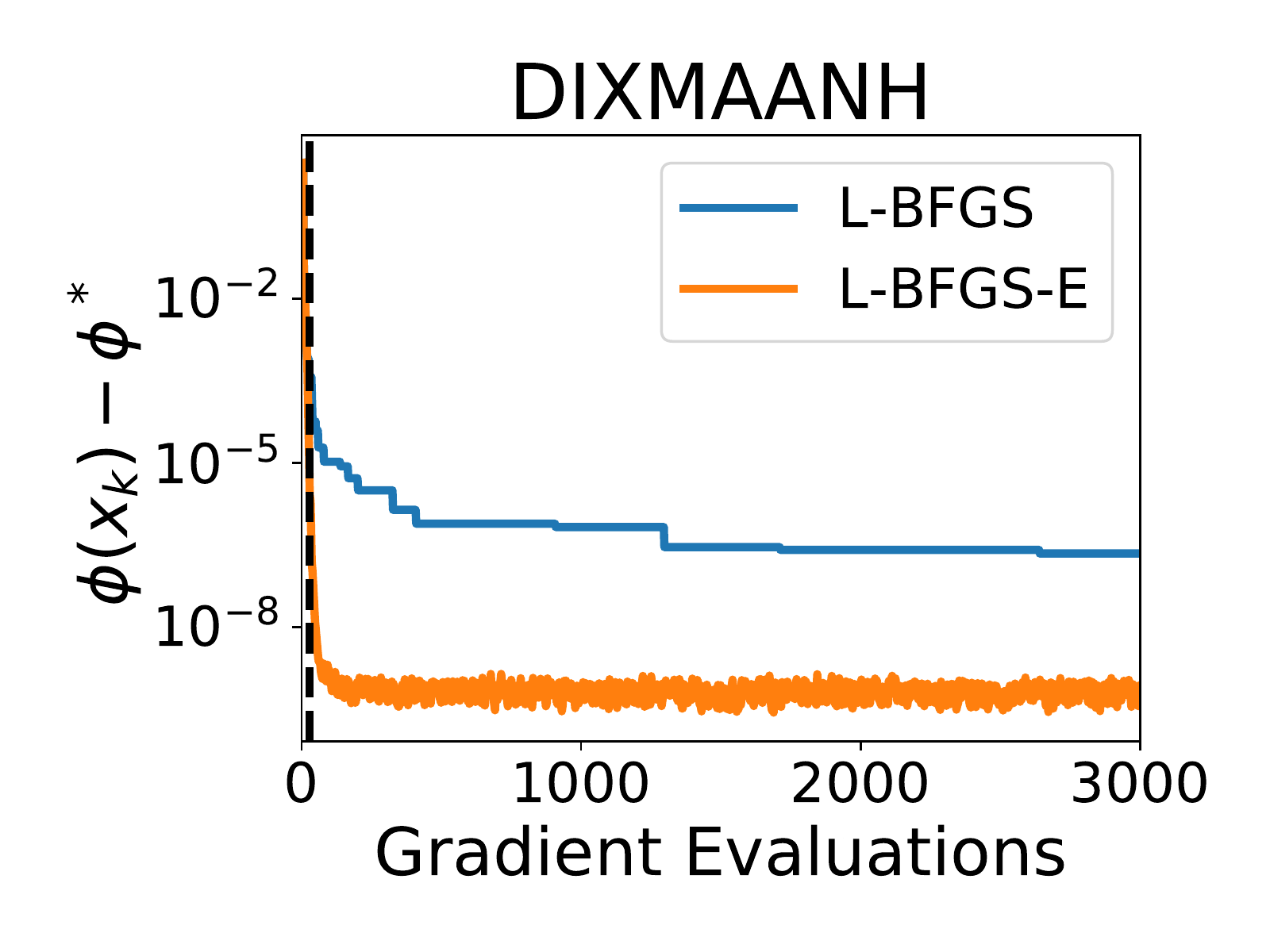}
    \caption{Optimality gap $\phi(x_k) - \phi^*$ against the number of gradient evaluations on problem \texttt{DIXMAANH}, with $\xi_f = 10^{-3}$ on all six plots, and with $\xi_g = 10^{-1}$ (left), $\xi_g=10^{-3}$ (middle), and $\xi_g= 10^{-5}$ (right). The black dashed line denotes the iteration before the split phase becomes active.
    }
    \label{fig:DIXMAANH fg noise 1}
\end{figure}

\begin{figure}
    \centering
    \includegraphics[width=0.32\linewidth]{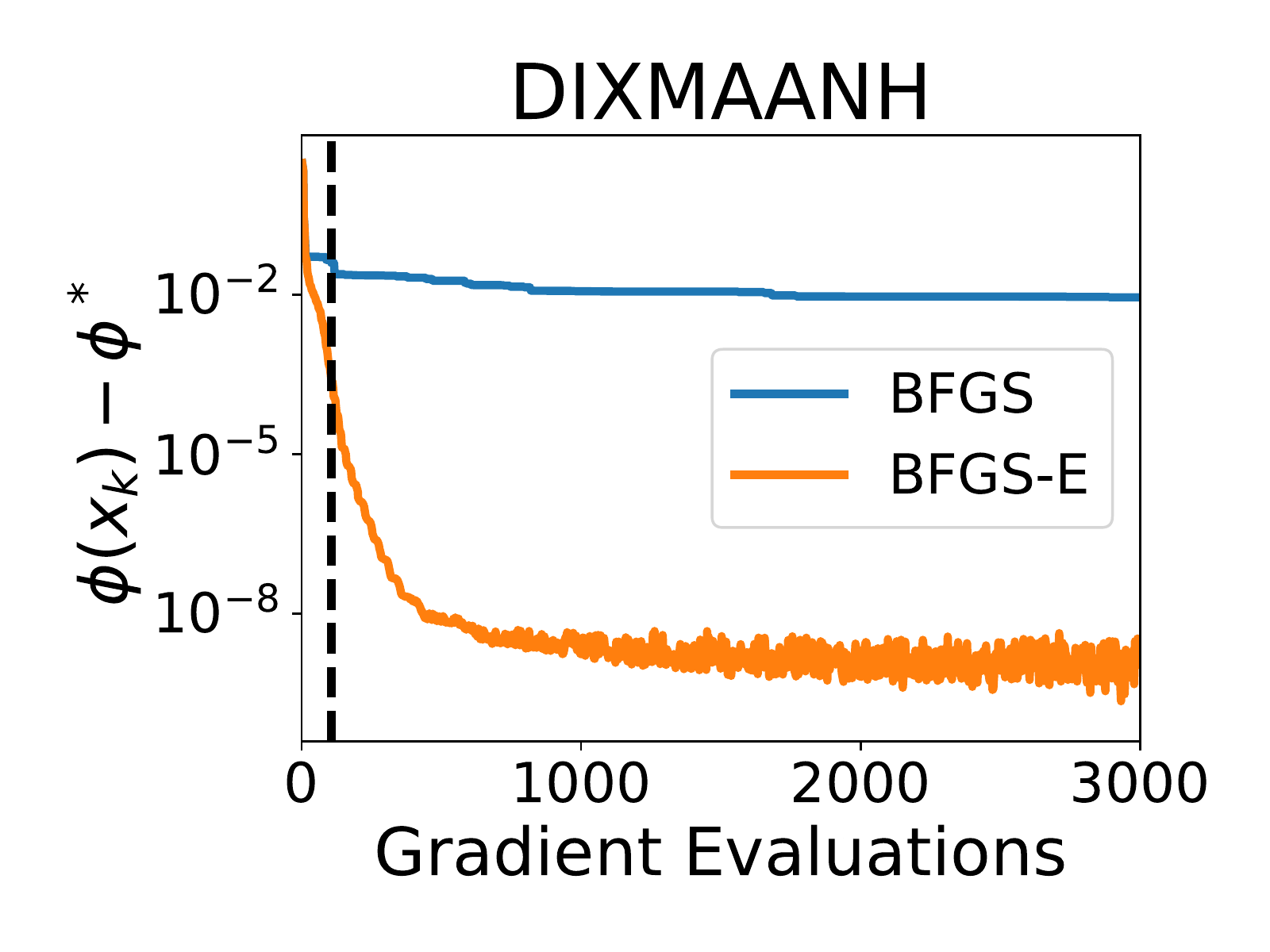}
    \includegraphics[width=0.32\linewidth]{final_plots/fg_noise/BFGS/obj_ts_grad_evals_DIXMAANH_SUA_1e-3_1e-5_1226.pdf}
    \includegraphics[width=0.32\linewidth]{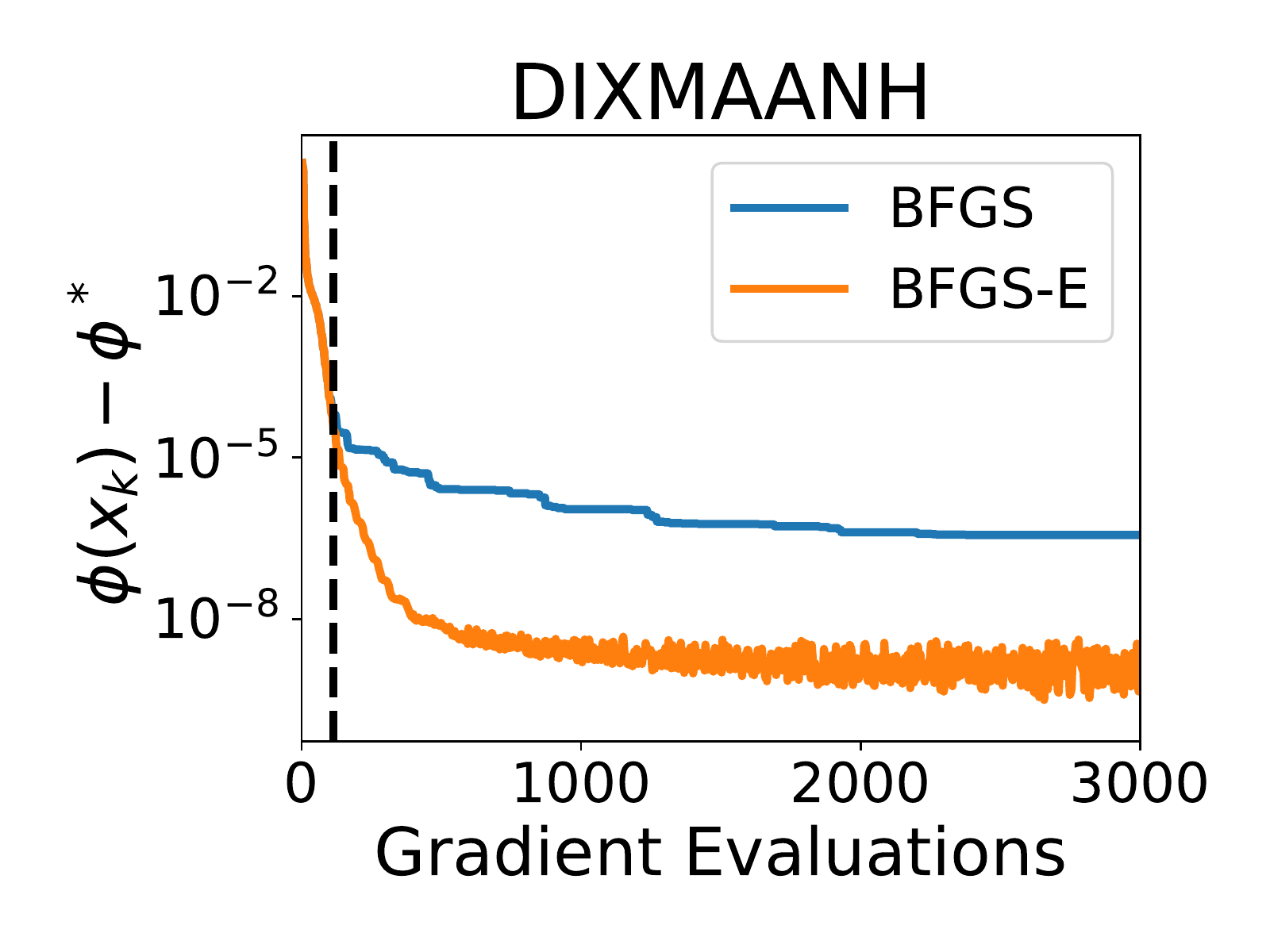} \\
    \includegraphics[width=0.32\linewidth]{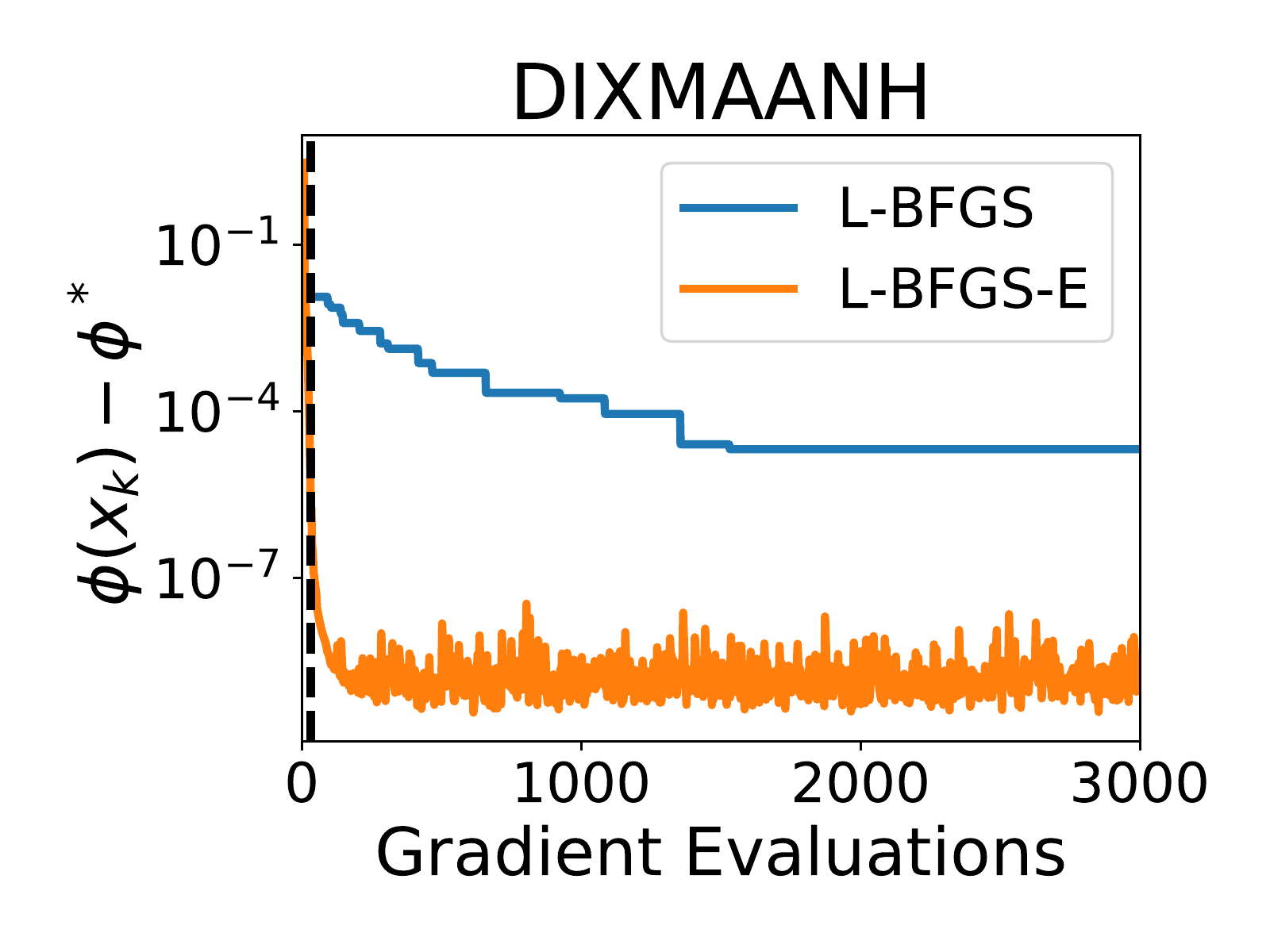}
    \includegraphics[width=0.32\linewidth]{final_plots/fg_noise/L-BFGS/obj_ts_grad_evals_DIXMAANH_SUA_1e-3_1e-5_1226.pdf}
    \includegraphics[width=0.32\linewidth]{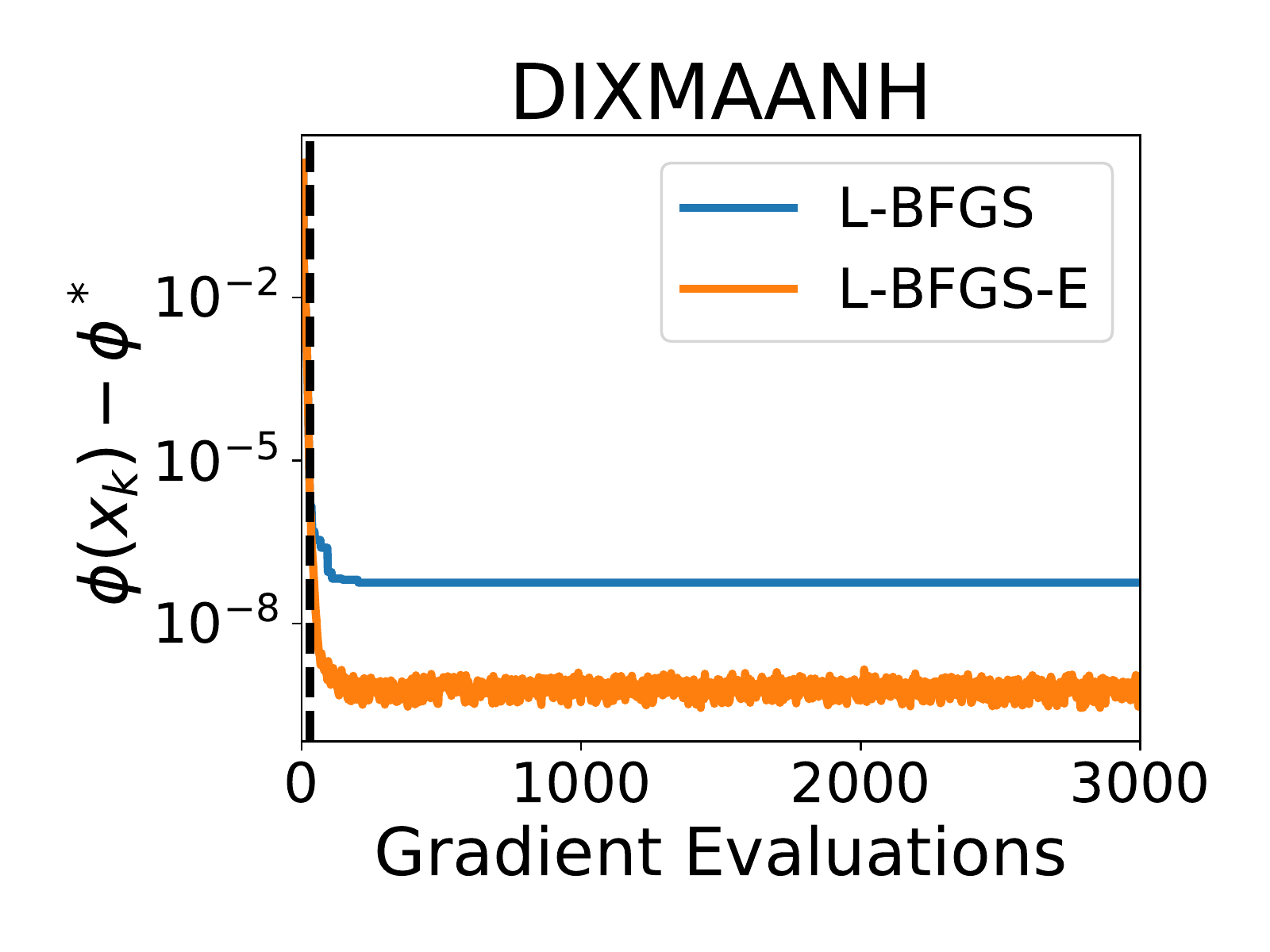}
    \caption{Optimality gap $\phi(x_k) - \phi^*$ against the number of gradient evaluations on problem \texttt{DIXMAANH}  with $\xi_g = 10^{-5}$ on all six plots, and with $\xi_f=10^{-1}$ (left), $\xi_f=10^{-3}$ (middle), and $\xi_f= 10^{-5}$ (right). The black dashed line denotes the iteration before the split phase becomes active.
    }
    \label{fig:DIXMAANH fg noise 2}
\end{figure}


Lastly, we report the performance of the methods on the 41 test problems listed in Table~2, using the profiles proposed by Morales  \cite{morales2011remark}. In Figures \ref{fig:morales obj 1} and \ref{fig:morales grad evals 1} we report, respectively, the  quantities
\begin{equation}   \label{value}
    \log_2\left(\frac{\phi_{new} - \phi^*}{\phi_{old} - \phi^*} \right) ~~~ \text{and} ~~~ \log_2\left(\frac{evals_{new}}{evals_{old}} \right),
\end{equation}
for each problem. Here 
 $\phi_{new}$ and $\phi_{old}$ denote the true objective value of the noise-tolerant and standard methods after 3000 iterations, and $evals_{new}$ and $evals_{old}$ denote the total number of gradient evaluations required to achieve one of the conditions:
\begin{equation}  \label{twoterm}
    \phi(x_k) - \phi^* \leq \epsilon_f ~~~ \text{or} ~~~ \|\nabla \phi(x_k)\| \leq \epsilon_g .
\end{equation}
All quantities are averaged over 5 runs with different seeds. In Figures \ref{fig:morales obj 1} and \ref{fig:morales grad evals 1} the problems are ordered in increasing value of the quantities given in \eqref{value}. One can thus gauge the success of a method by the area of the graph on its side of the half-space: the larger the area, the more successful the method.

\begin{figure}[htp]
    \centering
    \includegraphics[width=0.45\linewidth]{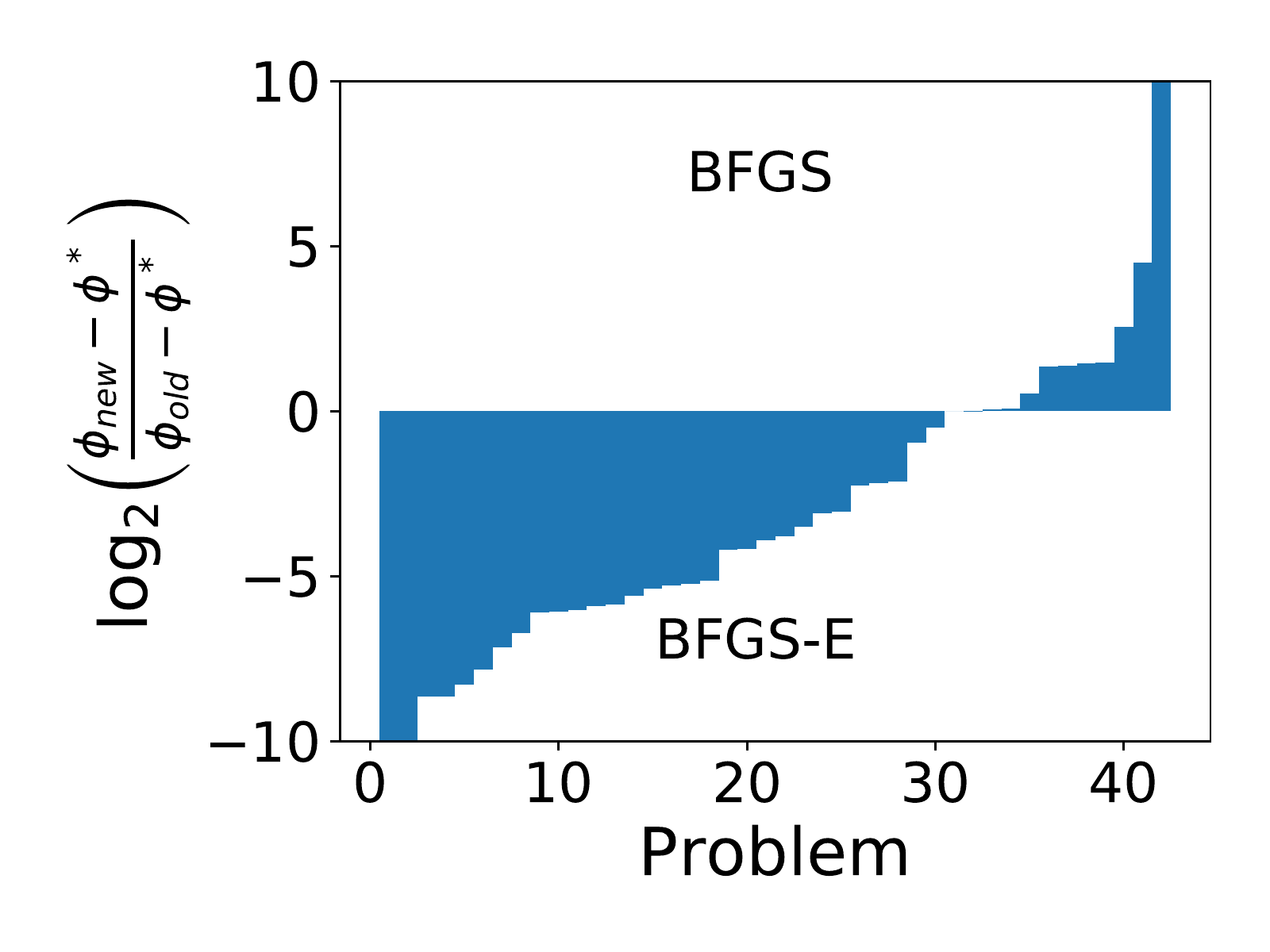}
    \includegraphics[width=0.45\linewidth]{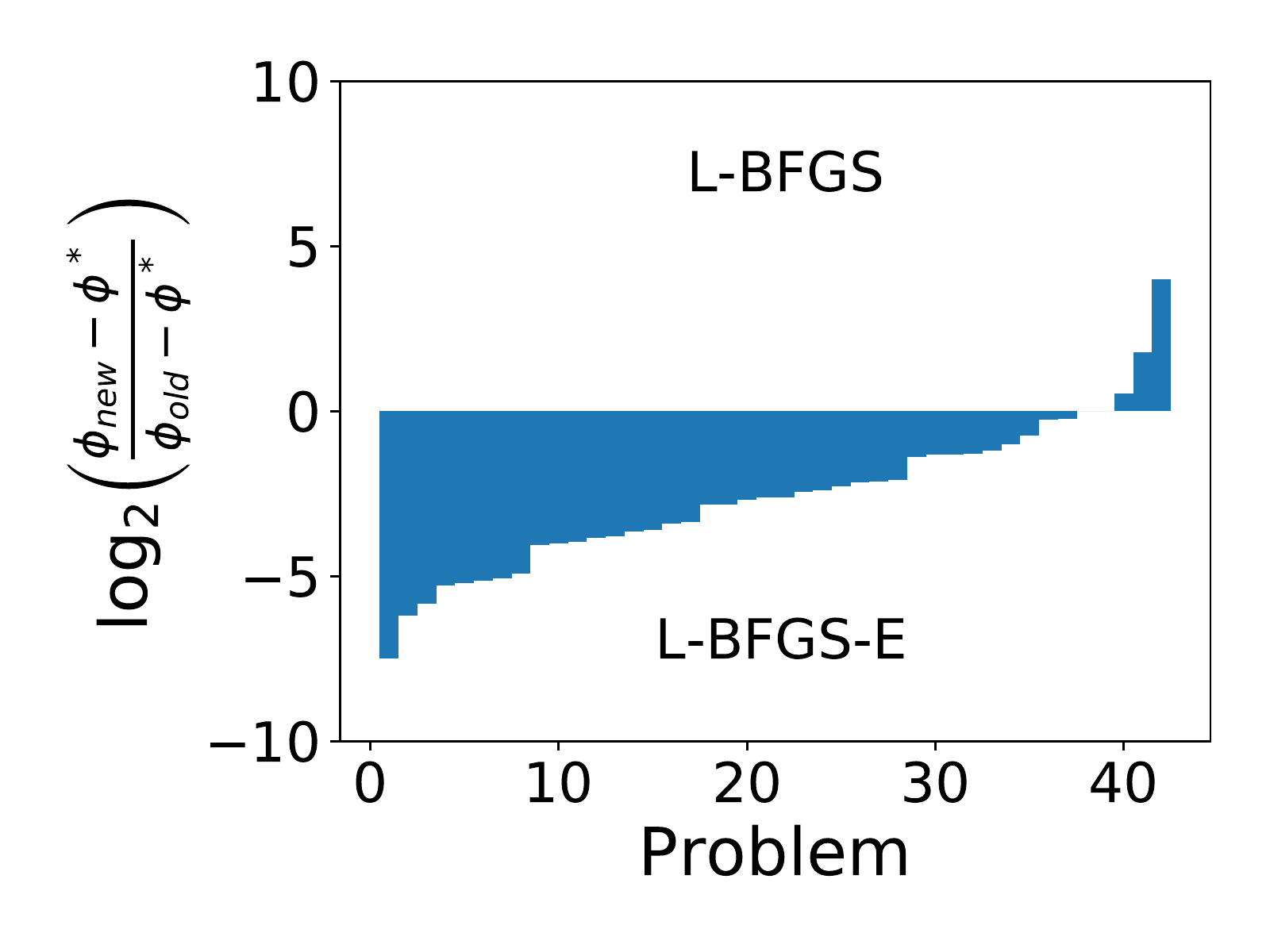}
    \caption{Morales profiles for the optimality gap $\phi(x_k) - \phi^*$ across 41 unconstrained CUTEst problems with $\xi_f = 10^{-3}$ and $\xi_g = 10^{-3}$. Results are averaged over 5 runs. The left figure compares BFGS against BFGS-E while the right figure compares L-BFGS against L-BFGS-E.}
    \label{fig:morales obj 1}
\end{figure}


\begin{figure}[htp]
    \centering
    \includegraphics[width=0.45\linewidth]{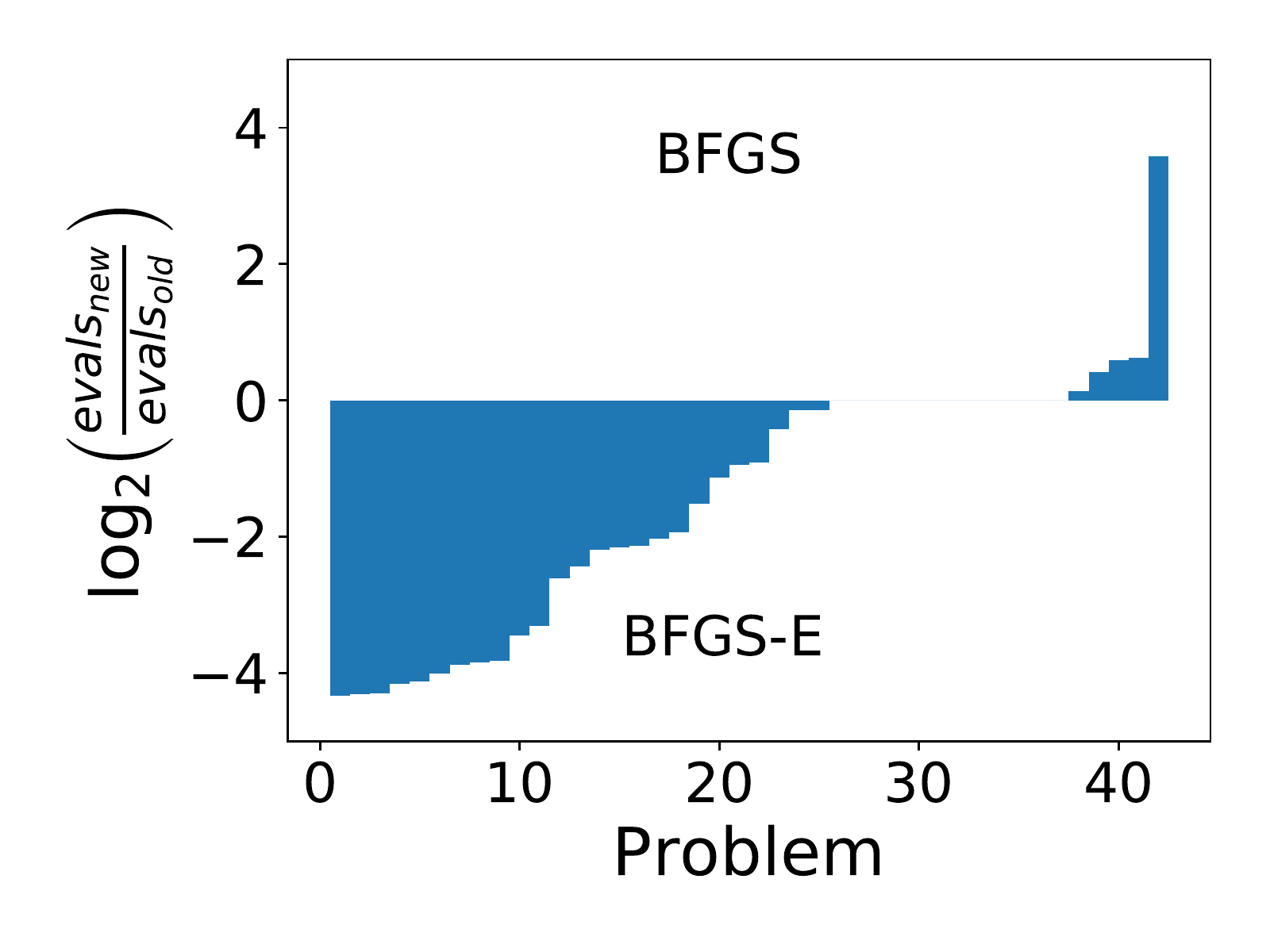}
    \includegraphics[width=0.45\linewidth]{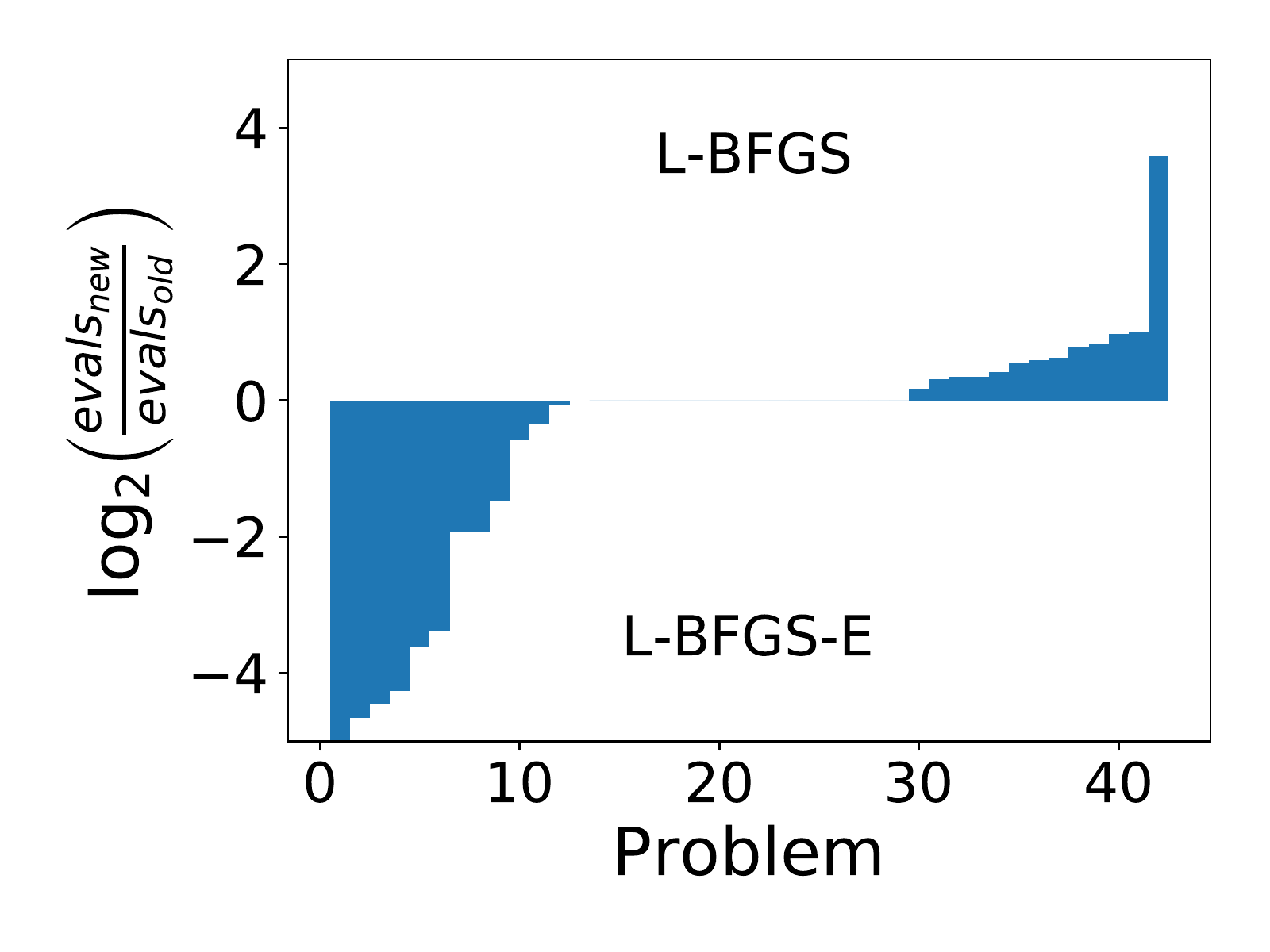}
    \caption{Morales profiles for the total number of gradient evaluations to achieve \eqref{twoterm} across 41 unconstrained CUTEst problems with $\xi_f = 10^{-3}$ and $\xi_g = 10^{-3}$. Results are averaged over 5 runs. The left figure compares BFGS against BFGS-E while the right figure compares L-BFGS against L-BFGS-E.}
    \label{fig:morales grad evals 1}
\end{figure}

Figure~\ref{fig:morales obj 1} thus compares the (long term) ability of the methods to achieve high accuracy in the function value, whereas Figure~\ref{fig:morales grad evals 1} measures the short-term cost in terms of gradient evaluations to achieve the noise level in the function or gradient. These results suggest that the noise tolerant methods often provide a real improvement in the solution of certain classes of optimization problems with noisy function and gradient evaluations.

\section{Final Remarks}

Although quasi-Newton methods are widely used in practice, the question of how to make BFGS and L-BFGS tolerant to errors in the function and gradient has not received sufficient attention in the literature. 


This paper makes two contributions. It introduces the noise control condition \eqref{eq:noise}, which can be used to determine when to skip a quasi-Newton update or adaptively lengthen the interval from which gradient differences can be employed reliably. Our proposed BFGS-E and L-BFGS-E methods utilize the latter and enjoy convergence guarantees to a neighborhood of the solution for strongly convex functions.

The second contribution of the paper is to show that the lengthening procedure based on condition \eqref{eq:noise} is successful in practice, and thus transforms the theoretical algorithm proposed in \cite{xie2020analysis} into a robust and practical procedure.  
Our numerical experiments show that quasi-Newton updating remains stable after the algorithm has reached the region where errors dominate, and this allows the noise tolerant methods to reach higher accuracy in the solution. 
Our testing also shows that the proposed algorithms are not more expensive than the standard BFGS and L-BFGS methods in the region where the latter two methods operate reliably. Once the iterates reach a neighborhood where BFGS updating is corrupted and the iteration stalls, the new algorithms invoke the lengthening procedure that typically requires $2-4$ gradient evaluations per iteration. We also tested an update skipping strategy based on the noise tolerant condition. We found that, although update skipping can be very efficient when applied to easy problems with uniform noise, the noise tolerant methods are more efficient when applied to harder problems or problems with oscillating noise.


We have made both implementations of the  BFGS-E and L-BFGS-E algorithms available on GitHub\footnote{\hyperlink{https://github.com/hjmshi/noise-tolerant-bfgs}{https://github.com/hjmshi/noise-tolerant-bfgs}}.

\section*{Acknowledgements}

We thank David Bindel, Jorge Mor\'e, Ping Tak Peter Tang, and Andreas Waechter for their valuable input on this work and for their suggestions of problems with computational noise. We also thank Shigeng Sun and Melody Qiming Xuan for their feedback on the manuscript.

\bibliographystyle{siamplain}
\bibliography{references}

\end{document}


\maketitle

\section{A detailed example}

Here we include some equations and theorem-like environments to show
how these are labeled in a supplement and can be referenced from the
main text.
Consider the following equation:
\begin{equation}
  \label{eq:suppa}
  a^2 + b^2 = c^2.
\end{equation}
You can also reference equations such as \cref{eq:matrices,eq:bb} 
from the main article in this supplement.

\lipsum[100-101]

\begin{theorem}
  An example theorem.
\end{theorem}

\lipsum[102]
 
\begin{lemma}
  An example lemma.
\end{lemma}

\lipsum[103-105]

Here is an example citation: \cite{KoMa14}.

\section[Proof of Thm]{Proof of \cref{thm:bigthm}}
\label{sec:proof}
\lipsum[106-112]

\section{Additional experimental results}
\Cref{tab:foo} shows additional
supporting evidence. 

\begin{table}[htbp]
{\footnotesize
  \caption{Example table}  \label{tab:foo}
\begin{center}
  \begin{tabular}{|c|c|c|} \hline
   Species & \bf Mean & \bf Std.~Dev. \\ \hline
    1 & 3.4 & 1.2 \\
    2 & 5.4 & 0.6 \\ \hline
  \end{tabular}
\end{center}
}
\end{table}

\bibliographystyle{siamplain}
\bibliography{references}